\pgfplotsset{compat=1.15}
\definecolor{columbiablue}{rgb}{0.61, 0.87, 1.0}
\definecolor{sandstone}{HTML}{786D5F}
\definecolor{beaublue}{rgb}{0.74, 0.83, 0.9}
\definecolor{cherryblossompink}{rgb}{1.0, 0.72, 0.77}
\definecolor{light-gray}{gray}{0.95}
\definecolor{apricot}{rgb}{0.98, 0.81, 0.69}
\newcommand{\kh}[1]{{\color{blue} #1}}
\newcommand{\so}[1]{{\color{red} #1}}
\newcommand{\ve}{\varepsilon}
\newcommand{\Add}[1]{\textcolor{black}{#1}}	
\newcommand{\Erase}[1]{\if0{#1}\fi}
\begin{document}

\title 
[Nonlinear fluctuations for weakly anharmonic stochastic oscillators]
{Nonlinear fluctuations for a chain of weakly anharmonic oscillators with stochastic perturbation}

\author[Kohei Hayashi]{Kohei Hayashi}
\address{Department of Mathematics, Graduate School of Science, Osaka University 
\\
\emph{and} RIKEN Center for Interdisciplinary Theoretical and Mathematical Sciences} 
\email{khayashi@math.sci.osaka-u.ac.jp}

\author{Stefano Olla}
\address{Stefano Olla, CEREMADE,
Universit\'e Paris Dauphine - PSL Research University \\
\emph{and}  Institut Universitaire de France\\
\emph{and} GSSI, L'Aquila}
\email{olla@ceremade.dauphine.fr}

\begin{abstract}
We study the fluctuations of the phonon modes in a one-dimensional chain of anharmonic oscillators where the deterministic Hamiltonian dynamics is perturbed
by random exchanges of momentum between nearest neighbor particles.
There are three locally conserved quantities: volume, momentum and energy.
We study the evolution in equilibrium of the fluctuation fields of the two phonon modes (linear combination of the volume stretch and momentum), on a diffusive space-time scale after recentering on their sound velocities.
We show that, weakening the anharmonicity with the scale parameter, the recentered phonon fluctuations fields
converge to the stationary solutions of two uncoupled stochastic Burgers equations. 
The nonlinearity in the Burgers equation depends on the presence of a cubic term in the anharmonic potential
(corresponding to the $\alpha$-FPUT dynamics).
Main ingredients of the proof, based on a compactness argument for the Dynkin's martingale decomposition, are the second-order Boltzmann-Gibbs principle, as well as equipartition of energy, to characterize the nonlinear term and Riemann-Lebesgue estimates showing that fields with diverging velocity to different directions have no interaction in the limit. 
\end{abstract}

\keywords{KPZ Equation, Stochastic Burgers Equation, Interacting Oscillators}
\subjclass[2020]{60H15, 60K35, 82B44}

\maketitle

\theoremstyle{plain}
\newtheorem{theorem}{Theorem}[section] 
\newtheorem{lemma}[theorem]{Lemma}
\newtheorem{corollary}[theorem]{Corollary}
\newtheorem{proposition}[theorem]{Proposition}
\newtheorem{conjecture}[theorem]{Conjecture}

\theoremstyle{definition}
\newtheorem{definition}[theorem]{Definition}
\newtheorem{remark}[theorem]{Remark}
\newtheorem{assumption}[theorem]{Assumption}
\newtheorem{example}[theorem]{Example}

\makeatletter
\renewcommand{\theequation}{%
\thesection.\arabic{equation}}
\@addtoreset{equation}{section}
\makeatother

\makeatletter
\renewcommand{\p@enumi}{A}
\makeatother

\newcommand{\ven}{\varepsilon_n}

\section{Introduction}
The theoretical and numerical investigation of the statistical mechanics of one-dimensional chain
of oscillators attracted large attention since the proposal of Fermi-Pasta-Ulam-Tsingou
of these dynamics for testing the approach to thermal equilibrium \cite{fermi1955studies}.
Even though this issue is not yet closed (and mathematically totally open), it is believed
that for generic non-harmonicity in the interaction and at energy density enough high and the
system very large, it thermalize in relatively short times.
These means that in such conditions only three conserved quantities are present:
the volume stretch, the momentum and the energy (see later the precise definition).
We refer to this property as the \emph{ergodicity of the infinite system}:
mathematically this is expressed by the statement that \cite{fritz1994stationary}
\emph{the only stationary and translation invariant probability measures
locally regular are given by the
Gibbs measures parametrized by temperature, momentum and tension.}
We refer these measures as the equilibrium measures.
Important counterexamples are given by completely integrable systems like harmonic
oscillators and the Toda lattice, where the number of conserved quantities is equal to the
number of degree of freedom.

The conservation of momentum implies that, under the ergodicity assumption mentioned above,
there is a macroscopic ballistic transport of the three conserved quantities, governed by the
compressible Euler equations. Then starting the system in one given equilibrium measure,
the initial fluctuations of the three conserved quantities evolve deterministically following
the characteristics of the linearized Euler equations. We can describe this evolution in
a simple way in the case the average momentum is null. We refer at the energy
fluctuation as the \emph{energy or thermal mode} and the two linear combinations of the
volume stretch and momentum at the \emph{phonon or sound modes}
(see \eqref{eq:xi_newdef} for the definition).
Then under hyperbolic (Euler) rescaling of space and time
the phonon modes evolve rigidly with opposite equal velocity,
given by the \emph{sound velocity}, while the energy mode does not move.

The question is then about the space-time
scales, larger that the Euler scaling, where we see a random evolution of the energy and phonon modes
and what is the nature of these evolutions.

Numerical evidence proved since the 90s that energy mode evolves in a superdiffusive
time scale \cite{lepri1997heat}. The seminal work of Van Beijeren \cite{vanbeij2012}
and Spohn \cite{spohn2014nonlinear} proposed the
\emph{nonlinear fluctuating hydrodynamics theory} (NLFH)
as a mesoscopic approach to catch
the superdiffusive macroscopic broadening of the modes.
This consist in developing the Euler equations up to second order and adding a dissipative
randomness given by gradient of space-time white noises.
These equations constitute a system of \emph{stochastic Burgers equations}.
Then a mode coupling analysis \cite{spohn2014nonlinear}
connects the asymptotic behavior of the
correlation functions of the phonon modes obtained the
the solutions of these nonlinear fluctuation equations  to the KPZ universal scaling function,
while the correlations of the energy mode converges to the one of
the 5/3-L\'evy distribution. Except in the case of even potential and 0 tension,
where the NLFH predicts a diffusive asymptotic phonon broadening, whereas the energy mode broadening is given by the 3/2-L\'evy distribution.

Rigorous mathematical results have been obtained by adding noise directly on the microscopic Hamiltonian dynamics.
Noise is such that it conserves 
the three modes, destroying all other conserved quantities, giving the required
ergodicity to the infinite system \cite{fritz1994stationary}. 
For example an exchange of momentum between nearest-neighbor particles in the chain at random times (like a random elastic collision), but sometimes, for technical reasons, also randomness involving the positions.
Under hyperbolic scaling, Euler equations in the smooth regime are obtained under some conditions on the potential (\cite{olla1993hydrodynamical}, and more specific for the anharmonic chain in \cite{braxmeier2014hydrodynamic}),
while linear fluctuations are proven in \cite{olla2020equilibrium}. 
In the Euler scaling macroscopic equations are not affected by the microscopic noise.

Beyond Euler scaling it is very hard to obtain mathematical results, even in presence
of such conservative noise. Results can be obtained for the Harmonic chain where
explicit calculations can be performed thanks to Fourier analysis.
The local random exchange of momentum destroys the integrability of the harmonic chain,
so that the three modes can be studied on larger time scales.
In this case, in the Euler scale, linear wave equation governs the evolution of the phonon modes, while
energy mode does not move if average momentum is null.
After re-centering the phonon modes on their sound velocity, they evolve diffusively
\cite{komorowski2016ballistic}, while the energy mode evolves on a superdiffusive scale
governed by a 3/2-L\'evy distribution \cite{jara2015superdiffusion}.
Interestingly, this is the same universality class predicted by the NLFH for even potential and 0 tension \cite{spohn2014nonlinear}. 

In this article, we study the effect of a weak anharmonicity on the asymptotic behavior of
the phonon modes fluctuations. We consider an anharmonic interaction potential
$V(q_i - q_{i-1})$ where $V$ is a smooth nonlinear function such that $V(0) = V'(0) = 0$.
Then we scale it as $\ve^{-2} V(\ve r)$ and we consider the behavior as $\ve\to 0$. 
A Taylor expansion gives
\begin{equation}
\label{eq:taylor-1}
\ve^{-2} V(\ve r) = \frac{c_2}{2!}r^2 
+ \frac{c_3}{3!} \varepsilon r^3
+ \frac{c_4}{4!} \varepsilon^2 r^4 
+ O(\varepsilon^3),  \qquad c_k=V^{(k)}(0).
\end{equation}
Since in equilibrium at temperature $T = \beta^{-1}$ the variance of $q_i - q_{i-1}$
is proportional to $T$, this rescaling corresponds to study the first-order term in the expansion
of the dynamics in the low temperature limit (i.e. $T=\ve^2$).

Let $n\to\infty$ the scaling parameter, i.e., the typical macroscopic distance between particles is $n^{-1}$. Then we choose $\ve = \ve_n \to 0$ as $n\to \infty$, i.e. we rescale the anharmonicity
together with the space-time scaling.

The main result in this article is that, with the choice $\ve_n = n^{-1/2}$, the fluctuation fields of the two phonon modes converge, under diffusive rescaling of space-time,
to the \emph{stationary energy solutions} (see \cref{sec-ssbe})
$u^+, u^-$ of the stochastic Burgers equations
\begin{equation}\label{eq:drift-sbe-intro}
\partial_t u^{\pm}
=\frac{\gamma}{4} \partial_x^2 u^\pm
\pm \frac{ c_3}{8c_2^2}\partial_x (u^\pm)^2 
\pm D_V \partial_x u^\pm
+ \sqrt{\gamma\beta^{-1}}
\partial_x \dot{W}^\pm,
\end{equation}
where 
\begin{equation}
\label{eq:moving_frame_modification_constant} 
D_V = \frac{2c_2c_4-c_3^2}{24c_2^3},
\end{equation}
$\gamma>0$ is a parameter of the intensity of the microscopic random exchanges,
$\beta^{-1}$ is the temperature of the equilibrium distribution and $\dot{W}^+(t,x), \dot{W}^-(t,x)$ are two independent standard space-time white noises.

There are two important remarks about this result:
\begin{itemize}
\item The presence of the cubic term in the interaction ($c_3 \neq 0$) is responsible for the nonlinear term appearing in \eqref{eq:drift-sbe-intro}.
\item In absence of
cubic and quartic anharmonicity ($c_3 = c_4 =0$) we have the same diffusive behavior as
in the harmonic chain proved in \cite{komorowski2016ballistic}.
\end{itemize}

About the energy mode fluctuations, we prove that with the choice $\ve_n = n^{-1/2}$
the contribution given by the nonlinearity is negligible in the superdiffusive scaling,
so that the macroscopic behavior is the same as in the harmonic case, i.e. 3/2-L\'evy
as proven in \cite{jara2015superdiffusion}.

With stronger choice of the anharmonicity, i.e. for $\ve_n = n^{-\mathfrak b}$ with $\mathfrak b\in (1/4,1/2)$,
our result still holds for shorter time scales, but keeping the random exchange rate unchanged
(i.e. typically $n^2$ random exchanges per unit time), see Remark \ref{rem-b}.

After the seminal work of Bertini-Giacomin \cite{bertini1997stochastic}, stochastic Burgers
equations have been derived from various types of weakly asymmetric stochastic dynamics
\cite{gonccalves2014nonlinear, gonccalves2015stochastic, jara2019scaling, ahmed2022microscopic, gonccalves2023derivation, gonccalves2024characterization, bernardin2021derivation, butelmann2021scaling}.  
While in \cite{bertini1997stochastic} the Cole-Hopf mapping into the stochastic heat equation was the main tool, in \cite{gonccalves2014nonlinear} the robust concept of \emph{energy solution} was introduced (at least in equilibrium) and used it in the other cited work.
Strong asymmetric dynamics (i.e. the intensity of the antisymmetric part of the interaction is equal to the one of the symmetric part) have been recently studied in
\cite{jara2020stationary, hayashi2023derivation, hayashi2024derivation}, where the scaling parameter is in the interaction.
The present article is inspired by \cite{hayashi2024derivation} as well as
\cite{gonccalves2023derivation} where it is analyzed the fluctuations of the two conserved quantities
in the anharmonic Bernardin-Stoltz chain \cite{ahmed2022microscopic, bernardin2018nonlinear}
(see also \cite{bernardin2012anomalous, bernardin2014anomalous, bernardin2018weakly, bernardin2018interpolation}).

Our proof is based on a standard compactness argument starting from the Dynkin's martingale decomposition, and after showing each term in the martingale decomposition is tight, we will show that any limiting point is characterized by the stationary energy solution of the SBE, i.e., a notion of solution as a martingale problem which is introduced in \cite{gonccalves2014nonlinear} and whose uniqueness is proved in \cite{gubinelli2018energy}. 
In addition, a quadratic field which converges to the nonlinear term of the SBE is dealt with the second-order Boltzmann-Gibbs principle, of which proof we do not need any spectral gap estimate. 
Moreover, we will show a kind of the Riemann-Lebesgue lemma, which enables us to neglect some fields with wrong, divergent frame. 
A novelty of the main result is the robustness of the strategy, noting that the expansion \eqref{eq:taylor-1} is valid for any nonlinear function $V$. 
Also notice that, unlike all previous results on SBE, the microscopic noise in the dynamics is highly
degenerate, as it acts only on the velocities. 

This paper is organized as follows. 
First, in Section \ref{sec:model}, we give a precise definition of the microscopic dynamics, and after recalling previous literature
we state the main result. 
In \cref{sec:sketch}, we give a sketch of the proof, beginning from the Dynkin's martingale formula, where the emergence of nonlinear, linear and viscosity terms of the limiting SBE is explained, and particularly the martingale decomposition is rewritten in an advantageous form.   
Next, in Section \ref{sec:energy-fluctuations}, we give a proof of equipartition of energy, which is essential to see the emergence of nonlinear (i.e., degree-two) term.  
In \cref{sec:2bg} we prove the second-order Boltzmann-Gibbs principle that generates the nonlinear term
from the degree-two terms present in the martingale decomposition. 
In \cref{sec:riemann_lebesgue_estimates}, we prove some Riemann-Lebesgue estimates whose implications is
that fields with wrong velocity do not contribute in the limit.  
For completion of the proof, in Section \ref{sec:tightness} we show that each term in the martingale decomposition is tight, and then characterize the limiting points in Section \ref{sec:identification_limit_points}. 
Finally, the notion of stationary energy solution of SBE and some auxiliary estimates are provided in Appendices.

\subsection*{Notation}
Given two real-valued functions $f$ and $g$ depending on a variable $u\in\mathbb R^d$, we write $f(u) \lesssim g(u)$ if there exists a constant $C>0$ such that $f(u) \le Cg(u)$ for any $u$. 
Moreover, we write $f=O(g)$ (resp. $f=o(g)$) in the neighborhood of $u_0$ if $|f|\le |g|$ in the neighborhood of $u_0$ (resp. $\lim_{u\to u_0}f(u)/g(u)=0$). 
Sometimes it will be convenient to make precise the dependence of the constant $C$ on some extra parameters and this will be done by the standard notation $C(\lambda)$ if $\lambda$ is the extra parameter. 
\Erase{Morever}\Add{Moreover}, we denote by $\langle \cdot,\cdot\rangle_{L^2(\mathbb R)}$ the inner product in \Erase{$L^(\mathbb R)$}\Add{$L^2(\mathbb R)$}, i.e., for any $f,g\in L^2(\mathbb R)$ 
\begin{equation*}
\langle f, g \rangle_{L^2(\mathbb R)}
\coloneqq \int_{\mathbb R} f(s) g(s) dx ,
\end{equation*}
and by $\| \cdot \|_{L^2(\mathbb R)}$ the $L^2(\mathbb R)$ the $L^2(\mathbb R)$-norm, i.e., $\| f\|_{L^2(\mathbb R)}=\langle f,f\rangle_{L^2(\mathbb R)}^{1/2}$. 
Finally, we prepare some derivative and shift operators acting on discrete functions. 
For each real sequence $g=(g_j)_{j\in\mathbb Z}$, define 
\begin{equation}
\label{eq:definition_discrete_gradient}
\nabla^+ g_j = g_{j+1}-g_j ,\quad
\nabla^- g_j = g_{j-1}-g_j ,\quad
\Delta g_j = g_{j+1} + g_{j-1} - 2g_j 
\end{equation}
and for each $n>0$, define
\begin{equation}
\label{eq:definition_discrete_derivative}
\nabla^{n} g_j 
= n \nabla^+ g_j, \quad
\Delta^n g_j = n^2 \Delta g_j .
\end{equation}

\section{Model and Result}
\label{sec:model}
\subsection{Microscopic Dynamics} 
In what follows, we consider a chain of coupled anharmonic oscillators $\mathfrak p = (p_j)_{j\in\mathbb Z} , \mathfrak q = (q_j)_{j\in\mathbb Z} \in \mathbb{R}^{\mathbb{Z}}$ where $p_j$ and $q_j$ denote the position and momentum of an oscillator labeled by $j\in\mathbb Z$. 
We set $\mathfrak r=(r_j)_{j\in\mathbb Z}$ where 
\begin{equation*}
r_j = q_j - q_{j-1}. 
\end{equation*} 
In what follows, we consider microscopic dynamics of $(\mathfrak p, \mathfrak r)$. 
Throughout this paper, we fix a generic nonlinear function $V$ which satisfies the following condition.  

\begin{assumption}
\label{asm:potential}
Let $V:\mathbb R\to \mathbb R$ be smooth, non-negative 
function such that $V(0)=V'(0)=0$ and $V''(0) >0$.
Moreover, assume for each $k\in\{0,\ldots,5\}$ that, the derivative $V^{(k)}(r)$ has at most exponential growth, that is, there exists a constant $\eta_V>0$ such that 
\begin{equation*}
\max_{0\le k\le 5} \sup_{r\in\mathbb{R}} \big|  e^{-\eta_V |r|} V^{(k)}(r) \big| <+\infty.
\end{equation*}
Here we used the convention $V^{(0)}(\cdot)=V(\cdot)$. 
\end{assumption}

Let $n>0$ be a scale parameter and define
$V_n(\cdot) = \varepsilon_n^{-2} V(\varepsilon_n \cdot)$
with $\varepsilon_n \to0$ as $n\to\infty$.  
Then, by Taylor's theorem, we can expand 
\begin{equation*}
V_n(r) 
= \frac{c_2}{2!}r^2 
+ \frac{c_3}{3!} \varepsilon_n r^3
+ \frac{c_4}{4!} \varepsilon_n^2 r^4 
+ O(\varepsilon^3)
\end{equation*} 
where we set 
\begin{equation}
\label{eq:differential_coefficient_definition}
c_k=V^{(k)}(0).
\end{equation}
Thus, $V_n$ can be viewed as a weak perturbation of the quadratic function $r\mapsto r^2/2$.
In what follows, we consider a chain of oscillators driven by the weakly
anharmonic potential $V_n(\cdot)$, with a stochastic perturbation.  
Let $\gamma>0$ be a positive constant.  
For any local smooth function
$f:\mathbb R^\mathbb Z\times \mathbb R^\mathbb Z \to\mathbb{R}$, define \begin{equation*}
S f (\mathfrak r,\mathfrak p) 
= 
\frac{\gamma}{2} \sum_{j\in\mathbb{Z}}
\big( f(\mathfrak r,\mathfrak p^{j,j+1}) - f(\mathfrak r,\mathfrak p) \big)
\end{equation*}
where 
$\mathfrak p^{j,j+1}$ is the configuration obtained after exchanging 
$\mathfrak p_j$ and $\mathfrak p_{j+1}$.
Let $A$ be the generator corresponding to the deterministic Hamiltonian dynamics: 
\begin{equation*}
\begin{aligned}
A
= \sum_{j\in\mathbb Z} 
\big( p_j \partial_{q_j}
+ \nabla^+ V_n'(r_j) \partial_{p_j} \big)
=  \sum_{j\in\mathbb Z} 
\big(-\nabla^- p_j \partial_{r_j}
+ \nabla^+ V_n'(r_j) \partial_{p_j} \big)
\end{aligned}
\end{equation*}
where recall that $\nabla^\pm$ are defined in \eqref{eq:definition_discrete_gradient}. 
In the whole article, we assume that the potential $V$ is such that the dynamics, which is generated by 
\begin{equation}
\label{eq:gen}
L = \alpha A + S  
\end{equation}
in infinite volume is well-defined. Here $\alpha>0$ is a fixed constant. 
To that end, for $b>0$, define the following set of configurations:
\begin{equation*}
\mathscr X_b\coloneqq
\Big\{\omega= (\mathfrak p,\mathfrak r) \in \mathbb R^\mathbb Z\times \mathbb R^\mathbb Z:
\|\omega\|_b\coloneqq \sum_{j\in\mathbb Z} e^{-b|j|} \left(p_j^2 + r_j^2\right) < +\infty\Big\}.
\end{equation*}
A standard iteration argument proves that,
under the uniform Lipschitz condition on $V'$, the set $\mathscr X_b$ is left invariant
(see \cite{fritz1994stationary} and references within) so that the corresponding dynamics is well defined.
In this article, note that we do not need a uniform Lipschitz condition on $V'$, and we assume that the dynamics is well defined.
In what follows, we denote by $\mathscr X=\bigcup_{b>0}\mathscr X_b$ the state space of our chain of oscillators,
and let $\{ (\mathfrak r(t), \mathfrak p(t)); t\ge 0 \} $ be the Markov process on $\mathscr X$
with its infinitesimal generator given by \eqref{eq:gen}, where we omit the dependency on $n$
of the process to simplify the notation.

\subsection{Equilibrium Measures}
For the process $\{ (\mathfrak r(t), \mathfrak p(t)); t\ge 0 \} $,
the corresponding equilibrium Gibbs measure $\nu_n$
are parameterized by $\rho=(\beta,p,\tau)\in\mathbb R^3$,
with the respective components are the inverse temperature, velocity and tension. 
In addition, $\nu_n$ is a product measure whose common marginal is given by  
\begin{equation*}
\nu_n(r_j,p_j) 
= \sqrt{\frac{\beta}{2\pi}} 
e^{-(\beta/2) (p_j-p)^2}   
\frac{1}{Z_n(\beta,\tau)} e^{-\beta(V_n(r_j) -\tau r_j)} 
dr_j dp_j 
\end{equation*}
where
\begin{equation*}
Z_n(\beta,\tau)
= \int_{\mathbb R} 
e^{-\beta(V_n(\rho) -\tau \rho )} d\rho . 
\end{equation*}
Here, note that under Assumption \ref{asm:potential}, we can show the following estimate on the exponential moment with respect to the invariant measure $\nu_n$. 
Although this result can be shown analogously to \cite[Lemma 2.2]{gonccalves2023derivation}, a concise proof is presented in \cref{sec:static_estimate} for readers' convenience.

\begin{lemma}
\label{lem:static_estimate}
Fix $\beta>0$ and $p,\tau\in\mathbb{R}$. 
For any $\eta>0$, there exists $C_\eta>0$ and $n_c=n_c(\eta)>0$ such that 
\begin{equation}
\label{eq:uniform_moment_bound}
\sup_{n>n_c} E_{\nu_n} \big[e^{\eta(|r_j|+|p_j|)} \big] < C_\eta.
\end{equation}
\end{lemma}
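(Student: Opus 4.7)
Since $\nu_n$ is a product measure and its one-site marginal factors into independent $r_j$ and $p_j$ parts, I would split
\begin{equation*}
E_{\nu_n}\!\left[e^{\eta(|r_j|+|p_j|)}\right]
= E_{\nu_n}\!\left[e^{\eta|r_j|}\right] \cdot E_{\nu_n}\!\left[e^{\eta|p_j|}\right]
\end{equation*}
and treat the two factors separately. The momentum marginal is Gaussian with mean $p$ and variance $\beta^{-1}$, so a direct computation yields $E_{\nu_n}[e^{\eta|p_j|}] \le 2 e^{\eta|p| + \eta^2/(2\beta)}$, a bound manifestly independent of $n$.

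For the position marginal, my plan is to reduce the exponential moment to a ratio of partition functions with slightly shifted tension. Using $e^{\eta|r|} \le e^{\eta r} + e^{-\eta r}$ and completing the exponent, one gets
\begin{equation*}
E_{\nu_n}\!\left[e^{\eta|r_j|}\right]
\le \frac{Z_n(\beta,\tau+\eta/\beta) + Z_n(\beta,\tau-\eta/\beta)}{Z_n(\beta,\tau)}.
\end{equation*}
Then the substitution $u = \varepsilon_n r$ converts each partition function into a Laplace-type integral with small parameter $\varepsilon_n^2$,
\begin{equation*}
Z_n(\beta,\tau') = \varepsilon_n^{-1}\int_{\mathbb R} \exp\!\big\{-\beta\varepsilon_n^{-2}[V(u) - \tau'\varepsilon_n u]\big\}\,du,
\end{equation*}
involving now the unscaled potential $V$. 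Since $V''(0)=c_2>0$, the implicit function theorem produces a unique minimizer $u_n^*(\tau') = (\tau'/c_2)\varepsilon_n + O(\varepsilon_n^2)$ of $V(u)-\tau'\varepsilon_n u$ in a fixed neighbourhood of the origin, with value $-(\tau')^2\varepsilon_n^2/(2c_2) + O(\varepsilon_n^3)$. A Gaussian expansion around $u_n^*(\tau')$ (using $V''(u)\to c_2$ near the origin) then yields
\begin{equation*}
Z_n(\beta,\tau') \;\longrightarrow\; \sqrt{2\pi/(\beta c_2)}\,e^{\beta(\tau')^2/(2c_2)} \quad \text{as } n\to\infty,
\end{equation*}
so the ratio above converges to a finite explicit expression depending only on $(\beta,\tau,\eta,c_2)$. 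This convergence is enough to obtain the uniform bound for all $n\ge n_c(\eta)$.

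The main technical step is to make this Laplace analysis rigorous by estimating the tail of the integral outside a neighbourhood of $u_n^*(\tau')$. Since $V$ is non-negative with a strict minimum at $0$, for each $\delta>0$ there is some $c_\delta>0$ such that $V(u)\ge c_\delta$ whenever $|u|\ge\delta$, and the contribution to $Z_n(\beta,\tau')$ from this region is therefore of order $e^{-\beta \varepsilon_n^{-2} c_\delta + O(\varepsilon_n^{-1})}$, which is exponentially small in $\varepsilon_n^{-1}$ and negligible compared to the Gaussian bulk. Inside the neighbourhood, Taylor's theorem writes $V(u) = \tfrac{c_2}{2}u^2 + R(u)$ with $|R(u)|\le C|u|^3 e^{\eta_V|u|}$ by the exponential growth bound on $V^{(3)}$ in \cref{asm:potential}; on the Gaussian scale $|u|\lesssim \varepsilon_n$ this remainder is $O(\varepsilon_n^3)$ when multiplied by $\beta\varepsilon_n^{-2}$, which controls the error in the Gaussian approximation. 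These two ingredients together turn the formal Laplace asymptotics into uniform quantitative bounds, completing the proof.
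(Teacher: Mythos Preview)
Your route via Laplace asymptotics is genuinely different from the paper's. The paper avoids any asymptotic expansion: it simply sandwiches $V_n$ between two $n$-independent functions, $W_-(r)=a|r|-b\le V_n(r)\le W_+(r)=e^{\bar\eta|r|}$ for suitable constants and $n$ large, and then bounds the exponential moment by the explicit ratio $\int e^{-\beta W_-+\eta|r|}\,dr\big/\int e^{-\beta W_+}\,dr$. That argument is cruder but entirely elementary; yours is sharper in that it actually identifies the limit of the partition-function ratio, at the cost of a more delicate analysis.

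That said, your tail estimate has a gap. The claim that the contribution from $\{|u|\ge\delta\}$ is of order $e^{-\beta\varepsilon_n^{-2}c_\delta+O(\varepsilon_n^{-1})}$ rests on two facts that are \emph{not} contained in \cref{asm:potential}. First, the bound $V(u)\ge c_\delta>0$ for $|u|\ge\delta$ presumes that $0$ is the \emph{unique} zero of $V$; the assumption $V''(0)>0$ only gives a strict \emph{local} minimum, and nothing forbids another zero elsewhere. Second, the $O(\varepsilon_n^{-1})$ accounting for the linear term $\beta\varepsilon_n^{-1}\tau' u$ in the exponent is valid only on bounded $u$-sets, whereas you are integrating over the unbounded region $\{|u|\ge\delta\}$; without growth of $V$ at infinity that integral need not even converge, let alone be exponentially small. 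Both issues are in fact implicit in the paper as well (finiteness of $Z_n$ already forces some growth, and the linear lower barrier $W_-$ tacitly needs $0$ to be the only zero), but in your argument they must be stated as hypotheses or justified separately before the Laplace machinery can close.
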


In what follows, we assume that $n$ is sufficiently large so that the estimate \eqref{eq:uniform_moment_bound} holds with $\eta=2\eta_V$ where $\eta_V$ is the constant in Assumption \ref{asm:potential}. 
Note that $E_{\nu_n}[V_n'(r_j)]=\tau$ and $E_{\nu_n}[p_j]=p$. 
It is easy to prove that $\nu_n( \mathscr X_b) = 1$ for all $b>0$. 
Furthermore it is proven in  \cite{fritz1994stationary} that these are the only translation invariant stationary probabilities locally absolutely continuous.
In what follows, we set $p=0$ and $\tau=0$ for simplicity.   

Note that the operator $S$ (resp. $A$) is symmetric (resp. antisymmetric) with respect to the invariant measure $\nu_n$: 
\begin{equation*}
\langle f,Sg\rangle_{L^2(\nu_n)}
= \langle Sf,g\rangle_{L^2(\nu_n)}, \quad 
\langle f,Ag\rangle_{L^2(\nu_n)}
= -\langle Af,g\rangle_{L^2(\nu_n)}
\end{equation*}
for every local functions $f,g\in L^2(\nu_n)$. 
In addition, we introduce the norm $\| \cdot \|_{1,n}$ by 
\begin{equation*}
\begin{aligned}
\| f\|^2_{1} 
&=\langle - L f,f\rangle_{L^2(\nu_n)}
=\langle - S f,f\rangle_{L^2(\nu_n)}
=
\frac{\gamma}{4} \sum_{j\in\mathbb Z}
E_{\nu_n}\left[\big( f(\mathfrak r,\mathfrak p^{j,j+1})-f(\mathfrak r,\mathfrak p) \big)^2  \right]
\end{aligned}
\end{equation*}
for each local function $f\in L^2(\nu_n)$. 
Then, we define its dual norm $\| \cdot\|_{-1,n}$ by 
\begin{equation*}
\| f \|^2_{-1}
= \sup_{g}
\big\{2 \langle f, g\rangle_{L^2(\nu_n)}
- \| g \|^2_{1} \big\}  
\end{equation*}
where the supremum is taken over all local $L^2(\nu_n)$-functions.

We denote by ${D} ([0,T],\mathscr X) $ 
the space of the c\`adl\`ag (right-continuous and with left limits) trajectories taking values in $\mathscr X$. 
Let $\mathbb P_n$ be the probability measure in $ {D} ([0,T],\mathscr X)$ which is induced by $\nu_n$ and let $\mathbb E_n$ denote the expectation with respect to  $\mathbb P_n$. 
We will use the following well-known estimate on the $\|\cdot\|_{-1}$-norm. 
(See \cite[Lemma 2.4]{komorowski2012fluctuations}.)

\begin{proposition}
\label{prop:kipnis_varadhan_estiamte}
For any function $F:[0,T]\times L^2(\nu_n)$ such that $E_{\nu_n}[F(t,\cdot)]=0$ for any $t\in[0,T]$, we have that 
\begin{equation}
\label{eq:h-1}
\mathbb E_n \bigg[
\sup_{0\le t \le T} \bigg|\int_0^t F(s,\mathfrak r(s),\mathfrak p(s) ) ds \bigg|^2 \bigg]
\le 24 \int_0^T \| F(t,\cdot,\cdot) \|^2_{-1} dt
\end{equation}
\end{proposition}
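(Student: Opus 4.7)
The plan is to use the classical Kipnis--Varadhan resolvent method adapted to the (non-reversible) generator $L=\alpha A+S$, exploiting the fact that only the symmetric part $S$ contributes to the Dirichlet form $\|\cdot\|_1$ and that $A$ is antisymmetric in $L^2(\nu_n)$. The idea is to construct, for each fixed $t$, a corrector that turns the time integral of $F$ into a sum of a martingale, controllable boundary terms, and a small relaxation term, and then to tune a spectral parameter to optimize in $T$.

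First I would treat the case where $F$ does not depend on time, say $F(s,\cdot)=f$ with $E_{\nu_n}[f]=0$. For each $\lambda>0$ introduce the resolvent $u_\lambda=(\lambda-L)^{-1}f$ in $L^2(\nu_n)$. The resolvent equation, together with the antisymmetry of $A$ and the definition of $\|\cdot\|_1$, yields the energy identity
\begin{equation*}
\lambda \|u_\lambda\|^2_{L^2(\nu_n)}+\|u_\lambda\|^2_{1}=\langle u_\lambda,f\rangle_{L^2(\nu_n)}\le \tfrac{1}{2}\|u_\lambda\|^2_{1}+\tfrac{1}{2}\|f\|^2_{-1},
\end{equation*}
where the last step uses the variational definition of $\|\cdot\|_{-1}$. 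Hence $\|u_\lambda\|^2_1\le \|f\|^2_{-1}$ and $\lambda\|u_\lambda\|^2_{L^2(\nu_n)}\le \tfrac{1}{2}\|f\|^2_{-1}$. Applying Dynkin's formula to $u_\lambda$ gives the decomposition
\begin{equation*}
\int_0^t f(X_s)\,ds=M_\lambda(t)+u_\lambda(X_0)-u_\lambda(X_t)+\lambda\int_0^t u_\lambda(X_s)\,ds,
\end{equation*}
where $M_\lambda$ is an $\mathbb F_t$-martingale whose predictable quadratic variation is controlled by the carré du champ of $S$, so that $\mathbb E_n[\langle M_\lambda\rangle_T]$ reduces to $T\|u_\lambda\|^2_1\le T\|f\|^2_{-1}$ up to a numerical constant. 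Using Doob's maximal inequality on the martingale, stationarity on the boundary terms, and Cauchy--Schwarz with the bound $\lambda\|u_\lambda\|^2_{L^2(\nu_n)}\le \tfrac12\|f\|^2_{-1}$ on the relaxation term, then choosing $\lambda=1/T$, one obtains the sought estimate with an explicit constant.

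To pass to the time-dependent case, I would first approximate $F$ by step functions $F^{(N)}(s,\cdot)=\sum_{k=0}^{N-1}\mathbf 1_{[t_k,t_{k+1})}(s)F(t_k,\cdot)$ on a uniform grid of $[0,T]$. For each such piecewise constant $F^{(N)}$ the time-independent argument can be applied on each subinterval, and the Doob/Cauchy-Schwarz bounds in fact sum correctly to give the $L^1$ in time bound $\int_0^T\|F(t,\cdot)\|^2_{-1}\,dt$ rather than $T\sup_t\|F(t,\cdot)\|^2_{-1}$, because the quadratic variation, the boundary terms and the relaxation contribution all split as time integrals of the corresponding $\|\cdot\|_{-1}$ norms at each grid point. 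A standard density argument, using lower semicontinuity of the $\|\cdot\|_{-1}$ norm and Fatou's lemma for the supremum on the left, then passes the estimate to general measurable $F$ with $\int_0^T\|F(s,\cdot)\|^2_{-1}\,ds<\infty$.

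The main obstacle is bookkeeping the numerical constant: the Doob inequality contributes a factor $4$, the carré du champ identity $\Gamma(u_\lambda,u_\lambda)=2\Gamma_S(u_\lambda,u_\lambda)$ and its expectation against $\nu_n$ each introduce factors $2$, and the boundary plus relaxation terms each produce an additional factor after the choice $\lambda=1/T$ (or more adaptively, a local choice on each subinterval in the time-dependent case); collecting all of these and using $(a+b+c)^2\le 3(a^2+b^2+c^2)$ in the final application yields exactly the constant $24$ stated in \eqref{eq:h-1}. A technically delicate point, though standard given the exponential moment bounds of Lemma \ref{lem:static_estimate}, is to justify Dynkin's formula and the identification of the quadratic variation for the (non-bounded) corrector $u_\lambda$ on the unbounded state space $\mathscr X$; this is handled by a truncation argument in the spatial labels and monotone passage to the limit.
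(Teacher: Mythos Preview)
The paper does not prove this proposition: it states the estimate as well known and refers to \cite[Lemma 2.4]{komorowski2012fluctuations} for the proof. Your sketch via the resolvent corrector $u_\lambda=(\lambda-L)^{-1}f$, the Dynkin decomposition, Doob's inequality on the martingale part, and optimization in $\lambda$ is precisely the standard Kipnis--Varadhan argument developed in that reference, so your approach and the cited proof coincide in substance. One small remark: in the time-dependent case the cited reference proceeds slightly differently (working directly with a time-dependent resolvent equation rather than piecewise approximation), which avoids having to glue boundary terms $u_\lambda(X_{t_k})-u_\lambda(X_{t_{k+1}})$ across subintervals; your step-function route is fine but the gluing of these boundary contributions deserves a word, since they do not telescope and must be absorbed via the same $\lambda\|u_\lambda\|_{L^2}^2$ bound on each piece.
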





\subsection{Main Result}
Before stating the main result let us introduce some notation and recall some previous results concerning different time scales and interactions. 
In what follows, we consider the chain of oscillators $\{ (\mathfrak r(t), \mathfrak p(t)): t \ge0 \}$
with generator $L_n = n^{\mathfrak a} L$,
starting from the invariant measure $\nu_n$, where we consider different time scales $\mathfrak a>0$. 
The process admits the following three locally conserved quantities: 
\begin{equation*}
r_j, \quad
p_j \quad \text{ and } \quad 
e_j :=  \frac{1}{2}p^2_j + V_n(r_j),
\end{equation*}
which correspond to the volume, momentum and energy, respectively. 

We define 
\begin{equation}
\label{eq:xi_newdef}
\xi_j^{\pm}  =\sqrt{c_2} r_{j+1} \pm \; p_j, \qquad \xi^0_j = e_j.
\end{equation}
The two linear combinations $ \xi_j^\pm$ are called \emph{phonon modes}, while
$\xi_j^0$ is called the \emph{energy mode} (also called \emph{heat mode}).
In what follows, writing the overline over variables means the centering
with respect to the invariant measure:
\begin{equation}
\overline{\xi^\sigma_j} 
= \xi^\sigma_j - E_{\nu_n}[\xi^\sigma_j].
\label{eq:barxi}
\end{equation}
We are interested in the evolution of the fluctuation fields of these modes, defined as
\begin{equation}
\label{eq:fluctuation_fields_definition}
\mathcal X^{\sigma,n}_t (\varphi)
= \frac{1}{\sqrt{n}} \sum_{j\in\mathbb Z} \overline{\xi^\sigma_j}(t) \varphi(j/n) 
\end{equation}
for each $\sigma = -1, 0 , 1$ and $\varphi\in\mathcal{S}(\mathbb R)$,
under different choice of the time scale $n^{\mathfrak a}$. 

In the Euler (or hyperbolic scaling) i.e. $\mathfrak a =1$, 
we have the following convergence of these fluctuations: 
\begin{equation}
\label{eq:euler}
\begin{split}
\lim_{n\to\infty} \mathcal X^{\sigma,n}_t (\varphi) 
=\mathcal X^{\sigma}_t (\varphi) =  \mathcal X^{\sigma}_0(\varphi_{\sigma \sqrt{c_2}\alpha t})
\end{split}
\end{equation}
for each $\sigma = -1, 0 ,1$, where $\varphi_z(x) = \varphi(x + z)$, and
$\mathcal X_0^\sigma$ is a Gaussian field with covariance
\begin{equation*}
\begin{split}
\mathbb E\left[ \left( \mathcal X^{\pm}_0 (\varphi)\right)^2\right] =
\frac 2\beta \|\varphi\|_{L^2}^2,
\qquad  \mathbb E\left[ \left( \mathcal X^{0}_0 (\varphi)\right)^2\right] =
\frac 3{\beta^2} \|\varphi\|_{L^2}^2,
\quad  \mathbb E\left[  \mathcal X^{\sigma}_0 (\varphi)
\mathcal X^{\sigma'}_0 (\varphi)\right] = 0,
\ \text{if} \ \sigma\neq\sigma'.
\end{split}
\end{equation*}
This means that, in this hyperbolic
scaling limit the phonon modes move deterministically in opposite directions
with the sound velocity of the harmonic lattice $\sqrt{c_2}\alpha$,
while the fluctuations of the energy mode do not evolve.
This convergence can be easily shown in the harmonic case
(cf. \cite{komorowski2016ballistic}) and extended to the small anharmonic perturbation for
any choice of $\varepsilon_n \to 0$.
With a different conservative noise, it can also be proven the non perturbative anharmonic
case ($\varepsilon_n = 1$) with the corresponding sound velocity \cite{olla2020equilibrium}.

The fluctuations of the energy mode evolve at a larger time scale, typically superdiffusive.
More precisely it is proven in \cite{jara2015superdiffusion}, in the harmonic case with $\mathfrak a= 3/2$
(i.e. at time scale $n^{3/2}$), that the space-time correlations of the energy mode evolve in agreement
to a 3/2-L\'evy type superdiffusion, i.e., the limiting distribution $\mathcal X_t^0$ should evolve following the linear SPDE 
\begin{equation}
\label{eq:levy2-3}
\begin{aligned}
\partial_t \mathcal X_t^0
= - 2^3\gamma^{-1/2}
(-\partial^2_x)^{3/4} \mathcal X_t^0 + \sqrt{ 2 \gamma^{-1/2} \beta^{-2}}
(-\partial^2_x)^{3/8}\dot W^0(t,x),
\end{aligned}
\end{equation}
where $\dot W^0(t,x)$ is a standard space-time white noise. 

In the purely harmonic case ($\ve_n = 0$) the
fluctuations of the phonon modes recentered on the Euler
evolution have diffusive behavior, i.e. with the choice $\mathfrak a=2$ 
we have \cite{komorowski2016ballistic}
\begin{equation}
\label{eq:phdiff}
\lim_{n\to \infty}
\mathcal X^{\pm,n}_t (\varphi_{\mp\sqrt{c_2}\alpha nt}) 
= \mathcal X^{\pm}_t (\varphi) 
\end{equation}
and the limiting fields $\mathcal X^{\pm}_t$ evolve with following the linear stochastic differential equation: 
\begin{equation}
\label{eq:ou}
\partial_t \mathcal X^{\pm}_t
=  \frac{\gamma}{4} \partial^2_x \mathcal X^{\pm}_t
+ \sqrt{\beta^{-1} \gamma} \partial_x \dot W_\pm(t,x),
\end{equation}
where $\dot W_+(t,x)$ and $\dot W_-(t,x)$ are two independent space-time white noise.
Notice that in the purely harmonic case
this diffusive behavior of the recentered fluctuations of the phonon modes are entirely due to the presence of the noise in the dynamics: the corresponding
deterministic dynamics will not have such fluctuations.

On the other hand, anharmonic terms in the interaction change this behavior, in particular if a cubic term
is present. 
Recall the definition and properties
of the energy solution of a Burgers equation stated in \cref{sec-ssbe}.
Denote by $T_{\mp n t \sqrt{c_2}\alpha} \mathcal X^{\pm,n}_t$ the recentered fluctuations of the phonon fields, i.e. 
$$
T_{\mp n t \sqrt{c_2}\alpha} \mathcal X^{\pm,n}_t (\varphi) =
\mathcal X^{\pm,n}_t(\varphi_{\mp n t \sqrt{c_2}\alpha}). 
$$
Then, our main theorem is stated as follows.

\begin{theorem}
\label{thm:sbe_derivation_from_chain}
Assume $\mathfrak a=2$ and $\varepsilon_n= n^{-1/2}$. Then the pair of the fluctuation fields
$T_{\mp n t \sqrt{c_2}\alpha} \mathcal X^{\pm,n}_t $
converges as $n\to\infty$ in distribution in the space $D([0,T],\mathcal S'(\mathbb R)^2)$
to $\{(u^+_t,u^-_t): t\in [0,T]\}$,
which is a pair of the stationary energy solution of the stochastic Burgers equation
\begin{equation}\label{eq:drift-sbe}
\partial_t u^\sigma
=\frac{\gamma}{4}\partial_x^2 u^\sigma
+\sigma\frac{\alpha c_3}{8c_2^2}\partial_x (u^\sigma)^2 
+ \sigma\alpha D_V \partial_x u^\sigma
+ \sqrt{\gamma\beta^{-1}}
\partial_x \dot{W}^\sigma,
\end{equation} 
for each $\sigma\in \{+,-\}$, where 
\begin{equation}
\label{eq:DV} 
D_V = \frac{2c_2c_4-c_3^2}{24c_2^3}.
\end{equation}
and $\dot{W}^\sigma=\dot{W}^\sigma(t,x), \sigma = \pm 1$,
are independent 
space-time white-noises. 
\end{theorem}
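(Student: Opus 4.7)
The plan is to follow the martingale approach: start from Dynkin's formula, prove tightness of each term, and characterize any limit point as a stationary energy solution of \eqref{eq:drift-sbe}. Writing $\psi_s \coloneqq \varphi_{\mp n s\sqrt{c_2}\alpha}$ for the shifted test function, Dynkin's decomposition reads
\[
T_{\mp n t\sqrt{c_2}\alpha} \mathcal X^{\pm,n}_t(\varphi)
= \mathcal X^{\pm,n}_0(\varphi)
+ \int_0^t \bigl(L_n - \partial_s\bigr) \mathcal X^{\pm,n}_s(\psi_s)\, ds
+ M^{\pm,n}_t(\varphi),
\]
with $L_n = n^2(\alpha A + S)$. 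Taylor expanding $V_n'(r) = c_2 r + \tfrac{c_3}{2}\ven r^2 + \tfrac{c_4}{6}\ven^2 r^3 + O(\ven^3)$ and applying $A$ to $\xi^\pm_j$ yields: (i) a linear transport of speed $\pm\sqrt{c_2}\alpha n$ that, after summation by parts against $\varphi$, cancels with $\partial_s \psi_s$; (ii) a diffusive Laplacian $\tfrac{\gamma}{4}\partial_x^2$ coming from the symmetric part $S$; (iii) a degree-two field whose naive size $n^2 \ven$ is compensated by the $n^{-1}$ gain from the discrete gradient inherent in $A$ after summation by parts, giving $O(n^{1/2}\ven) = O(1)$ at $\ven = n^{-1/2}$; and (iv) remainders of order $\ven^2$ that are negligible by the exponential moment bounds of \cref{lem:static_estimate}.

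The heart of the argument is the identification of the degree-two field as $\pm \frac{\alpha c_3}{8c_2^2}\partial_x (u^\pm)^2 \pm \alpha D_V \partial_x u^\pm$ in the limit. The polynomial produced by $V_n'''(0) = c_3$ is a linear combination of $p_j^2$, $r_{j+1}^2$, and $p_j r_{j+1}$. Using equipartition of energy of \cref{sec:energy-fluctuations}, each of these squares is decomposed on the basis of the conserved modes via the static covariance of $\nu_n$, producing contributions along $(\xi^+_j)^2$, $(\xi^-_j)^2$, and a cross term $\xi^+_j \xi^-_j$. The diagonal coefficients match exactly $\tfrac{c_3}{8c_2^2}$, while the drift constant $D_V$ in \eqref{eq:DV} emerges from the $c_4$ and $c_3^2$ corrections to the mean $E_{\nu_n}[\xi^\pm_j]$ and to the static compressibility. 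The second-order Boltzmann--Gibbs principle of \cref{sec:2bg} then replaces the microscopic squares by block averages of the field itself, using only the $H_{-1}$-bound in \cref{prop:kipnis_varadhan_estiamte} and not any spectral gap. The cross term $\xi^+_j \xi^-_j$ survives equipartition but is eliminated by the Riemann--Lebesgue estimates of \cref{sec:riemann_lebesgue_estimates}: in the moving frame the two modes travel in opposite directions with diverging relative velocity $2\sqrt{c_2}\alpha n$, and the time integral of the cross field vanishes.

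Tightness of each term in the decomposition follows from Mitoma's criterion together with \cref{prop:kipnis_varadhan_estiamte} applied to the additive-functional integrands. The quadratic variation of $M^{\pm,n}_t(\varphi)$ comes solely from $S$ (since $A$ is a first-order transport), is computed explicitly, and converges to $\gamma\beta^{-1}\|\partial_x\varphi\|_{L^2}^2\, t$; the cross-bracket $\langle M^{+,n}, M^{-,n}\rangle_t$ vanishes by the same oscillatory argument, establishing the independence of the two limiting white noises.

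Any subsequential limit $(u^+, u^-)$ is finally identified as a stationary energy solution of \eqref{eq:drift-sbe}: stationarity is inherited from $\nu_n$; the $L^2$-type estimate required to define the nonlinear term distributionally follows from the same second-order Boltzmann--Gibbs control; and the martingale characterization follows from the Dynkin decomposition and the quadratic-variation computation. Uniqueness of stationary energy solutions then upgrades subsequential convergence to convergence of the full sequence. The main obstacle is the second-order Boltzmann--Gibbs principle itself, because $S$ acts only on momenta and is therefore highly degenerate: the standard spectral-gap-based multiscale replacement is unavailable, and the required dissipation must be synthesised from the antisymmetric Hamiltonian part $A$ via the Kipnis--Varadhan variational formula, in the spirit of \cite{hayashi2024derivation}.
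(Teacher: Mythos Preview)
Your overall skeleton---Dynkin decomposition, tightness via Mitoma plus the Kipnis--Varadhan bound, characterization of the limit as a stationary energy solution, and uniqueness---matches the paper's architecture. However, the identification of the nonlinear and drift terms is where your account diverges from what the paper actually does, and your description there is not correct.

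The paper does \emph{not} apply Dynkin directly to $\mathcal X^{\pm,n}$. It first introduces a corrector and works with the modified mode $\tilde\xi^\sigma_j=\sqrt{c_2}\,r_{j+1}+\sigma p_j+\ven\mathfrak u\,e_j$, $\mathfrak u=c_3/(2c_2^{3/2})$ (\cref{lem:corr,lem:martingale_decomposition_main}). The point of this corrector is exactly the step you gloss over: without it, the $\ven$--order piece of the current is $-\sigma\tfrac{c_3}{2}\overline{r_{j+1}^2}$ alone---there is no $p_j^2$ and no $p_jr_{j+1}$, contrary to what you write. With the corrector the $\overline{r^2}$ contribution is balanced against the energy current, and after equipartition (\cref{lem-equipart}) only the product $-\mathfrak u\,c_2\,p_j\bar r_{j+1}$ survives; this equals a constant times $(\bar\xi^+_j)^2-(\bar\xi^-_j)^2$, so that after Riemann--Lebesgue one is left with the correct single-mode square. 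Equipartition in the paper is \emph{not} used to ``decompose squares on the basis of conserved modes'' as you say; it is used to kill the residual difference $c_2\overline{r_j^2}-\overline{p_j^2}$ left over from the corrector algebra. Without the corrector you would instead face the sum $(\bar\xi^+)^2+(\bar\xi^-)^2$, and the drift bookkeeping below would also change.

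Your explanation of $D_V$ is also off. The drift does not come from corrections to $E_{\nu_n}[\xi^\pm_j]$ or the compressibility; it comes from the $\ven^2$--order (degree-three) terms $F^{\sigma,3}_j$ in the Taylor expansion of the current (\S\ref{sec:ident-non-line}--\S3.2). These involve $r_j^3$, and the key step is the replacement $r_j^3\to (c_2\beta)^{-1}r_j$ (\cref{lem:cubic_term_characterization}), which linearizes the cubic term and produces the coefficient $(c_3^2-2c_2c_4)/(12c_2^2)$. The $c_3^2$ part in particular enters only through the corrector (the $\mathfrak u$--terms in $F^{\sigma,3}_j$), so an approach that omits the corrector would not see it in the same place. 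In short, the high-level strategy is right, but the mechanism you describe for both the nonlinearity and the drift is not the one that actually closes the argument.
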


Since $\lim_{n\to\infty}\mathrm{Var}_{\nu_n}[\xi^\pm_j]= 2\beta^{-1}$,
note here that the limiting equation \eqref{eq:drift-sbe}
clearly satisfies the condition for stationarity (see \textbf{(S)} in \cref{sec-ssbe}). 
Notice that the linear drift term in \eqref{eq:drift-sbe}
indicate that the \emph{effective sound velocity} is now
\begin{equation}
\label{eq:moving_frame_modification}
\sigma\sqrt{c_2}\alpha(1 + \frac 1n D_V) .  
\end{equation}

\begin{remark}
\label{rem-b}
The proof can be extended to stronger anharmonicity, with $\ve_n = n^{-\mathfrak b}$ with $\mathfrak b\in (1/4,1/2)$, with the price to slow down the Hamiltonian part of the dynamics, i.e. substituting
$\alpha$ with \Erase{$\alpha_n = n^{-\mathfrak a} \alpha$, $\mathfrak a\in (7/4,2)$}\Add{$\alpha_n = n^{\mathfrak c-2} \alpha$, $\mathfrak c\in (7/4,2)$ and we take $\mathfrak a=2$. Note that $\mathfrak c$ is the exponent of the intensity of the Hamiltonian part.}  
Then, choosing
\begin{equation}
\label{eq:nonlinear_term_critical_line}
\Erase{\mathfrak a = \mathfrak b + 3/2}
\Add{\mathfrak c=\mathfrak b+3/2}
\end{equation}
we obtain the same limit equations \eqref{eq:drift-sbe} but with $D_V = 0$. 
For the sub-critical case, i.e., when $\mathfrak c< \mathfrak b+3/2$, we expect diffusive behavior like \eqref{eq:ou}.  
On the other hand, for the case \Erase{$\hat{\beta}\in (0,1/4]$}\Add{$\mathfrak b \in (0,1/4]$}, 
we need to handle higher-order terms in the Taylor expansion of the function $V_n$.
Although we decided here to avoid those cases, leaving them as a future work,
we expect that the limiting equation will be analogous to the main theorem - when on (resp. below) the critical line \eqref{eq:nonlinear_term_critical_line} the limit would be given by the SBE (resp. stochastic heat equation).   
The whole picture of our main result as well as the conjecture
that we described above is summarized in \cref{fig:phonon_fluctuation}. 
\end{remark}

\begin{figure}[htb!]
\begin{center}
\begin{tikzpicture}[scale=0.3]
\draw (0,25) node[left]{\Erase{$\mathfrak a$}\Add{$\mathfrak c$}};
\draw (25,0) node[below]{$\mathfrak b$};
\draw (20,0) node[below]{$1/2$};
\draw (10,0) node[below]{$1/4$};
\draw (0,0) node[left]{$1$};
\draw (0,0) node[below]{$0$};
\draw (0,10) node[left] {$3/2$};
\draw (0,15) node[left] {$7/4$};
\draw (0,20) node[left]{$2$};
\fill[light-gray] (0,10) -- (20,20) -- (25,20) -- (25,25) -- (0,25) -- cycle;
\fill[fill=cyan, fill opacity=0.15] (0,0) -- (25,0) -- (25,20) -- (20,20) -- (10,15)--cycle;
\path[pattern=north west lines, pattern color=blue, fill opacity=0.9] (0,0) -- (10,15) -- (0,10)--cycle;
\draw[-,=latex, magenta, ultra thick] (10,15) -- (20,20) node[midway,above,sloped] {\textbf{SBE}};
\draw[-,=latex, magenta, dashed, ultra thick] (0,10) -- (10,15) node[midway,above,sloped] {\textbf{SBE?}};
\draw[-,=latex, cyan, ultra thick] (20,20) -- (25,20) node[midway,above,sloped] {\textbf{SHE}};
\node[circle,fill=magenta,inner sep=1.0mm] at (20,20) {};
\node[circle,fill=magenta,inner sep=1.0mm] at (10,15) {};
\node[circle,fill=white, inner sep=0.6mm] at (10,15) {};
\node[] at (15,10) {\textcolor{cyan}{\textbf{SHE}}};
\node[] at (2.5,9) {\textcolor{blue}{\textbf{SHE?}}};
\draw[-,=latex, dashed] (20,-0.1) -- (20,19.5);
\draw[-,=latex, dashed] (0,0) -- (19.5,19.5);
\draw[-,=latex, dashed] (10,0) -- (10,14.5);
\draw[-,=latex, blue, dashed, ultra thick] (0,0) -- (9.5,14.5);
\draw[-,=latex, dashed] (0,15) -- (9.5,15); 
\draw[-,=latex, dashed] (0,20) -- (19.5,20);
\draw[->,>=latex] (0,0) -- (26,0);
\draw[->,>=latex] (0,0) -- (0,26);
\end{tikzpicture}
\end{center}
\caption{Expected fluctuations for the two phonon modes,
for each choice of scaling \Erase{$\alpha_n=n^{\mathfrak a-2} \alpha$}\Add{$\alpha_n=n^{\mathfrak c-2} \alpha$} and $\varepsilon_n= n^{-\mathfrak b}$.  
We only prove here the case \Erase{$\mathfrak a = 2$}\Add{$\mathfrak c=2$} (diffusive scaling) and $\mathfrak b= 1/2$, but the proof can be extended along the red continuous line up to $\mathfrak b>1/4$ and \Erase{$\mathfrak a > 7/4$}\Add{$\mathfrak c>7/4$}.}
\label{fig:phonon_fluctuation}
\end{figure}
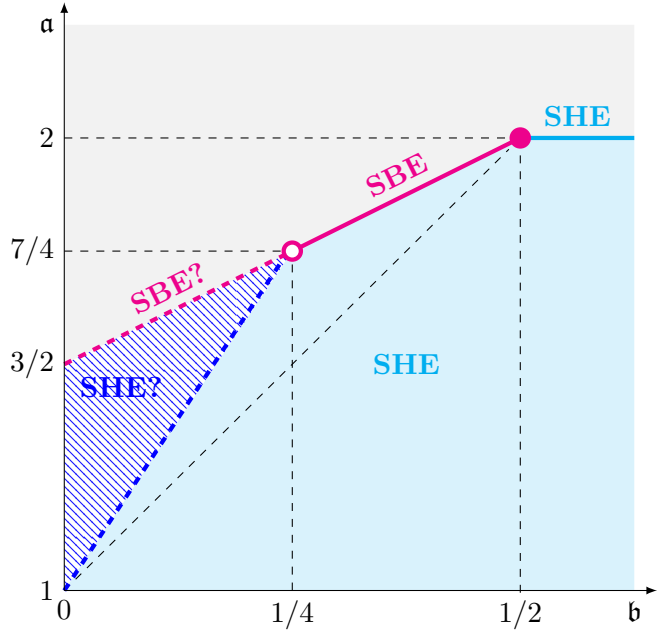

\section{Martingale decomposition} 
\label{sec:sketch}
Here we give a sketch of the proof of the main result.
In the following we set 
$ v^\sigma_n = \sigma\sqrt{c_2}\alpha n$.
Recall we have defined $\varphi_z(x) = \varphi(x + z)$ for all $x,z\in \mathbb R$.

We will see that in the time evolution of the phonon modes a large divergent term arises from the cubic term of the interaction, i.e., a term like $(c_3/2)r_j^2$. In order to balance this term we need to add a small \emph{corrector} to the definition of the phonon modes proportional to the energy mode (see \eqref{eq:I} and the following). 
For that purpose, let us consider a weak perturbation of the
phonon modes by the energy:
\begin{equation}
\label{eq:xi_definition}
\tilde\xi_j^\sigma = \sqrt{c_2} r_{j+1} + \sigma p_j + \varepsilon_n \mathfrak u e_j
\end{equation}
where 
\begin{equation}
\label{eq:coupling_constant_choice}
\mathfrak u = \frac{c_3}{2c_2^{3/2}} . 
\end{equation}
Correspondingly define for each $\varphi\in\mathcal S(\mathbb R)$
\begin{equation*}
\tilde{\mathcal{X}}^{\sigma,n}_t(\varphi)
= \frac{1}{\sqrt{n}} \sum_{j\in\mathbb Z}
(\tilde\xi^\sigma_j(t) - E_{\nu_n}[\tilde\xi^\sigma_j]\big)  \varphi(\tfrac{j}{n}).
\end{equation*}
Note that the field $\tilde{\mathcal X}^{\sigma,n}_\cdot$ is a small
perturbation of the one associated to the phonon mode $\mathcal X^{\sigma,n}_\cdot$ and,
as  a consequence of the following lemma, these modified fields should be identical in the limit $n\to\infty$. 

\begin{lemma}\label{lem:corr}
The process $\{\mathcal{X}^{\sigma,n}_t(\varphi_{v^\sigma_nt})
- \widetilde{\mathcal{X}}^{\sigma,n}_t(\varphi_{v^\sigma_nt})\}_{0\le t \le T}$
converges in law to $0$ in $D([0,T], \mathbb R)$. 
\end{lemma}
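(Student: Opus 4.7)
The plan is to rewrite the difference as a scaled energy fluctuation field and then bound its supremum uniformly in $n$ via Dynkin's decomposition, Doob's inequality, and Proposition~\ref{prop:kipnis_varadhan_estiamte}. By the definition \eqref{eq:xi_definition} one has $\tilde\xi^\sigma_j - \xi^\sigma_j = \varepsilon_n\mathfrak u\,e_j$ for every $j$, and since $\varepsilon_n = n^{-1/2}$,
$$
\widetilde{\mathcal X}^{\sigma,n}_t(\varphi_{v^\sigma_n t}) - \mathcal X^{\sigma,n}_t(\varphi_{v^\sigma_n t}) = \frac{\mathfrak u}{\sqrt n}\,\mathcal X^{0,n}_t(\varphi_{v^\sigma_n t}).
$$
It therefore suffices to show that $\mathbb E\bigl[\sup_{t\in[0,T]}|\mathcal X^{0,n}_t(\varphi_{v^\sigma_n t})|^2\bigr]$ is bounded uniformly in $n$: the extra $n^{-1/2}$ prefactor then yields convergence to $0$ in $L^2$ of the supremum, and hence in probability in $D([0,T],\mathbb R)$ since constant paths inherit the uniform topology.

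The pointwise-in-$t$ version is immediate from the invariance of $\nu_n$ and its product form: for every fixed $t$ the variables $\{e_j(t)\}_j$ are i.i.d. and Lemma~\ref{lem:static_estimate} gives $\mathrm{Var}_{\nu_n}(e_0)\le C$ uniformly in $n$, so
$$
\mathbb E\bigl[|\mathcal X^{0,n}_t(\varphi_{v^\sigma_n t})|^2\bigr] = \mathrm{Var}_{\nu_n}(e_0)\cdot\frac 1n\sum_{j\in\mathbb Z}\varphi(j/n+v^\sigma_n t)^2 \le C\|\varphi\|_{L^2(\mathbb R)}^2.
$$
To upgrade this to the supremum I would set $Z_n(s):=\mathcal X^{0,n}_s(\varphi_{v^\sigma_n s})$ and apply Dynkin's formula for this time-inhomogeneous observable:
$$
Z_n(t) = Z_n(0) + \int_0^t(\partial_s + L_n)Z_n(s)\,ds + M_n(t).
$$
The martingale $M_n$ is handled by Doob's maximal inequality together with an explicit computation of the carré du champ (only the noise $S$ contributes, since $A$ is antisymmetric and a derivation), yielding $\mathbb E[\sup_t|M_n(t)|^2]\le C_T\|\varphi'\|_{L^2}^2$. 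The integrated drift is estimated via Proposition~\ref{prop:kipnis_varadhan_estiamte}, which gives
$$
\mathbb E\Big[\sup_{t\in[0,T]}\Big|\int_0^t(\partial_s+L_n)Z_n(s)\,ds\Big|^2\Big]\le 24\int_0^T\|(\partial_s+L_n)Z_n(s)\|^2_{-1}\,ds.
$$

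The crux of the argument is therefore the uniform bound $\|(\partial_s+L_n)Z_n(s)\|^2_{-1}\le C(1+\|\varphi\|_{H^2}^2)$. A summation by parts decomposes $(\partial_s+L_n)Z_n(s)$ into three pieces: a moving-frame term $\sigma\sqrt{c_2}\alpha\sqrt n\sum_j\varphi'(j/n+v^\sigma_n s)\,\overline{e_j}(s)$ from $\partial_s$, the Hamiltonian energy-current contribution $-\sqrt n\sum_j\varphi'(j/n+v^\sigma_n s)\,p_j V'_n(r_{j+1})$ from $\alpha n^2 A$, and a diffusive remainder of order $n^{-1/2}$ from $n^2 S$. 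Each of the first two pieces has $L^2(\nu_n)$-norm of order $\sqrt n$ and cannot be controlled in isolation; the key observation is that a suitable combination of them can be expressed, up to an $O(1)$ dissipative error, as $L_n h_n(s)$ for a local field $h_n(s)\sim \sum_j\psi_j(s)\,p_{j+1}r_{j+1}$ with test profile $\psi(s)$ built from $\varphi'$. Using $h_n(s)$ in the variational definition of $\|\cdot\|_{-1}^2$ absorbs the singular contribution into $\langle-Sh_n(s),h_n(s)\rangle$ and yields the desired uniform bound, which combined with the previous estimates gives $\mathbb E[\sup_t|Z_n(t)|^2]\le C_T$ and concludes the proof.
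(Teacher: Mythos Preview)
Your reduction to the energy field and the overall Dynkin strategy match the paper, and your treatment of the initial value and the martingale is fine. The gap is in the drift estimate.

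First, a direct bound $\|(\partial_s+L_n)Z_n(s)\|_{-1}^2\le C$ cannot work as stated. The dominant terms in the drift, namely $\sigma\sqrt{c_2}\alpha\sqrt n\sum_j\overline{e_j}\varphi'_j$ and $-\alpha\sqrt n\sum_j p_jV_n'(r_{j+1})\varphi'_j$, contain pieces (such as $\overline{V_n(r_j)}$) that depend only on the positions and hence lie in the kernel of $S$; for such functions the $\|\cdot\|_{-1}$ norm is not finite, so one cannot simply invoke Proposition~\ref{prop:kipnis_varadhan_estiamte} on the full drift. Your fix via a corrector $h_n\sim\sum_j\psi_j\,p_{j+1}r_{j+1}$ does not salvage this: on one hand, plugging a specific $g=h_n$ into the variational formula $\|F\|_{-1}^2=\sup_g\{2\langle F,g\rangle-\|g\|_1^2\}$ gives a \emph{lower} bound, not an upper bound; on the other hand, even if you mean to subtract $L_nh_n$ first, a direct computation of $A(p_{j}r_{j+1})\approx c_2r_{j+1}^2-c_2r_jr_{j+1}+p_jp_{j+1}-p_j^2$ shows that the diagonal quadratic terms $r_{j+1}^2$ and $p_j^2$ come out with \emph{opposite} signs, whereas in your drift (after grouping) they enter with the \emph{same} sign $\sigma\sqrt{c_2}\bigl(\tfrac12\overline{p_j^2}+\tfrac{c_2}{2}\overline{r_j^2}\bigr)$. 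No scalar multiple of this corrector cancels both.

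The paper avoids this difficulty by proving a weaker (and sufficient) statement: it works with $\tfrac{1}{\sqrt n}\mathcal X^{0,n}_t(\varphi_{v^\sigma_nt})$ rather than $\mathcal X^{0,n}_t(\varphi_{v^\sigma_nt})$ itself, so that the two drift pieces are $\int_0^t\sum_j p_jV_n'(r_j)\varphi'_j\,ds$ and $\int_0^t\sum_j\overline{e_j}\varphi'_j\,ds$ \emph{without} the extra $\sqrt n$. These are then bounded uniformly in $n$ by the one-block argument of Lemma~\ref{lem:consequence_one_block_estimate} (replace $p_j$ by a block average and estimate the remainder by Cauchy--Schwarz) and, for the energy piece, by combining this with equipartition (Lemma~\ref{lem-equipart}). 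This yields tightness of $\tfrac{\mathfrak u}{\sqrt n}\mathcal X^{0,n}_\cdot(\varphi_{v^\sigma_n\cdot})$ in $D([0,T],\mathbb R)$, and since the fixed-time $L^2$ norm is $O(n^{-1/2})$, any limit point is identically $0$. If you want to keep your stronger target $\mathbb E[\sup_t|Z_n(t)|^2]\le C$, you would need a genuinely new estimate; the sketch you give does not provide it.
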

The proof of Lemma \ref{lem:corr} is postponed to appendix \cref{sec:proof-lemma-refl}. 
Now, let us consider the time evolution of the modified field $\widetilde{\mathcal X}^{\sigma,n}_\cdot$.  
Our starting point is to obtain a martingale decomposition in the following way.

\begin{lemma}
\label{lem:martingale_decomposition_main}
For each $\sigma\in \{+,-\}$ and for any $\varphi\in\mathcal S(\mathbb R)$, we have that 
\begin{equation}
\label{eq:martingale_decomposition}
\begin{aligned}
\widetilde{\mathcal{X}}^{\sigma,n}_t(\varphi _{v^\sigma_nt})
&= \widetilde{\mathcal{X}}^{\sigma,n}_0(\varphi)
+\frac{\gamma}{4} \int_0^t
\widetilde{\mathcal{X}}^{\sigma,n}_s(\partial_x^2 \varphi _{v^\sigma_ns}) ds \\
&\quad-\sigma\alpha\frac{c_3}{8c_2^2} 
\int_0^t\sum_{j\in\mathbb Z}
\overline{\xi}^{\sigma}_j(s) \overline{\xi}^\sigma_{j+1}(s)
\partial_x \varphi(\tfrac{j}{n}+ {v^\sigma_ns}) ds\\
&\quad+ \sigma \alpha \frac{c_3^2-2c_2c_4}{24c_2^{5/2}}
\int_0^t \widetilde{\mathcal{X}}^{\sigma,n}_s(\partial_x \varphi _{v^\sigma_ns})ds 
+ \mathcal{M}^{\sigma,n}_t(\varphi)
+ \mathcal{R}^{\sigma,n}_t(\varphi)
\end{aligned}
\end{equation}
where $\mathcal M^{\sigma,n}_\cdot(\varphi)$ is a martingale with respect to the natural filtration whose quadratic variation satisfies
\begin{equation}
\label{eq:quadratic_variation_simple}
\begin{aligned}
\langle \mathcal{M}^{\sigma,n}(\varphi)\rangle_t
&= \int_0^t \frac{1}{2n}
\sum_{j\in\mathbb Z}
\gamma (p_j - p_{j+1})^2 (s)
\big(\partial_x\varphi(\tfrac{j}{n}+{v^\sigma_ns})\big)^2 ds
+ \widetilde{\mathcal R}^{\sigma,n}_t(\varphi)
\end{aligned}
\end{equation}
where the reminder terms $\mathcal{R}^{\sigma,n}_\cdot$ and
$\widetilde{\mathcal{R}}^{\sigma,n}_\cdot$ vanish in law as $n\to\infty$.  
\end{lemma}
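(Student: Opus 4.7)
The plan is to apply Dynkin's martingale formula to the time-dependent observable $F_s(\omega)=\widetilde{\mathcal{X}}^{\sigma,n}_s(\varphi_{v^\sigma_n s})$, which gives
$\widetilde{\mathcal{X}}^{\sigma,n}_t(\varphi_{v^\sigma_n t})-\widetilde{\mathcal{X}}^{\sigma,n}_0(\varphi)=\int_0^t(\partial_s+L_n)F_s\,ds+\mathcal{M}^{\sigma,n}_t(\varphi)$, and then to expand the integrand and identify each piece of \eqref{eq:martingale_decomposition}. Since $F_s$ is linear in $\tilde\xi^\sigma$, the task reduces to computing $L_n\tilde\xi^\sigma_j=\alpha n^2 A\tilde\xi^\sigma_j+n^2 S\tilde\xi^\sigma_j$ and comparing the resulting fluctuation field with the transport contribution $v^\sigma_n n^{-1/2}\sum_j \overline{\tilde\xi^\sigma_j}\,\partial_x\varphi(j/n+v^\sigma_n s)$ coming from $\partial_s$.

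First I would write $A\tilde\xi^\sigma_j=\sqrt{c_2}\,\nabla^+p_j+\sigma\nabla^+V'_n(r_j)+\varepsilon_n\mathfrak{u}\,\nabla^+(p_jV'_n(r_j))$ and $S\tilde\xi^\sigma_j=\sigma(\gamma/2)\Delta p_j+\varepsilon_n\mathfrak{u}(\gamma/4)\Delta(p_j^2)$, then Taylor-expand $V'_n(r)=c_2 r+(c_3/2)\varepsilon_n r^2+(c_4/6)\varepsilon_n^2 r^3+O(\varepsilon_n^3)$ and split by polynomial degree in $(r,p)$. The degree-one Hamiltonian current $\sqrt{c_2}\nabla^+p_j+\sigma c_2\nabla^+r_j$ equals $\sigma\sqrt{c_2}\,\nabla^+\xi^\sigma_{j-1}$ after an index shift, so its contribution $\alpha n^2/\sqrt n$ times one summation by parts cancels exactly against the transport term $v^\sigma_n=\sigma\sqrt{c_2}\alpha n$ by definition of the moving frame. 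The subleading $\partial_x^2$ correction from that cancellation, together with the leading noise contribution $n^2\sigma(\gamma/2)\Delta p_j$ (which after two summations by parts produces $\sigma(\gamma/2)\cdot\widetilde{\mathcal{X}}^{\sigma,n}_s(\partial_x^2\varphi_{v^\sigma_n s})$ for the $p$-part of $\tilde\xi$, combining with the symmetric $r$-part contribution through the recombination into $\tilde\xi^\sigma$), yields the claimed coefficient $\gamma/4$.

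For the degree-two terms in $A\tilde\xi^\sigma_j$, namely $\sigma(c_3/2)\varepsilon_n\nabla^+r_j^2+\varepsilon_n\mathfrak{u}\,c_2\nabla^+(p_j r_j)$, the overall prefactor $\alpha n^{3/2}\varepsilon_n=\alpha n$ makes them of order one. Expressing $\sqrt{c_2}r_{j+1}=(\xi^+_j+\xi^-_j)/2$ and $p_j=(\xi^+_j-\xi^-_j)/2$ and substituting into both quadratic contributions, the choice $\mathfrak{u}=c_3/(2c_2^{3/2})$ from \eqref{eq:coupling_constant_choice} is precisely what eliminates the $(\xi^{-\sigma})^2$ and $\xi^+\xi^-$ cross terms; after one summation by parts one is left with the single same-mode quadratic $-\sigma\alpha(c_3/(8c_2^2))\sum_j\overline{\xi^\sigma_j}\,\overline{\xi^\sigma_{j+1}}\,\partial_x\varphi(j/n+v^\sigma_n s)$ plus a constant arising from recentring (which is absorbed by passing to $\overline{\xi^\sigma_j}$). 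The degree-three terms, coming from $\sigma(c_4/6)\varepsilon_n^2\nabla^+r_j^3$ together with the cubic piece of $\varepsilon_n\mathfrak{u}\nabla^+(p_jV'_n(r_j))$, carry prefactor $\alpha n^{3/2}\varepsilon_n^2=\alpha n^{1/2}$; replacing each monomial by its $\nu_n$-mean plus a centred fluctuation, the mean survives one summation by parts and produces the linear drift $\sigma\alpha(c_3^2-2c_2c_4)/(24c_2^{5/2})\,\widetilde{\mathcal{X}}^{\sigma,n}_s(\partial_x\varphi_{v^\sigma_n s})$, while the centred cubic fluctuations, the $O(\varepsilon_n^3)$ Taylor tails, the non-same-mode pieces of the quadratic, and the diffusive $\Delta(p_j^2)$ contribution are all deposited into $\mathcal{R}^{\sigma,n}_t(\varphi)$; their vanishing in probability follows from the $H_{-1}$ bound \cref{prop:kipnis_varadhan_estiamte} combined with the uniform exponential moments of \cref{lem:static_estimate}, the wrong-frame terms being further handled by the Riemann--Lebesgue estimates of \cref{sec:riemann_lebesgue_estimates}.

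For the quadratic variation, I would use $\langle\mathcal{M}^{\sigma,n}(\varphi)\rangle_t=\int_0^t(L_n F_s^2-2F_s L_n F_s)\,ds$. Since $A$ is a derivation it contributes nothing, and the carré du champ of $S$ applied to the leading piece $\sigma p_j$ of $\tilde\xi^\sigma_j$ gives exactly $(1/2n)\sum_j\gamma(p_j-p_{j+1})^2(\partial_x\varphi(j/n+v^\sigma_n s))^2$ after one summation by parts inside the squared difference and the estimate $\varphi(j/n)-\varphi((j+1)/n)=-n^{-1}\partial_x\varphi+O(n^{-2})$; the $r$-part and the energy corrector contribute only through $O(\varepsilon_n)$ cross terms, which go into $\widetilde{\mathcal{R}}^{\sigma,n}$. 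The main obstacle, in my view, is the careful bookkeeping of the degree-two expansion: verifying that the combination of the $c_3$-current with the energy corrector $\varepsilon_n\mathfrak{u}e_j$ produces exactly one surviving same-mode quadratic with the stated coefficient, while the remaining quadratic pieces (same-mode diagonal, opposite-mode diagonal and $\xi^+\xi^-$ cross) are collected in $\mathcal{R}^{\sigma,n}$ in a form that later tools (Boltzmann--Gibbs, Riemann--Lebesgue) can actually dispatch; a second, more technical obstacle is to control simultaneously the summation-by-parts boundary errors and the higher-order Taylor tails of $V_n$ under only the exponential-growth assumption in \cref{asm:potential}.
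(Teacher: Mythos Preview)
Your overall architecture (Dynkin, split by degree in $(r,p)$, summation by parts, carr\'e du champ) matches the paper, but the core mechanism you describe for the degree-two terms is incorrect, and this hides a genuine missing ingredient. You assert that the choice $\mathfrak u=c_3/(2c_2^{3/2})$ ``eliminates the $(\xi^{-\sigma})^2$ and $\xi^+\xi^-$ cross terms'' algebraically. It does not. If you actually carry out the substitution $\sqrt{c_2}\,r=(\xi^++\xi^-)/2$, $p=(\xi^+-\xi^-)/2$ in the three degree-two contributions (the $c_3 r^2$ piece of $V_n'$, the $c_2 p r$ piece of the energy current, and the $\sigma\sqrt{c_2}\mathfrak u\, e$ piece of the transport), you find for $\sigma=+$ the combination $-\tfrac{c_3}{8c_2}\big[(\xi^+)^2-(\xi^-)^2\big]-\tfrac{c_3}{4c_2}\xi^+\xi^-$: both the wrong-mode square and the cross term survive with $O(1)$ coefficients. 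What the choice of $\mathfrak u$ actually buys is that the non-$pr$ quadratic pieces collapse to the specific form $c_2\overline{r_j^2}-\overline{p_j^2}$ (equivalently $\xi^+\xi^-$), and this is then dispatched by the \emph{equipartition of energy} lemma (\cref{lem-equipart}), an ingredient you never invoke. The remaining dominant term $W_j\propto p_j\overline r_{j+1}$ carries \emph{both} $(\xi^+)^2$ and $(\xi^-)^2$, and the wrong-mode square is removed only afterwards by the quadratic Riemann--Lebesgue estimate (\cref{lem:quadratic_field_with_wrong_velocity}).

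The same misunderstanding recurs in your treatment of the viscosity coefficient. There is no ``symmetric $r$-part contribution'': $S$ acts only on momenta, so $S\tilde\xi^\sigma_j=\sigma(\gamma/2)\Delta p_j+O(\varepsilon_n)$ and no Hamiltonian $\partial_x^2$ correction participates. The factor $\gamma/4$ comes instead from writing $\sigma p_j=\tfrac12(\xi^\sigma_j-\xi^{-\sigma}_j)$ and discarding the $\xi^{-\sigma}$ half via the linear Riemann--Lebesgue lemma (\cref{lem:field_with_wrong_velocity}); likewise the $\partial_x^2$ terms produced by $F^{\sigma,0}$ and $II^\sigma$ combine into a $\xi^{-\sigma}$ field and are removed the same way, not absorbed into the viscosity. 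Finally, for the degree-three drift, ``replacing each monomial by its mean'' is not enough: the paper needs a fluctuation-level replacement $r_j^3\rightsquigarrow (c_2\beta)^{-1}r_j$ (\cref{lem:cubic_term_characterization}), proved via a second Dynkin argument, before the linear drift coefficient $(c_3^2-2c_2c_4)/(24c_2^{5/2})$ can be read off.
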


Looking at the resulting martingale decomposition \eqref{eq:martingale_decomposition},
we can naturally expect that the second term on the  right-hand side gives rise to the viscosity
term in the limiting equation \eqref{eq:drift-sbe}, the third term will give the nonlinearity and
the fourth term the drift.


We will show in \cref{sec:tightness} that each term in the decomposition
\eqref{eq:martingale_decomposition} is tight, and then we identify the limit points in
\cref{sec:identification_limit_points} to see that the aforementioned heuristics are indeed the case. 
In the forthcoming parts of this section is devoted to prove \cref{lem:martingale_decomposition_main}.

By Dynkin's formula, the processes  
\begin{equation*}
\mathcal{M}^{\sigma,n}_t (\varphi)
= \widetilde{\mathcal{X}}^{\sigma,n}_t (\varphi_{v^\sigma_nt})
-\widetilde{\mathcal{X}}^{\sigma,n}_0(\varphi)
-\int_0^t(\partial_s + L_n) \widetilde{\mathcal{X}}^{\sigma,n}_s(\varphi_{v^\sigma_n s}) ds 
\end{equation*}
and $\mathcal{M}^{\sigma,n}_t(\varphi)^2 - \langle \mathcal{M}^{\sigma,n} (\varphi)\rangle_t$
are martingale where 
\begin{equation}
\label{eq:quadratic_variation}
\begin{aligned}
\langle \mathcal{M}^{\sigma,n}(\varphi)\rangle_t
= \int_0^t \Gamma^{\sigma,n}_s(\varphi)ds 
\end{aligned}
\end{equation}
with the carr\'e du champs operator $\Gamma^{\sigma,n}_\cdot$ given by 
\begin{equation}
\label{eq:carre_du_champs}
\Gamma^{\sigma,n}_t(\varphi) 
= L_n \widetilde{\mathcal{X}}^{\sigma,n}_t(\varphi_{v^\sigma_nt})^2 
-2 \widetilde{\mathcal{X}}^{\sigma,n}_t(\varphi_{v^\sigma_nt})
L_n \widetilde{\mathcal{X}}^{\sigma,n}_t(\varphi_{v^\sigma_nt}) 
\end{equation}
for each $\varphi\in\mathcal S(\mathbb R)$ and $t\in [0,T]$. 
A simple computation shows that carr\'e du champs $\Gamma^{\sigma,n}_\cdot$ is computed as follows: 
\begin{equation}
\label{eq:carre_du_champs_computation}
\Gamma^{\sigma,n}_t(\varphi)
= \frac{\gamma}{2n} \sum_{j\in\mathbb Z} (\Xi^{p,\sigma}_j-\Xi^{p,\sigma}_{j+1})^2(t)
(\nabla^n \varphi\big(\tfrac{j}{n} + v^\sigma_nt)\big)^2  
\end{equation}
for each $\varphi\in\mathcal S(\mathbb R)$, where we introduced the variables 
\begin{equation}\label{eq:Xip}
\Xi^{p,\sigma}_j
= \sigma p_j + \frac{\varepsilon_n \mathfrak u}{2} p_j^2. 
\end{equation}
Then, it is not hard to see that, by dropping terms with $\varepsilon_n$ in \eqref{eq:carre_du_champs_computation},
the main term of the quadratic variation is given by \eqref{eq:quadratic_variation_simple}. 

Thus, to obtain the martingale decomposition in the form of \eqref{eq:martingale_decomposition}, what is left is to compute the action of the generator. 
Hereinafter we split the generator into symmetric and antisymmetric parts and compute the action separately.



\subsection{Identification of the nonlinear part of the drift}
\label{sec:ident-non-line}
Now, to proceed, let us compute the action of the antisymmetric (i.e., Hamiltonian) part of the generator.
For each $\eta=r,p,e$, we define instantaneous current $J^\eta_{j,j+1}$ of $\eta$ by 
\begin{equation*}
A \eta_j = J^\eta_{j-1,j} -J^\eta_{j,j+1}. 
\end{equation*}
Then, we compute
$Ar_{j}=\nabla^+p_{j-1}$, $Ap_j=\nabla^+ V'_n(r_j)$ and
$Ae_j= \nabla^+(p_{j-1}V'_n(r_j))$ so that 
\begin{equation}
\label{eq:current_computation}
J^r_{j,j+1}= - p_{j} ,\quad
J^p_{j,j+1}=- V_n'(r_{j+1}) ,\quad
J^e_{j,j+1}=-p_{j} V_n'(r_{j+1}). 
\end{equation}
Moreover, let
\begin{equation*}
\begin{aligned}
J^{\tilde\xi^\sigma}_{j,j+1}
&= \sqrt{c_2} J^r_{j+1,j+2} 
+\sigma J^p_{j,j+1}
+\varepsilon_n\mathfrak u J^e_{j,j+1} \\
&= - \sqrt{c_2} p_{j+1} - \sigma  V_n'(r_{j+1}) - \varepsilon_n \mathfrak u p_{j} V_n'(r_{j+1})
\end{aligned}
\end{equation*}
be the instantaneous current of $\tilde\xi^\sigma$. 
Then, by summation-by-parts, we have  
\begin{equation*}
\begin{aligned}
(\partial_t + \alpha n^2 A) \widetilde{\mathcal{X}}^{\sigma,n}_t (\varphi_{v^\sigma_nt})
= {\alpha}{n^{1/2}} 
\sum_{j\in\mathbb Z}
J^{\tilde\xi^\sigma}_{j,j+1} \nabla^n \varphi (\tfrac{j}{n}+v^\sigma_n t) 
+ \frac{v^\sigma_n}{n^{1/2}} \sum_{j\in\mathbb Z}
\tilde\xi^\sigma_j(t) \partial_x \varphi(\tfrac{j}{n}+v^\sigma_n t) .
\end{aligned}
\end{equation*}
As we shall see below, it turns out that 
\begin{equation}
\label{eq:antisymmetric_current_dominant}
W_j = - 
\mathfrak u p_j\overline{r}_{j+1} \ven 
\end{equation}
remains as a dominant term when $n\to\infty$ in the right-hand side of the last display, provided $\mathfrak u$ is chosen by \eqref{eq:coupling_constant_choice}.
Then, we can see that the last display gives rises to the quadratic field $\mathcal B^{\sigma,n}_\cdot$ in the martingale decomposition \eqref{eq:martingale_decomposition}. 
Indeed, firstly note that we have the following identity: 
\begin{equation*}
\begin{split}
(\overline\xi^+_j)^2 - (\overline\xi^-_j)^2 = 4 \sqrt{c_2} p_j\overline{r}_{j+1}.
\end{split}
\end{equation*}
Then, we have that 
\begin{equation}\label{eq:W}
\begin{split}
\alpha n^{1/2} \sum_{j\in \mathbb Z} W_j  \partial_x\varphi(\tfrac{j}{n}+v^\sigma_nt)  
&= -\frac{c_3}{2c_2^{3/2}} \alpha 
\sum_{j\in \mathbb Z} p_j\overline{r}_{j+1}
\partial_x\varphi(\tfrac{j}{n}+v^\sigma_nt) 
+ O(\varepsilon_n ) \\
&= -\frac{c_3}{8c_2^2} 
\alpha
\sum_{j\in \mathbb Z}
( (\overline\xi^+_j)^2 - (\overline\xi^-_j)^2) \varphi(\tfrac{j}{n}+v^\sigma_nt)  
\end{split}
\end{equation}
for each $t\in[0,T]$.
The last display is reminiscent of the nonlinear term in the stochastic Burgers equation.  
However, when we take a correct velocity for each mode $\sigma=\pm$, the nonlinear term in the last display concerning the other mode, which is observed in a wrong (divergent) moving frame, would be vanishing, and thus the limiting equation is decoupled with the same absolute value of coefficient in front of the nonlinear term for each mode.  
This is a consequence of \cref{lem:quadratic_field_with_wrong_velocity} - a version of Riemann-Lebesgue lemma, which will be justified later in~\cref{sec:riemann_lebesgue_estimates}.

In what follows, let us see that \eqref{eq:antisymmetric_current_dominant} is indeed the dominant term and the other terms do not contribute in the limit.  
To that end, let us replace the discrete derivative by the continuous one.
Note that 
\begin{equation*}
\Big| \nabla^n \varphi(\tfrac{j}{n})
- \partial_x \varphi(\tfrac{j}{n})
- \frac{1}{2n} \partial_x^2 \varphi(\tfrac{j}{n}) \Big|
\le \frac{1}{6n^2} \sup_{0\le a \le 1}
\big| \partial_x^3\varphi(\tfrac{j+a}{n}) \big|,
\end{equation*}
from which we deduce  
\begin{equation*}
(\partial_t + n^2 \alpha A) \widetilde{\mathcal{X}}^{\sigma,n}_t (\varphi_{v^\sigma_nt})    
= I^\sigma + II^\sigma + O\Big(\frac{1}{n}\Big) 
\end{equation*}
where 
\begin{equation*}
\begin{aligned}
&I^\sigma = 
\alpha n^{1/2} 
\sum_{j\in\mathbb Z} 
\Big( J^{\tilde\xi^\sigma}_{j,j+1}(t)
+ \sigma \sqrt{c_2} 
\tilde\xi^\sigma_j(t) \Big) 
\partial_x \varphi(\tfrac{j}{n}+v^\sigma_nt)  , \\ 
&II^\sigma = \frac{\alpha}{2 n^{1/2}} 
\sum_{j\in\mathbb Z} 
J^{\tilde\xi^\sigma}_{j,j+1}(t) \partial_x^2 \varphi(\tfrac{j}{n}+v^\sigma_nt)  .
\end{aligned}
\end{equation*}
Now, by Taylor's theorem, note that 
\begin{equation*} 
V_n(r)= \frac{c_2}{2}r^2 
+ \frac{c_3}{3!} \varepsilon_n r^3
+ \frac{c_4}{4!} \varepsilon_n^2 r^4
+ O(\varepsilon_n^3)
\end{equation*} 
and
\begin{equation*}
V_n'(r)
=c_2 r 
+ \frac{c_3}{2} \varepsilon_n r^2 
+ \frac{c_4}{3!} \varepsilon_n^2 r^3
+ O(\varepsilon_n^3).
\end{equation*}
In the last line, the reminder term $O(\varepsilon_n^3)$
is absolutely bounded by $\varepsilon_n^3 e^{\eta_V|\eta|}$
where $\eta_V$ is the constant in Assumption \ref{asm:potential}. 
In particular, note that this reminder term satisfies the bound 
\begin{equation}
\label{eq:err}
\begin{aligned}
\mathbb E_n \bigg[\sup_{0\le t\le T}\bigg| n^{1/2} 
\sum_{j\in\mathbb Z}
\big( O(\varepsilon_n^3) - E_{\nu_n}[O(\varepsilon_n^3)]\big)
\varphi(\tfrac{j}{n}) \bigg|^2 \bigg]
\lesssim 
n^2 \varepsilon_n^6 
\| \varphi\|^2_{L^2(\mathbb R)} = \frac 1n \| \varphi\|^2_{L^2(\mathbb R)},
\end{aligned}
\end{equation}
for any $\varphi\in\mathcal S(\mathbb R)$. Here one can clearly see that the moving frame does not matter in the bound \eqref{eq:err}. 

Now, recall that $E_{\nu_n}[V_n'(r_j)]=\tau=0$. 
It is not hard to see that 
\begin{equation}
\label{eq:r_moment_computation}
E_{\nu_n}[r_j^2]
= \frac{1}{c_2\beta} + O(\varepsilon_n^2), \quad
E_{\nu_n}[r_j]= 
-\frac{c_3}{2c_2^2\beta}\varepsilon_n + O(\varepsilon_n^3) .
\end{equation}
Thus, we rewrite $V'_n$ as 
\begin{equation}
\label{eq:5}
\begin{aligned}
V_n'(r_j) 
&= c_2  r_j + \frac{c_3}{2} \varepsilon_n {r_j^2} 
+ \frac{c_4}{3!} \varepsilon_n^2 r_j^3
+ O(\varepsilon_n^3) \\
&=c_2 \overline{r}_j  
- \frac{c_3}{2\beta c_2}\varepsilon_n
+ \frac{c_3}{2} \varepsilon_n \overline{r_j^2} + \frac{c_3}{2\beta c_2}\varepsilon_n
+ \frac{c_4}{3!} \varepsilon_n^2 r_j^3+ O(\varepsilon_n^3) \\
&= 
c_2 \overline{r_j}  
+ \frac{c_3}{2} \varepsilon_n \overline{r_j^2}
+ \frac{c_4}{3!} \varepsilon_n^2 r_j^3
+ O(\varepsilon_n^3) .
\end{aligned}
\end{equation}
Consequently,  
\begin{equation*}
\begin{aligned}
J^{\tilde\xi^\sigma}_{j,j+1}
&= - \sqrt{c_2}  p_{j+1}
- \sigma \Big(c_2 \overline{r_{j+1}} +
\frac{c_3}{2}\varepsilon_n \overline{r_{j+1}^2}
+ \frac{c_4}{6} \varepsilon_n^2 r_{j+1}^3 \Big) \\
&\quad- \varepsilon_n\mathfrak u p_j \Big(c_2 \overline{r_{j+1}}
+\frac{c_3}{2} \varepsilon_n \overline{r_{j+1}^2} \Big)
+ O(\varepsilon_n^3) \\
&= - \sqrt{c_2} p_{j+1}
-\sigma c_2 \overline{r_{j+1}} 
- \Big( \sigma\frac{c_3}{2} \overline{r_{j+1}^2}
+ \mathfrak u c_2 p_j \overline{r_{j+1}} \Big)\varepsilon_n \\
&\quad- \Big( \sigma\frac{c_4}{6} r_{j+1}^3 
+ \mathfrak u\frac{c_3}{2}  p_j \overline{r_{j+1}^2} \Big)\varepsilon_n^2
+ O(\varepsilon_n^3) 
\end{aligned}
\end{equation*}
and 
\begin{equation*}
\tilde{\xi}^\sigma_j - E_{\nu_n}[\tilde{\xi}^\sigma_j]
= \sqrt{c_2} \overline{r_{j+1}} 
+ \sigma p_j 
+\mathfrak u
\Big(\frac{1}{2}\overline{p_j^2} +\frac{1}{2}c_2 \overline{r_j^2} \Big) \varepsilon_n
+ \frac{c_3}{6} \mathfrak u \overline{r_j^3} \varepsilon_n^2
+ O(\varepsilon_n^3).  
\end{equation*}
With these expansions at hand, we can write 
\begin{equation}
\label{eq:I}
\begin{aligned}
I^\sigma = \sum_{k=0,1,2,3} \alpha n^{1/2}
\sum_{j\in\mathbb Z}
F^{\sigma,k}_j(\mathfrak r,\mathfrak p)
\partial_x \varphi(\tfrac{j}{n}+v^\sigma_nt) 
+ O(n^{-1}), 
\end{aligned}
\end{equation}
where the error $O(n^{-1})$ is negligible in the sense of \eqref{eq:err},
and $F^{\sigma,k}_j=F^{\sigma,k}_j(\mathfrak r, \mathfrak p)$ are defined by 
\begin{equation*}
\begin{aligned}
F^{\sigma,0}_j&= - \sigma c_2 (r_{j+1}- r_j) ,\\
F^{\sigma,1}_j &= - \sqrt{c_2} p_{j+1}
- \sigma c_2 \overline r_j
+ \frac{nv^\sigma_n}{\alpha n^2} (\sqrt{c_2} \overline r_j + \sigma p_j) =
- \sqrt{c_2} p_{j+1}
- \sigma c_2 \overline r_j
+ \sigma \sqrt{c_2} (\sqrt{c_2} \overline r_j + \sigma p_j) \\
&=\sqrt{c_2}( p_{j+1}-p_j),  \\
F^{\sigma,2}_j &= - \Big( \sigma\frac{c_3}{2} \overline{r_{j+1}^2}
+ \mathfrak u p_j \overline r_{j+1} \Big)\varepsilon_n 
+  \frac{nv^\sigma_n}{2\alpha n^2} \mathfrak u (c_2 \overline{r_j^2} + \overline{p_j^2})
\varepsilon_n \\
&= - \Big( \sigma\frac{c_3}{2} \overline{r_{j+1}^2}
+ \mathfrak u p_j \overline r_{j+1} \Big)\varepsilon_n 
+  \sigma \frac{\sqrt{c_2}}{2} \mathfrak u (c_2 \overline{r_j^2} + \overline{p_j^2})
\varepsilon_n ,\\
F^{\sigma,3}_j &=  - \Big( \sigma\frac{c_4}{6} \overline{r_{j+1}^3} 
+ \frac{c_3}{2}\mathfrak u p_j \overline{r_{j+1}^2} \Big)\varepsilon_n^2
+ \sigma \sqrt{c_2} \frac{c_3}{6} \mathfrak u \overline{r_j^3} \varepsilon_n^2.
\end{aligned}
\end{equation*}
Above, we used the fact that we can freely add any constant to $F^k_j$ since
telescopic sums vanish in \eqref{eq:I}.  

First, regarding the contribution of $F_j^{\sigma,0}$, by a summation-by-parts, we have that 
\begin{equation}
\label{eq:F0_second_derivative_from_asymmetric}
\alpha n^{1/2} 
\sum_{j\in\mathbb Z}
F^{\sigma,0}_j(\mathfrak r,\mathfrak p) \partial_x \varphi(\tfrac{j}{n}+v^\sigma_nt) 
= \frac{\sigma \alpha c_2}{n^{1/2}} 
\sum_{j\in\mathbb Z} \overline r_{j+1} \nabla^n \partial_x \varphi(\tfrac{j}{n}+v^\sigma_nt).
\end{equation}
The contribution of $F^0_j$ seems to survive in the limit at first glance.
It turns out, however, that sum of \eqref{eq:F0_second_derivative_from_asymmetric}
and $II$ is negligible, according to the fact that the fluctuation fields with wrong velocity are negligible,
which is a consequence of \cref{lem:field_with_wrong_velocity}.
We have a similar argument for $F_j^{\sigma,1}$.


Next, we consider the contribution of $F^{\sigma,2}_j$ that we write as
\begin{equation}
\label{eq:F2}
\begin{split}
F^{\sigma,2}_j
&= - \sigma \frac{c_3}{2} (r_{j+1}^2-r_j^2)\varepsilon_n 
+ \Big[ \Big(- \sigma\frac{c_3}{2} 
+ \frac{\sigma}{2} \mathfrak u c_2^{3/2} \Big) \overline{r_{j}^2}
+ \frac{\sigma \sqrt{c_2}}{2} \mathfrak u\overline{p_j^2} \Big] \varepsilon_n 
- \mathfrak u p_j \overline r_{j+1} \varepsilon_n \\
&= - \sigma \frac{c_3}{2} (r_{j+1}^2-r_j^2)\varepsilon_n 
+\sigma\frac{c_3}{4c_2}  \Big[ - c_2\overline{r_{j}^2}
+ \overline{p_j^2} \Big] \varepsilon_n 
- \mathfrak u p_j \overline r_{j+1} \varepsilon_n \\
\end{split}
\end{equation}
Using summation-by-parts, it is not difficult to see that the first term in the right-hand side of \eqref{eq:F2} does not contribute to the limit. 
To make the second term of the last display negligible, the following estimate regarding equipartition of energy is crucial. 

\begin{lemma}[Equipartition of energy] 
\label{lem-equipart}
For each mode $\sigma\in \{+,-\}$, we have that 
\begin{equation*}
\limsup_{n\to\infty} \mathbb E_n \bigg[ 
\sup_{0\le t \le T} \bigg|
\int_0^t \sum_{j\in \mathbb Z} \big( c_2\overline{r_j^2}(s) - \overline{p_j^2}(s) \big)
\partial_x\varphi(\tfrac{j}{n}+v^\sigma_ns)  ds \bigg|^2 \bigg] = 0. 
\end{equation*}
\end{lemma}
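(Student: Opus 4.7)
The strategy is to convert the difference of kinetic and potential energy densities $c_2 r_j^2 - p_j^2$ into a product of the two phonon modes $\xi^+$, $\xi^-$ via an exact algebraic identity, and then invoke a Riemann--Lebesgue-type cancellation, since in any frame $v^\sigma_n$ at least one of the two phonon modes is observed in a wrong, divergent moving frame. The key observation is the factorization
\[
\xi^+_{j-1}\,\xi^-_{j-1}
\;=\; (\sqrt{c_2}\,r_j + p_{j-1})(\sqrt{c_2}\,r_j - p_{j-1})
\;=\; c_2\,r_j^2 - p_{j-1}^2,
\]
which, combined with the trivial decomposition $-p_j^2 = -p_{j-1}^2 + (p_{j-1}^2 - p_j^2)$, gives the exact identity
\[
c_2\,r_j^2 - p_j^2 \;=\; \xi^+_{j-1}\,\xi^-_{j-1} \;+\; (p_{j-1}^2 - p_j^2).
\]
Since $E_{\nu_n}[p_{j-1}^2 - p_j^2] = 0$, taking centered versions of both sides yields the exact identity
\[
c_2\,\overline{r_j^2} - \overline{p_j^2} \;=\; \overline{\xi^+_{j-1}\,\xi^-_{j-1}} \;+\; (p_{j-1}^2 - p_j^2).
\]

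Given this decomposition I will handle the two resulting terms separately. For the telescopic gradient, discrete summation by parts yields
\[
\sum_{j\in\mathbb Z} (p_{j-1}^2 - p_j^2)\,\partial_x\varphi\big(\tfrac{j}{n} + v^\sigma_n s\big)
\;=\; \frac{1}{n}\sum_{j\in\mathbb Z} p_j^2\,\partial_x^2\varphi\big(\tfrac{j}{n} + v^\sigma_n s\big) \;+\; O(n^{-2})
\]
pointwise in $s$; using $\mathrm{Var}_{\nu_n}(p_j^2) = O(1)$ and Riemann-sum approximation of $\int\partial_x^2\varphi = 0$, both its mean and its fluctuation are $O(1/n)$ at fixed $s$, and a standard Cauchy--Schwarz in time gives an $L^2$-sup bound of order $T/\sqrt{n}\to 0$. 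After the one-step index shift $k = j-1$, which introduces only an additional $O(n^{-1/2})$ error by smoothness of $\varphi$, the lemma is reduced to proving
\[
\mathbb E_n\!\left[\sup_{0\le t\le T}\bigg|\int_0^t \sum_{k\in\mathbb Z}\overline{\xi^+_k\,\xi^-_k}(s)\,\partial_x\varphi\big(\tfrac{k}{n} + v^\sigma_n s\big)\,ds\bigg|^2\right] \;\xrightarrow[n\to\infty]{}\; 0.
\]

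The remaining quantity is a quadratic field built from the \emph{product of the two phonon modes}, whose sound velocities $\pm\sqrt{c_2}\,\alpha n$ are opposite and both divergent. In either frame $v^\sigma_n$ at least one factor is thus observed in a wrong divergent moving frame, and time integration produces the required oscillatory cancellation. This is precisely the regime handled by the Riemann--Lebesgue estimates to be developed in \cref{sec:riemann_lebesgue_estimates}: the statement needed here for the mixed-mode product $\overline{\xi^+\xi^-}$ is a close variant of \cref{lem:quadratic_field_with_wrong_velocity} (stated there for the same-mode squares $(\overline{\xi^\sigma})^2$ in the wrong frame that arise in the Burgers nonlinearity), the underlying mechanism being in both cases the rapid oscillation of a wrong-framed factor under time integration. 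The main obstacle of the proof is thus precisely this Riemann--Lebesgue step for the mixed-mode quadratic field, and once it is granted the remaining arguments---the algebraic reduction, the telescopic gradient and index-shift estimates, and the moment bounds provided by \cref{lem:static_estimate}---are entirely routine.
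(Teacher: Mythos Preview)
Your algebraic identity $\xi^+_{j-1}\xi^-_{j-1}=c_2 r_j^2-p_{j-1}^2$ is correct, and the telescopic gradient $p_{j-1}^2-p_j^2$ is indeed harmless. The problem is that the reduction is circular: since by definition $\overline{\xi^+_k\xi^-_k}=c_2\overline{r_{k+1}^2}-\overline{p_k^2}$, your ``reduced'' quantity $\sum_k\overline{\xi^+_k\xi^-_k}\,\partial_x\varphi$ is, up to a one-site index shift in the $r$-variable and the telescopic gradient you already discarded, exactly the original expression $\sum_j\big(c_2\overline{r_j^2}-\overline{p_j^2}\big)\partial_x\varphi$. So the appeal to a Riemann--Lebesgue estimate for the same-site product $\xi^+_k\xi^-_k$ is an appeal to the very lemma you are trying to prove.

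This is not merely a cosmetic issue. The Riemann--Lebesgue lemmas in \cref{sec:riemann_lebesgue_estimates} (\cref{lem:quadratic_field_with_wrong_velocity} and \cref{lem:quadratic_field_with_wrong_velocity_cross_fields}) are stated and proved for \emph{nearest-neighbour} products $\overline{\xi}^{\sigma}_j\overline{\xi}^{\sigma'}_{j+1}$, and their proofs go through the second-order Boltzmann--Gibbs principle, which replaces such a product by $\big(\overrightarrow{(\xi^\sigma)}^{\ell}_j\big)\big(\overrightarrow{(\xi^{\sigma'})}^{\ell}_j\big)$. That replacement relies crucially on the two factors living on disjoint sets of variables, so that the momentum-exchange noise can act on one factor while leaving the other invariant (see the role of $F_k$ in the proof of \cref{lem:one_block_estimate}). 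For the same-site product $\xi^+_k\xi^-_k$ both factors are built from the \emph{same} pair $(r_{k+1},p_k)$, and the corresponding replacement would require controlling $r_{k+1}^2-(\overrightarrow{r}^{\ell}_{k+1})^2$ and $p_k^2-(\overrightarrow{p}^{\ell}_k)^2$ separately, which the exchange argument does not give. The paper's route avoids precisely this obstruction: \cref{lem:equipartition_energy} uses the Dynkin martingale for $r_{j+1}p_j$ to trade the diagonal quantity $c_2 r_{j+1}^2-p_j^2$ for the off-diagonal one $c_2 r_jr_{j+1}-p_jp_{j+1}$, after which the genuine nearest-neighbour identity $2(p_jp_{j+1}-c_2 r_jr_{j+1})=\xi^+_j\xi^-_{j+1}+\xi^-_j\xi^+_{j+1}$ feeds into \cref{lem:quadratic_field_with_wrong_velocity_cross_fields}. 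Without an analogue of that diagonal-to-off-diagonal step, your argument does not close.
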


The proof of \cref{lem-equipart} is postponed to \cref{sec:energy-fluctuations}.  
Now, we are left with the main term $W_j$ (see \eqref{eq:antisymmetric_current_dominant}), which in fact is written as the third term in \eqref{eq:martingale_decomposition} and this gives rise to the nonlinear term in the limiting equation as we explained in the beginning.

\subsection{Identification of linear terms}
Now, we are in a position to consider the contribution of $F^{\sigma,3}_j$.
Recalling the definition of $F^{\sigma,3}_j$, degree-one terms, i.e., terms proportional to $r_{j+1}^3$ and $r_j^3$ is dominant, so that the other term concerning $p_j\overline{r_{j+1}^2}$ is negligible (see \cref{lem:consequence_one_block_estimate}). 
In fact, we can show that the discrepancy between $r_j^3$ and $(c_2\beta)^{-1}r_j$ is negligible in view of \cref{lem:cubic_term_characterization}. 
Moreover, note that $r_{j+1}$ is replaced by $r_j$ with the help of summation-by-parts. 
Hence, the term governed by $F^{\sigma,3}_j$ can be rewritten as  
\begin{equation*}
\begin{aligned}
& \int_0^t \alpha n^{1/2}  \sum_{j\in\mathbb Z}
F^{\sigma,3}_j(s) \partial_x \varphi(\tfrac{j}{n}+v^\sigma_ns) ds \\
&\quad= \int_0^t \frac{\alpha}{n^{1/2}} \sum_{j\in\mathbb Z}
\frac{\sigma}{c_2} \Big(- \frac{c_4}{6}  
+ \frac{c_3^2}{12 c_2} \Big) \overline{r_j}(s) 
\partial_x \varphi(\tfrac{j}{n}+v^\sigma_ns) ds + \mathcal R^{\sigma,n}_t(\varphi)\\
&\quad= 
\int_0^t \frac{\alpha}{n^{1/2}} \sum_{j\in\mathbb Z} 
\frac{\sigma}{12 c_2^2} \Big(- 2 c_2 c_4
+ c_3^2 \Big) \overline{r_j}(s) 
\partial_x \varphi(\tfrac{j}{n}+v^\sigma_ns) ds + \mathcal R^{\sigma,n}_t(\varphi)
\end{aligned}
\end{equation*}
where the error term $\mathcal R^{\sigma,n}_t(\varphi)$ satisfies the bound
\begin{equation*}
\begin{aligned}
\mathbb E_n\bigg[\sup_{0\le t\le T}\Big| \mathcal R^{\sigma,n}_t(\varphi)\Big|^2 \bigg] 
\lesssim
\frac{C_T}{n}\left(\|\partial_x\varphi\|^2_{L^2(\mathbb R)}
+ \frac{T^2}{n^2} \|\partial_x^2 \varphi\|^2_{L^2(\mathbb R)}  \right).
\end{aligned}
\end{equation*}
Additionally, notice that 
\begin{equation*}
\frac{1}{2c_2^{3/2}}(\xi^+_j + \xi^-_j)
= \frac{1}{c_2} r_{j+1}. 
\end{equation*} 
Therefore, the computation for $F^{\sigma,3}_j$ becomes 
\begin{equation*}
\begin{aligned}
\int_0^t \alpha n^{1/2}\sum_{j\in\mathbb Z}
F^{\sigma,3}_j(s) \partial_x \varphi(\tfrac{j}{n}+v^\sigma_ns) ds 
&= -\int_0^t
\sigma \alpha
D_V
\Big( \mathcal X^{+,n}_s(\varphi_{v^\sigma_ns})
+  \mathcal X^{-,n}_s(\varphi_{v^\sigma_ns})\Big) ds + \mathcal R^{\sigma,n}_t(\varphi) .
\end{aligned}
\end{equation*}
Here, since one of the two modes $\xi^+_j$ and $\xi^-_j$ should be looked at in a wrong moving frame, due to the Riemann-Lebesgue lemma
(see \cref{lem:field_with_wrong_velocity}),
we have that in the last display survives in the limit 
only the mode $\sigma$ that we are concerned with.

Finally, let us combine the contribution from $F^{\sigma,0}_j$ and $II^\sigma$ - only those terms are left to bound, as a consequence of the preceding argument.   
Set 
\begin{equation*}
\tilde{II}^\sigma
= \alpha n^{1/2}  
\sum_{j\in\mathbb Z}
F^{\sigma,0}_j(\mathfrak r,\mathfrak p) \partial_x \varphi(\tfrac{j}{n}+v^\sigma_nt) .
\end{equation*}
Then, recalling the computation for $F^{\sigma,0}_j$, we have 
\begin{equation*}
\tilde{II}^\sigma
=\sigma c_2 \frac{\alpha}{n^{1/2}}
\sum_{j\in\mathbb Z} r_j \partial^2_x \varphi(\tfrac{j}{n}+v^\sigma_nt)  
+ O\Big(\frac{1}{n} \Big)
\end{equation*}
where we applied summation-by-parts and replaced the discrete derivative by the continuous one. 
Consequently, 
\begin{equation*}
\begin{aligned}
II^\sigma + \tilde{II}^\sigma
&=\frac{\alpha}{n^{1/2}}
\sum_{j\in\mathbb Z} \Big( \frac{1}{2}J^{\tilde{\xi}^\sigma}_{j,j+1} 
+\sigma c_2r_j \Big) 
\partial_x^2 \varphi(\tfrac{j}{n}+v^\sigma_nt)  
+ O(1/n) \\
&= \frac{\alpha}{n^{1/2}} \sigma\frac{\sqrt{c_2}}{2}
\sum_{j\in\mathbb Z} \big(\sqrt{c_2} r_j - \sigma p_j \big)  
\partial_x^2 \varphi(\tfrac{j}{n}+v^\sigma_nt)  
+ O(n^{-1/2} ) .
\end{aligned}
\end{equation*}
However, the first term in the utmost right-hand side of the last display is nothing but the fluctuation field of each phonon mode $\sigma$, which is looked at a wrong frame. 
Again, according to the Riemann-Lebesgue lemma (see \cref{lem:field_with_wrong_velocity}), we can see that this field does not survive in the limit. 

\subsection{Identification of viscosity terms} 
Here let us compute the action of the symmetric part of the generator to show \eqref{eq:martingale_decomposition}. 
Recalling the definition of the operator $S_n$, we can easily see that 
\begin{equation*}
S_n \tilde{\mathcal X}^{\sigma,n}_t(\varphi_{v^\sigma_nt})
= \frac{\gamma}{2n^{1/2}} \sum_{j\in\mathbb Z} 
\Big( 
\sigma p_j 
+ 
\mathfrak u \varepsilon_n \frac{p_j^2}{2}
\Big)
\Delta^n\varphi(\tfrac{j}{n}+v^\sigma_nt)
\end{equation*}
where $\Delta^n$ is the one-dimensional Laplacian defined in \eqref{eq:definition_discrete_derivative}. 
Note that 
\begin{equation}
\label{eq:symmetric_part_identity}
\begin{aligned}
2 p_j = \tilde{\xi}^+_j -  \tilde{\xi}^-_j 
\end{aligned}
\end{equation}
Then, we apply Lemma \ref{lem:field_with_wrong_velocity} to know that the field with a wrong frame does not affect the limit.
In particular, one of the first and second terms in the
utmost right-hand side of \eqref{eq:symmetric_part_identity} is negligible.
Moreover, as a consequence of a crude $L^2$-estimate, we have that 
\begin{equation*}
\mathbb{E}_n \bigg[ \sup_{0\le t\le T} \bigg|\int_0^t 
\frac{\varepsilon_n}{\sqrt{n}}\sum_{j\in\mathbb Z} 
\frac{p_j(s)^2}{2} \Delta^n\varphi(\tfrac{j}{n}+v^\sigma_ns) ds \bigg|^2 \bigg]
\lesssim \varepsilon_n^2 \| \partial_x^2 \varphi \|^2_{L^2(\mathbb R)} 
\end{equation*}
for any $v\in\mathbb R$ and thus the corresponding  term does not affect the limit.
Additionally, we can easily show that the discrepancy between discrete and continuous Laplacian is estimated by $O(n^{-2})$, and thus we conclude here that 
\begin{equation*}
\int_0^t S_n\tilde{\mathcal X}^{\sigma,n}_s(\varphi_{v^\sigma_ns})ds 
= \frac{\gamma}{4}\int_0^t 
\mathcal X^{\sigma,n}_s(\varphi_{v^\sigma_ns})ds 
+ O( \varepsilon_n). 
\end{equation*}
By this line, we complete the proof of \cref{lem:martingale_decomposition_main}. 


\section{Equipartition of Energy} 
\label{sec:energy-fluctuations}

Here let us give a proof of \cref{lem-equipart}. 
In the following we define $\varphi(j, t) = \varphi(\tfrac{j}{n}+v^\sigma_nt)$.
The following results are independent of $\sigma = \pm 1$, so that we omit the mode dependence.

\begin{lemma}
\label{lem:equipartition_energy}
We have that 
\begin{equation*}
\begin{aligned}
& \mathbb E_n \bigg[ \sup_{0\le t\le T} \bigg|
\int_0^t \sum_{j\in\mathbb Z} \big(c_2 r_{j+1}(r_{j+1}-r_{j}) +p_{j}(p_{j+1}-p_{j}) \big)(s) \varphi(j, s)  ds \bigg|^2
\bigg] \\
&\quad \lesssim \frac{1}{n}
\left( \|\varphi \|^2_{L^2(\mathbb R)} + \| \partial_x \varphi \|^2_{L^2(\mathbb R)}\right).
\end{aligned}
\end{equation*}
\end{lemma}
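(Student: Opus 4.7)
The plan is to recognize that $c_2 r_{j+1}(r_{j+1}-r_j) + p_j(p_{j+1}-p_j)$ coincides with $A(p_j r_{j+1})$ up to an error of order $\varepsilon_n$. Indeed, a direct computation yields
\[
A(p_j r_{j+1}) = r_{j+1}\bigl(V'_n(r_{j+1}) - V'_n(r_j)\bigr) + p_j(p_{j+1}-p_j),
\]
and the Taylor expansion of $V'_n$ gives
\[
c_2 r_{j+1}(r_{j+1}-r_j) + p_j(p_{j+1}-p_j) = A(p_j r_{j+1}) - E_j, \quad E_j = \tfrac{c_3 \varepsilon_n}{2}r_{j+1}(r_{j+1}^2 - r_j^2) + O(\varepsilon_n^2).
\]
Setting $\mathcal F_s(\mathfrak r,\mathfrak p) = \sum_{j} p_j r_{j+1}\varphi(j,s)$, the sum in the integrand of the lemma equals $A\mathcal F_s - \sum_j E_j \varphi(j,s)$.

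Next, I would exploit the identity $\alpha A = L - S$ together with Dynkin's formula for the accelerated generator $L_n = n^2 L$ to write
\[
\int_0^t A\mathcal F_s\,ds = \frac{1}{\alpha n^2}\Big(\mathcal F_t - \mathcal F_0 - \int_0^t \partial_s \mathcal F_s\,ds - \mathcal M(t)\Big) - \frac{1}{\alpha}\int_0^t S\mathcal F_s\,ds,
\]
where $\mathcal M$ is the Dynkin martingale, whose predictable quadratic variation is $n^2\int_0^t \Gamma_S(\mathcal F_s)\,ds$. Because $\nu_n$ is a product measure with $E_{\nu_n}[p_j]=0$ and uniformly bounded moments (\cref{lem:static_estimate}), one obtains $E_{\nu_n}[\mathcal F_s^2]\lesssim n\|\varphi\|_{L^2(\mathbb R)}^2$ and $E_{\nu_n}[\Gamma_S(\mathcal F_s)]\lesssim n\|\varphi\|_{L^2(\mathbb R)}^2$ uniformly in $s$. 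Moreover, $\partial_s\mathcal F_s = v^\sigma_n\sum_j p_j r_{j+1}\partial_x\varphi(j,s)$ with $|v^\sigma_n|=\sqrt{c_2}\alpha n$ gives $E_{\nu_n}[(\partial_s\mathcal F_s)^2]\lesssim n^3\|\partial_x\varphi\|_{L^2(\mathbb R)}^2$. Combining these estimates with Doob's maximal inequality applied to $\mathcal M$ and Cauchy-Schwarz in time for the explicit time-derivative, the prefactor $n^{-2}$ forces the first group of terms to contribute $O(C_T/n)\cdot(\|\varphi\|_{L^2(\mathbb R)}^2 + \|\partial_x\varphi\|_{L^2(\mathbb R)}^2)$.

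The surviving integral $\int_0^t S\mathcal F_s\,ds$ is controlled by the Kipnis-Varadhan bound of \cref{prop:kipnis_varadhan_estiamte}, applied with the scaling appropriate to $L_n$: Cauchy-Schwarz in the Dirichlet form of $S$ yields $\|S\mathcal F_s\|_{-1}^2 \le \|\mathcal F_s\|_1^2 \lesssim n\|\varphi\|_{L^2(\mathbb R)}^2$, from which the resulting bound is again of order $(T/n)\|\varphi\|_{L^2(\mathbb R)}^2$. The residual error $\sum_j E_j \varphi(j,s)$ is handled through the identity $r_{j+1}(r_{j+1}^2 - r_j^2) = (r_{j+1}^3 - r_j^3) - r_j^2(r_{j+1}-r_j)$: the first piece telescopes under $\sum_j \cdot\,\varphi(j,s)$ and acquires an extra factor $n^{-1}$ by summation by parts, while the second, after recentering, is controlled by a further application of the Kipnis-Varadhan inequality. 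Together with the prefactor $\varepsilon_n = n^{-1/2}$, this produces the sought $n^{-1/2}$ estimate.

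\emph{Main obstacle.} The inequality $\mathbb E_n[\sup_{0\le t\le T}\mathcal F_t^2]\lesssim C_T\, n\|\varphi\|_{L^2(\mathbb R)}^2$, which I need in order to absorb the boundary contribution of $\mathcal F_t - \mathcal F_0$, does not follow from stationarity alone: stationarity yields only a bound at fixed $t$. To upgrade this to a uniform-in-time estimate I intend to invoke Doob's maximal inequality on the martingale part of $\mathcal F_t$'s own Dynkin decomposition, combined with a time-partitioning argument that keeps the sup-norm inflation polynomial in $T$. A parallel subtlety is that $r_{j+1}(r_{j+1}^2 - r_j^2)$ is not purely a discrete gradient, so extracting the additional $n^{-1/2}$ from the $\varepsilon_n$-prefactor requires one more round of Dynkin together with summation by parts in the error analysis.
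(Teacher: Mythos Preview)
Your strategy is the same as the paper's: apply Dynkin's formula to $\sum_j p_j r_{j+1}\varphi(j,s)$, then estimate boundary, martingale, time-derivative, symmetric, and error pieces separately. Two points deserve correction.

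\medskip
\textbf{The sup bound on $\mathcal F_t$.} Your target $\mathbb E_n[\sup_{t\le T}\mathcal F_t^2]\lesssim C_T n\|\varphi\|_{L^2}^2$ is too optimistic. Carrying out the Dynkin decomposition of $\mathcal F_t$ that you propose, the dominant contributions are the martingale (Doob gives $n^2\int_0^T E_{\nu_n}[\Gamma_S(\mathcal F_s)]\,ds\lesssim Tn^3$) and the time-derivative ($|v_n^\sigma|^2 T^2 n\|\partial_x\varphi\|^2\lesssim T^2 n^3$); the drift $n^2 L\mathcal F_s$ itself needs a further $H^{-1}$/summation-by-parts argument to be kept at $O(n^3)$. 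You end up with $C_T n^3$, which after the prefactor $n^{-4}$ is still $O(n^{-1})$. This is exactly the content of the paper's \cref{lem:gen_bound} (stated there for $p_jr_{j+1}^2$, but the same proof works for $p_jr_{j+1}$). No time-partitioning is needed.

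\medskip
\textbf{The genuine gap: the cubic error term.} Your claim that $r_j^2(r_{j+1}-r_j)$, after recentering, is controlled by the Kipnis--Varadhan inequality is incorrect. The noise $S$ acts only on the momenta, so any function of $\mathfrak r$ alone has $\|\cdot\|_1=0$ and hence $\|\cdot\|_{-1}=+\infty$; \cref{prop:kipnis_varadhan_estiamte} gives nothing here. This is precisely the difficulty created by the degeneracy of the noise, and it is why ``one more round of Dynkin'' is not optional but essential: one must use the Hamiltonian identity $c_2(r_{k+1}-r_k)=Ap_k+O(\varepsilon_n)$ to trade position gradients for $Ap_k$, write $g\cdot Ap_k = L(gp_k)-S(gp_k)-p_k\,Ag$, and only then can the $H^{-1}$ bound be invoked (on the pieces now containing momentum differences). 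The paper implements this via \cref{lem:cubic_term_characterization} and \cref{lem:rr}, using a slightly different splitting
\[
\sum_j r_{j+1}(r_{j+1}^2-r_j^2)\varphi(j,s)
=\sum_j\bigl(r_j^3-E_{\nu_n}[r_j^2]\,r_j\bigr)\varphi(j-1,s)-\sum_j r_{j+1}\overline{r_j^2}\,\varphi(j,s),
\]
but your decomposition would work too once you replace the direct $H^{-1}$ step by this Dynkin detour. The resulting bounds are of size $C_T$ (not $C_T/n$), and the extra factor $n^{-1}$ comes entirely from $\varepsilon_n^2$.
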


\begin{proof}
Note that
\begin{equation}
\label{eq:generator_action_product_rp}
\begin{aligned}
L(r_{j+1} p_{j})
&= 
\frac{\gamma}{2}(\Delta p_{j})r_{j+1} 
+ \alpha
\big[r_{j+1}(V_n'(r_{j+1})-V_n'(r_j))   
+p_{j}(p_{j+1}-p_{j}) \big]  \\
&=  \frac{\gamma}{2}(\Delta p_{j})r_{j+1} 
+\alpha
\big(c_2r_{j+1}(r_{j+1}-r_{j})  
+p_{j}(p_{j+1}-p_{j}) \big)  \\
&\quad+ \frac{c_3}{2}r_{j+1}(r_{j+1}^2 - r_{j}^2)\varepsilon_n 
+ O(\varepsilon_n^2).
\end{aligned}
\end{equation}
Set 
\begin{equation*}
Y_t(\varphi)
= \frac{1}{ n^2} \sum_{j\in\mathbb Z}r_{j+1}(t)p_j(t) \varphi(j, t) .  
\end{equation*}
Observe that
\begin{equation}
\label{eq:SY}
n^2S Y_t(\varphi) = \frac{\gamma}{2} \sum_{j\in\mathbb Z}
(\Delta p_{j}(t)) r_{j+1}(s) \varphi(j, t). 
\end{equation}
Then, by Dynkin's martingale formula applied to this fluctuation field, we have that 
\begin{equation}
\label{eq:equipartition_energy_martingale}
\begin{aligned}
\mathfrak{M}^Y_t(\varphi)
&=Y_t (\varphi)
- Y_0(\varphi) 
-\int_0^t(\partial_s+ n^2 L)Y_s(\varphi)ds\\
&= Y_t(\varphi) -Y_0(\varphi) 
- \int_0^t  n^2S Y_s(\varphi) ds \\
&\quad- \int_0^t \sum_{j\in\mathbb Z} \alpha \big(r_{j+1}(r_{j+1}-r_{j})+p_{j}(p_{j+1}-p_{j}) \big)(s)
\varphi(j, s)  ds \\
&\quad- \frac{\alpha c_3}{2}\varepsilon_n 
\int_0^t \sum_{j\in\mathbb Z}\big(r_{j+1}(r_{j+1}^2-r_j^2)\big)(s) \varphi(j, s)   ds  \\
&\quad -\frac{ v^\sigma_n}{ n^2} \int_0^t\sum_{j\in\mathbb Z} \big(r_{j+1}p_j\big)(s) (\partial_x \varphi) (j, s)   ds 
+ R^n_t(\varphi)
\end{aligned}
\end{equation}
is a mean-zero martingale with respect to the natural filtration. 
Above, the reminder term $R^n_t(\varphi)$ is coming from the error term
$O(\varepsilon_n^2)$
in \eqref{eq:generator_action_product_rp}, and this satisfies the following bound: 
\begin{equation*}
\mathbb E_n\bigg[\sup_{0\le t\le T} 
\big| R^n_t(\varphi)\big|^2\bigg] 
\lesssim T^2\varepsilon_n^4n \|\varphi\|^2_{L^2(\mathbb R)} = \frac{T^2}{ n} \|\varphi\|^2_{L^2(\mathbb R)} 
\end{equation*}
where we used the Cauchy-Schwarz inequality to obtain the last bound. 
Here, note that the quadratic variation of $\mathfrak M^Y_t(\varphi)$ is computed as 
\begin{equation*}
\begin{aligned}
\langle \mathfrak M^Y (\varphi) \rangle_t 
&= \frac{\gamma}{2 n^2} \int_0^t \sum_{j\in\mathbb Z} r_{j+1}^2(p_j-p_{j+1})^2
\big( \nabla^n \varphi (j,s)\big)^2 ds. 
\end{aligned}
\end{equation*}
Hence by Doob's inequality, we have that 
\begin{equation*}
\mathbb E_n\Big[\sup_{0\le t\le T} \mathfrak M^Y_t(\varphi)^2 \Big] 
\le 4\mathbb E_n\big[\mathfrak M^Y_T(\varphi)^2 \big]
\lesssim \frac{T}{ n} \| \partial_x \varphi \|^2_{L^2(\mathbb R)} .
\end{equation*}
On the other hand, as a consequence of the bound \eqref{eq:h-1}, 
\begin{equation*}
\begin{aligned}
\mathbb E_n \bigg[\sup_{0\le t\le T} \bigg| 
\int_0^t n^2 S Y_s(\varphi) ds \bigg|^2\; \bigg] 
\le 24 n^2 \int_0^T \mathbb E\left[ Y_s(\varphi) S Y_s(\varphi)\right]  ds
\lesssim \frac{T}{n}  \|\varphi\|^2_{L^2(\mathbb R)} 
\end{aligned}
\end{equation*}
and by the Cauchy-Schwarz inequality
\begin{equation*}
\begin{aligned}
\mathbb E_n \bigg[\sup_{0\le t\le T} \bigg| 
\frac{v^\sigma_n}{n^2} \int_0^t\sum_{j\in\mathbb Z} \big(r_{j+1}p_j\big)(s)
(\partial_x \varphi)(j,s) ds \bigg|^2 \bigg]
\lesssim \frac{T^2  c_2^2\alpha^2}{n} \|\partial_x \varphi \|^2_{L^2(\mathbb R)} .
\end{aligned}
\end{equation*}
Moreover, by Lemma \ref{lem:gen_bound},  
\begin{equation*}
\begin{aligned}
\mathbb E_n\Big[\sup_{0\le t\le T} Y_t(\varphi)^2 \Big] 
\lesssim \frac{1}{n^4}
T^2 n^3 \| \partial_x \varphi \|^2_{L^2(\mathbb R)} = \frac{T^2}{n} \| \partial_x \varphi \|^2_{L^2(\mathbb R)}. 
\end{aligned}
\end{equation*}
Finally, we would like to handle the penultimate term in the utmost right-hand side of \eqref{eq:equipartition_energy_martingale}. 
To that end, note that 
\begin{equation*}
\begin{aligned}
\sum_{j\in\mathbb Z}r_{j+1}(r_{j+1}^2 - r_j^2)
\varphi (j,t)
&= \sum_{j\in\mathbb Z} \big(r_j^3 - E_{\nu_n}[r_j^2]r_j\big) \varphi(j-1,t) -\sum_{j\in\mathbb Z} r_{j+1}\overline{r_j^2} 
\varphi_{j} (t). 
\end{aligned}
\end{equation*}
Note that the first (resp. second) term of the right-hand side of the last display can be estimated by \cref{lem:cubic_term_characterization} (resp. \cref{lem:consequence_one_block_estimate}).
Consequently, we have the following bound: 
\begin{equation*}
\mathbb E_n\bigg[ \sup_{0\le t\le T} \bigg|
\varepsilon_n 
\int_0^t \sum_{j\in\mathbb Z}\big(r_{j+1}(r_{j+1}^2-r_j^2)\big)(s) \varphi(j,s) ds 
\bigg|^2 \bigg]
\lesssim \frac{C_T}{n} \left(\| \varphi\|^2_{L^2(\mathbb R)}  +
\|\partial_x \varphi\|^2_{L^2(\mathbb R)} \right)
\end{equation*}
where note that $E_{\nu_n}[r_j^2]=1/(\beta c_2)+ O(\varepsilon_n)$. 
Now, recalling the martingale decomposition \eqref{eq:equipartition_energy_martingale}, we complete the proof by combining all the above estimates.
\end{proof}

\begin{lemma}
\label{lem:cubic_term_characterization}
We have that 
\begin{equation*}
\mathbb E_n\bigg[\sup_{0\le t\le T}
\bigg| \int_0^t \sum_{j\in\mathbb Z} \Big(r_j^3-\frac{1}{ \beta c_2}r_j\Big)(s)
\varphi(j,s) ds \bigg|^2 \bigg]
\lesssim C_T \left( \|\varphi \|^2_{L^2(\mathbb R)} + \| \partial_x \varphi \|^2_{L^2(\mathbb R)}\right).
\end{equation*}
\end{lemma}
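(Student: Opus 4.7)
Following the Dynkin martingale strategy used for \cref{lem:equipartition_energy}, I will introduce an auxiliary field whose Hamiltonian action directly produces the cubic term. A natural choice is
\begin{equation*}
Y_t(\varphi) = -\frac{1}{c_2 n^2} \sum_{j\in\mathbb Z} p_j(t)\, r_j^2(t)\,\varphi(j,t),
\end{equation*}
and a direct computation using $A p_j = V_n'(r_{j+1}) - V_n'(r_j)$, $A r_j^2 = 2 r_j(p_j - p_{j-1})$, together with the Taylor expansion $V_n'(r) = c_2 r + O(\varepsilon_n r^2)$, yields
\begin{equation*}
-A(p_j r_j^2) = c_2\bigl(r_j^3 - r_{j+1} r_j^2\bigr) - 2 p_j r_j(p_j - p_{j-1}) + O(\varepsilon_n),
\end{equation*}
which is the source of the cubic term in the martingale decomposition obtained from Dynkin's formula applied to $Y_t(\varphi)$ with generator $n^2 L$.

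To isolate the exact combination $r_j^3 - (\beta c_2)^{-1} r_j$, I will write $r_j^2 = E_{\nu_n}[r_j^2] + \overline{r_j^2}$ using $E_{\nu_n}[r_j^2] = (\beta c_2)^{-1} + O(\varepsilon_n^2)$, and $p_j^2 = \beta^{-1} + \overline{p_j^2}$, and then use the index shift $\sum_j r_{j+1}\varphi(j) = \sum_j r_j\varphi(j) - n^{-1}\sum_j r_j\partial_x\varphi(j) + O(n^{-2})$ to push all discrete translations into derivatives of $\varphi$. This produces the desired cubic combination on the right-hand side modulo three centered one-block sums $\sum_j \overline{r_j^2}\, r_{j+1}\varphi$, $\sum_j \overline{p_j^2}\, r_j\varphi$, and $\sum_j p_j p_{j-1} r_j\varphi$, all of which fall into the scope of \cref{lem:consequence_one_block_estimate}. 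Each remaining piece of the Dynkin decomposition is then bounded by standard tools: the boundary contributions $Y_t - Y_0$ via the moment bound \eqref{eq:uniform_moment_bound} and the independence of the Gibbs marginals; the Dynkin martingale's quadratic variation by Doob's inequality applied to the $S$-carr\'e du champs of $p_j r_j^2$, yielding a contribution $\lesssim T\|\partial_x\varphi\|_{L^2(\mathbb R)}^2/n$; the moving-frame derivative $\partial_s Y_s$, which carries the velocity $v_n^\sigma \sim n$, by Cauchy-Schwarz in time as in \eqref{eq:equipartition_energy_martingale}; the symmetric-part contribution $n^2 S Y_s$ by the Kipnis-Varadhan estimate \eqref{eq:h-1}; and the $O(\varepsilon_n)$ remainder from higher-order terms of $V_n'$ exactly as in \eqref{eq:err}.

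The main obstacle will be the bookkeeping of the linear-in-$r_j$ contributions emerging from the various mean extractions and shifts. A naive count produces a coefficient $3(\beta c_2)^{-1}$ in front of $r_j$ on the right-hand side (matching the third Hermite polynomial $r_j^3 - 3\sigma^2 r_j$ with $\sigma^2 = (\beta c_2)^{-1}$) rather than the $(\beta c_2)^{-1}$ claimed in the statement. A careful accounting is thus required: the excess $2(\beta c_2)^{-1}\sum_j r_j\varphi$ must be absorbed either via an additional correction to the auxiliary of the form $a\, n^{-2}\sum_j p_j\varphi(j)$, chosen so that its action under $A$ cancels the mismatch through the gradient $V_n'(r_{j+1}) - V_n'(r_j)$, or by identifying it with a fluctuation field of the conserved density $\sum_j r_j$ whose time-integrated behavior is diffusive in the moving frame. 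Once this matching is performed, the remainder of the argument runs in parallel with the proof of \cref{lem:equipartition_energy}, and the final bound assembles by combining the estimates listed above.
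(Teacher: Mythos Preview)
Your Dynkin-martingale approach with auxiliary $p_j r_j^2$ is essentially the paper's own proof, which uses $r_{j+1}^2 p_j$ (the index shift is immaterial). The paper's identity reads
\[
A(r_{j+1}^2 p_j)=c_2\Bigl(r_j^3-\tfrac{1}{\beta c_2}r_j\Bigr)+c_2(r_{j+1}^3-r_j^3)-c_2\,\overline{r_{j+1}^2}\,r_j+2p_jr_{j+1}(p_{j+1}-p_j)+O(\varepsilon_n),
\]
and the coefficient $(\beta c_2)^{-1}$ appears because the paper centers only the $r_{j+1}^2$ factor and leaves $2p_jr_{j+1}(p_{j+1}-p_j)$ \emph{unexpanded}, claiming to bound it (together with $\overline{r_{j+1}^2}\,r_j$) by invoking \cref{lem:rr}. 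Your extra step---expanding the analogous momentum term and extracting $\tfrac{2}{\beta}r_j$ from $2r_jp_j^2$---is exactly what produces the Hermite coefficient $3(\beta c_2)^{-1}$ and the mismatch you flag.

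Neither of your proposed repairs closes that gap. For (a), applying $n^2A$ to the corrector $a\,n^{-2}\sum_jp_j\varphi_j$ yields $a\sum_j\nabla^+V_n'(r_j)\varphi_j=-\tfrac{a}{n}\sum_jV_n'(r_j)\nabla^n\varphi_j$; a discrete gradient in $r$ is one full power of $n$ too small to cancel a bulk sum $\sum_jr_j\varphi_j$, so no choice of $a$ works. For (b), since $\overline r_{j+1}=(2\sqrt{c_2})^{-1}(\overline\xi^+_j+\overline\xi^-_j)$, the integral $\int_0^t\sum_j\overline r_j\,\varphi(j,s)\,ds$ equals $\sqrt n$ times the time-integrated phonon fluctuation field; in the correct moving frame that field is $O(1)$ in $L^2$, so the whole expression is genuinely of order $\sqrt n$ and cannot be absorbed into an $O(1)$ right-hand side. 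The paper's route is thus to avoid this Wick decomposition altogether and treat $2p_jr_{j+1}(p_{j+1}-p_j)$ as a single block via a \cref{lem:rr}-style argument (block-average $r_{j+1}$, then use $c_2(r_{k+1}-r_k)=Ap_k+O(\varepsilon_n)$ on the increments). Your instinct that something delicate is going on is not misplaced, however: since $p_j(p_{j+1}-p_j)$ has mean $-\beta^{-1}$, the averaged-part estimate in such an argument is not the straightforward one of \cref{lem:rr}, and the paper's one-line citation here glosses over precisely the point you isolated.
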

\begin{proof}
Note that we have the identity 
\begin{equation*}
\begin{aligned}
A(r_{j+1}^2p_j)
&=r_{j+1}^2 \big(V'_n(r_{j+1})-V'_n(r_j)\big)
+ 2r_{j+1}(p_{j+1}-p_j)p_j \\
&= r_{j+1}^2 c_2 (r_{j+1}-r_j) 
+ 2 p_j r_{j+1}(p_{j+1}-p_j)
+ O(\varepsilon_n)\\
&= c_2 r_j^3 - \frac{1}{\beta} r_j 
+ c_2 (r_{j+1}^3- r_j^3) 
+ c_2 \overline{r_{j+1}^2}r_j 
+2 p_j  r_{j+1} (p_{j+1}-p_j)
+ O(\varepsilon_n)
\end{aligned}
\end{equation*}
where we used the fact that $E_{\nu_n}[r_j^2]= 1/(\beta c_2) +O(\varepsilon_n)$. 
On the other hand, note that  
\begin{equation*}
S(r_{j+1}^2p_j)
=
\frac{\gamma}{2}r_{j+1}^2 \Delta p_j.
\end{equation*}
Therefore, 
\begin{equation}
\label{eq:cubic_term_computation}
\begin{aligned}
& \int_0^t \sum_{j\in\mathbb Z} \Big(r_j^3-\frac{1}{\beta c_2}r_j\Big)(s)  \varphi(j,s)\; ds \\
&\quad=\frac{1}{\alpha}\int_0^t \sum_{j\in\mathbb Z}  L (r_{j+1}^2p_j)(s)
\varphi(j,s) ds \\
&\qquad-\int_0^t \sum_{j\in\mathbb Z} \big[ (r_{j+1}^3- r_j^3) 
+ \overline{r_{j+1}^2}r_j 
+ \frac{2}{c_2} r_{j+1}(p_{j+1}-p_j)p_j \big](s)
\varphi(j,s) ds\\
&\qquad- \frac{1}{\alpha} \int_0^t \sum_{j\in\mathbb Z} 
\Big[
\frac{\gamma}{2}r_{j+1}^2 \Delta p_j\Big] (s) 
\varphi(j,s) ds + R^n_t
\end{aligned}
\end{equation} 
where the reminder term $R^n_t$ is such that 
\begin{equation*}
\mathbb E_n\Big[\sup_{0\le t\le T} | R^n_t|^2 \Big]
\lesssim T^2 \varepsilon_n^2 n \|\varphi\|^2_{L^2(\mathbb R)} =   T^2 \|\varphi\|^2_{L^2(\mathbb R)}.
\end{equation*}
Let us estimate the second term of the right-hand side of \eqref{eq:cubic_term_computation}. 
The term
\begin{equation*}
-  \sum_{j\in\mathbb Z}  (r_{j+1}^3- r_j^3)
\varphi(j,s) 
\end{equation*}
is negligible by summation by parts. 
For the rest 
\begin{equation*}
\sum_{j\in\mathbb Z} \overline{r_{j+1}^2}r_j \varphi(j,s), \quad 
\sum_{j\in\mathbb Z} 2p_j r_{j+1}(p_{j+1}-p_j) \varphi(j,s), 
\end{equation*}
we use Lemma \ref{lem:rr} to obtain the desired bound.  
Hence, we have that 
\begin{equation*}
\begin{aligned}
&\mathbb E_n\bigg[\sup_{0\le t\le T} 
\bigg| \int_0^t \sum_{j\in\mathbb Z} \Big(r_j^3-\frac{1}{c_2\beta} r_j\Big)(s) \varphi(j,s) ds \bigg|^2\bigg] \\
&\quad\lesssim 
\mathbb E_n\bigg[\sup_{0\le t\le T} 
\bigg| \frac{1}{\alpha }\int_0^t \sum_{j\in\mathbb Z}  L(r_{j+1}^2p_j)(s) \varphi(j,s) ds \bigg|^2\bigg] \\
&\qquad+ \frac{T^2}{n}\| \partial_x\varphi\|^2_{L^2(\mathbb R)}
+ T\| \varphi\|^2_{L^2(\mathbb R)}
+ \frac{T^2}{\alpha^2 n^3}\| \partial_x^2\varphi\|^2_{L^2(\mathbb R)} 
+ T^2\varepsilon_n^2n\| \varphi\|^2_{L^2(\mathbb R)}\\
&\quad\lesssim \mathbb E_n\bigg[\sup_{0\le t\le T} 
\bigg| \frac{1}{\alpha }\int_0^t \sum_{j\in\mathbb Z}  L(r_{j+1}^2p_j)(s) \varphi(j,s) ds \bigg|^2\bigg] +
C_T \left( \|\varphi \|^2_{L^2(\mathbb R)} + \| \partial_x \varphi \|^2_{L^2(\mathbb R)}\right).
\end{aligned}
\end{equation*}
from a crude $L^2$-estimate, \cref{prop:kipnis_varadhan_estiamte} 
and \cref{lem:consequence_one_block_estimate}. 
On the other hand, define
\begin{equation*}
\begin{aligned}
\frac 1{n^2} \mathfrak m^n_t(\varphi)
&\coloneqq \frac 1{n^2}\sum_{j\in\mathbb Z} r_{j+1}(t)^2p_j(t) \varphi(j,t)
- \frac 1{n^2}\sum_{j\in\mathbb Z} r_{j+1}(0)^2p_j(0)\varphi(j,0) \\
&\quad- \int_0^t \sum_{j\in\mathbb Z} 
\Big( \frac 1{n^2}\partial_s + L\Big) r_{j+1}(s)^2p_j(s) 
\varphi(j,s) ds ,
\end{aligned}
\end{equation*}
where $  \mathfrak m^n_t(\varphi) $
is a martingale with respect to the natural filtration, by Dynkin's martingale formula.
By Lemma \ref{lem:gen_bound} we have
\begin{equation*}
\mathbb E_n\bigg[\sup_{0\le t\le T} 
\Big| \sum_{j\in\mathbb Z} r_{j+1}(t)^2p_j(t) \varphi_{j} (t)\Big|^2 \bigg]
\lesssim 
n^3 \left( \|\varphi \|^2_{L^2(\mathbb R)} + \| \partial_x \varphi \|^2_{L^2(\mathbb R)}\right).
\end{equation*}
Additionally, note that 
\begin{equation*}
\mathbb E_n\bigg[\sup_{0\le t\le T} 
| \mathfrak m^n_t(\varphi)|^2 \bigg]
\le 4 \mathbb E_n\big[\langle \mathfrak m^n(\varphi)\rangle_T\big]
\lesssim n\| \partial_x\varphi\|^2_{L^2(\mathbb R)}
\end{equation*}
by Doob's inequality, and that 
\begin{equation*}
\begin{split}
&\mathbb E_n\bigg[\sup_{0\le t\le T} \bigg| 
\int_0^t \sum_{j\in\mathbb Z}
r_{j+1}(s)^2 p_j(s) \left(\partial_s \varphi (j,s)\right) ds \bigg|^2 \bigg] \\
&= \mathbb E_n\bigg[\sup_{0\le t\le T} \bigg| 
\int_0^t v^\sigma_n \sum_{j\in\mathbb Z}
r_{j+1}(s)^2 p_j(s)(\partial_x \varphi)(j,s) ds \bigg|^2 \bigg]
\lesssim C n^3\| \partial_x \varphi\|^2_{L^2(\mathbb R)}. 
\end{split}
\end{equation*}
We conclude the proof by combining the above estimates altogether. 
\end{proof}

Now, we are in a position to give a proof of equipartition of the energy (\cref{lem-equipart}).

\begin{proof}[Proof of \cref{lem-equipart}] 
In view of \cref{lem:equipartition_energy}, it suffices to show the following: 
\begin{equation}\label{eq:equip}
\limsup_{n\to\infty}
\mathbb E_n \bigg[ \sup_{0\le t\le T} \bigg|
\int_0^t \sum_{j\in\mathbb Z} \big(p_jp_{j+1}
- c_2 r_jr_{j+1}\big)(s) \varphi(j,s) ds \bigg|^2 \bigg] 
= 0. 
\end{equation}
However, we notice that 
\begin{equation*}
\begin{aligned}
2(p_jp_{j+1}
- c_2 r_jr_{j+1})
= \xi^+_j\xi^-_{j+1} + \xi^-_j\xi^+_{j+1}. 
\end{aligned}
\end{equation*}
Then \eqref{eq:equip} is a direct consequence of \cref{lem:quadratic_field_with_wrong_velocity_cross_fields}, which we will show later. 
\end{proof}

\section{The second-order Boltzmann-Gibbs principle}
\label{sec:2bg} 

To show the tightness and identify the limits in the decomposition \eqref{eq:martingale_decomposition}, we use the following replacement argument by local averages. 
For each $\ell\in\mathbb N$ and for any sequence of random variables $g=(g_j)_{j\in\mathbb Z}$, let 
\begin{equation*}
\overrightarrow g^\ell_j
= \frac{1}{\ell}\sum_{i=0}^{\ell-1}
\overline g_{j+i}
\end{equation*}
where recall that $\overline g_j=g_j-E_{\nu_n}[g_j]$ denotes the centered variable. 
Moreover, recall that we introduced the notation $\varphi(j, t) = \varphi(\tfrac{j}{n}+v^\sigma_nt)$. 

\begin{proposition}[The second-order Boltzmann-Gibbs principle]
\label{prop:2bg}
For any $\ell\in\mathbb N$ and for any $\varphi\in\mathcal S(\mathbb R)$, we have that 
\begin{equation}
\label{eq:2bg_estimate}
\begin{aligned}
& \mathbb E_n \bigg[ \sup_{0\le t\le T} \bigg|\int_0^t \sum_{j\in\mathbb Z}
\Big( p_j(s) \overline{r}_{j+1}(s)-
\overrightarrow{p}_{j}^\ell(s) \overrightarrow{r}_{j}^\ell(s)
\Big) \varphi(j,s) ds \bigg|^2 \bigg] \\
&\quad\lesssim T\Big(\frac{\ell}{n} + \frac{1}{\ell} \Big)  
\| \varphi\|^2_{L^2(\mathbb R)} + T^2 \frac{1}{n}\| \varphi'\|^2_{L^2(\mathbb R)}.
\end{aligned}
\end{equation}
\end{proposition}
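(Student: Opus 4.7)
The plan is to apply Proposition~\ref{prop:kipnis_varadhan_estiamte} to the mean-zero field
\[
F(s) = \sum_{j\in\mathbb Z}\big(p_j\overline r_{j+1}-\overrightarrow p_j^\ell\,\overrightarrow r_j^\ell\big)\varphi(j,s),
\]
whose $\nu_n$-expectation vanishes because $p_j$ and $r_{j+1}$ are independent under the product measure. The whole problem reduces to estimating $\int_0^T\|F(s)\|_{-1}^2\,ds$ with respect to the noise $S$, keeping in mind that because of $L_n=n^2L$ the relevant $\|\cdot\|_{-1}$ norm scales as $n^{-2}$ times the microscopic one.

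\textbf{Decomposition.} I split
\[
p_j\overline r_{j+1}-\overrightarrow p_j^\ell\,\overrightarrow r_j^\ell
= \overline r_{j+1}\big(p_j-\overrightarrow p_j^\ell\big)
+ \overrightarrow p_j^\ell\big(\overline r_{j+1}-\overrightarrow r_j^\ell\big),
\]
and treat the two pieces by different arguments dictated by the asymmetric role of $\mathfrak r$ and $\mathfrak p$ under the noise.

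\textbf{First piece — momentum averaging.} The difference $p_j-\overrightarrow p_j^\ell=-\ell^{-1}\sum_{i=1}^{\ell-1}\sum_{k=j}^{j+i-1}(p_{k+1}-p_k)$ is a weighted sum of nearest-neighbor momentum gradients, and each $p_{k+1}-p_k$ is a coboundary for the exchange generator $S$. Plugging a trial function of the form $g=\sum_k h_k(\mathfrak r)\,p_k$ — where the discrete gradient $h_{k+1}-h_k$ is chosen to match the coefficient of $p_{k+1}-p_k$ weighted by $\overline r_{j+1}\varphi(j,s)$ — into the variational formula for $\|\cdot\|_{-1}$ and optimizing yields an $H_{-1}$ bound of order $n\ell\|\varphi\|_{L^2}^2$. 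After the $n^{-2}$ rescaling and the time integration via \eqref{eq:h-1}, this produces the first contribution $T(\ell/n)\|\varphi\|^2_{L^2(\mathbb R)}$ to \eqref{eq:2bg_estimate}.

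\textbf{Second piece — volume averaging.} Since $S$ does not act on $\mathfrak r$, the $\|\cdot\|_{-1}$ machinery is not directly available. Instead, I use the identity $L p_k=\alpha \nabla^+V_n'(r_k)+(\gamma/2)\Delta p_k$ together with the expansion $V_n'(r)=c_2 r+O(\varepsilon_n r^2)$ from \eqref{eq:5} to express each $r$-gradient $\nabla^+ r_k$ as a combination of $L p_k$ and $\Delta p_k$ up to controlled lower-order corrections. Substituting this into the telescopic representation of $\overline r_{j+1}-\overrightarrow r_j^\ell$ (a weighted sum of $\ell$ $r$-gradients) and applying Dynkin's martingale formula in time converts the time integral of the second piece into (a) boundary terms at $s=0$ and $s=t$, (b) a Dynkin martingale, and (c) an $O(T/n)$ correction coming from $\Delta p_k$. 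Items (a) and (b) are estimated in $L^2(\mathbb P_n)$ via Doob's inequality together with the variance bound $\mathrm{Var}_{\nu_n}(\overrightarrow p_j^\ell)=O(\ell^{-1})$ provided by the averaging, which yields the $T/\ell\,\|\varphi\|^2_{L^2}$ contribution. The residual $T^2\|\varphi'\|^2_{L^2}/n$ arises from differentiating the time-dependent shift $\varphi(j,s)=\varphi(j/n+v^\sigma_n s)$ inside Dynkin's identity, each such time derivative bringing a factor $v^\sigma_n=O(n)$.

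\textbf{Main obstacle.} The difficult step is the second piece: the noise degeneracy on $\mathfrak r$ rules out any spectral-gap or standard one-block estimate on the volume side, so the replacement can only be carried out dynamically at the cost of the inverse acceleration factor $n^{-2}$. The delicate balancing of this dynamical loss against the variance gain $\ell^{-1}$ coming from the average $\overrightarrow p_j^\ell$ is the technical heart of the argument and is exactly what produces the two competing scales $\ell/n$ and $1/\ell$ in \eqref{eq:2bg_estimate}; their simultaneous minimization at $\ell\sim\sqrt n$ delivers the $n^{-1/2}$ rate that is used in the sequel.
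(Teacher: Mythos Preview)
Your overall strategy matches the paper's: the same decomposition into a momentum-averaging piece (handled via the $H_{-1}$ bound and the exchange noise) and a volume-averaging piece (handled dynamically through the generator because $S$ does not act on $\mathfrak r$). The treatment of the first piece is essentially identical to the paper's one-block estimate.

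The gap is in the second piece. You write $\nabla^+ r_k$ as a combination of $Lp_k$ and $\Delta p_k$, then multiply by $\overrightarrow p_j^\ell$ and invoke Dynkin's formula. But $\overrightarrow p_j^\ell\, L p_k$ is \emph{not} of the form $Lf$, so Dynkin does not apply to it. What is needed is the product identity
\[
L\big(\overrightarrow p_j^\ell\, p_k\big)
=\overrightarrow p_j^\ell\, L p_k + p_k\, L\overrightarrow p_j^\ell + \Gamma\big(\overrightarrow p_j^\ell, p_k\big),
\]
and the cross term $p_k\, A\overrightarrow p_j^\ell = \ell^{-1} p_k\,\big(V_n'(r_{j+\ell})-V_n'(r_j)\big)$ is the most delicate part of the argument, absent from your list (a)--(c). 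It contains another $r$-gradient multiplied by a momentum, and the paper has to do a further summation by parts to rewrite it as a sum of nearest-neighbor momentum differences $(p_m-p_{m+1})$ weighted by $V_n'$-terms, after which a second $H_{-1}$ estimate closes the bound. Without this step the argument does not close: the boundary terms and the Dynkin martingale you mention are actually of size $O(\ell^2/n^3)$, far too small to account for the $T/\ell$ contribution, which in the paper comes instead from the crude $L^2$ control of the nonlinear correction $\ve_n(\overline{r_{k+1}^2}-\overline{r_k^2})$ via the variance gain of the averaged momentum.

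In short, the replacement $\nabla^+ r_k \leadsto Lp_k$ is the right idea, but you must apply the generator to the full product $\overrightarrow p_j^\ell p_k$ and then separately control the commutator term $p_k A\overrightarrow p_j^\ell$; this is where the degenerate noise forces an extra round of $H_{-1}$ work.
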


Recall from \eqref{eq:antisymmetric_current_dominant} that we defined $W_j= - \frac{c_3}{2c_2^{3/2}} p_j\overline{r}_{j+1} \ven$ the dominant term
of the nonlinear contribution.
Since
$4 c_2 \overrightarrow{r}_{j}^\ell \overrightarrow{p}_{j}^\ell =
\left(\overrightarrow{(\xi^{+})}_{j}^\ell\right)^2 - \left(\overrightarrow{(\xi^{-})}_{j}^\ell\right)^2 $,
The above result enables us to replace $W_j$ with
\begin{equation*}
\begin{split}
- \frac{c_3}{2c_2^{3/2}} \overrightarrow{p}_{j}^\ell \overrightarrow{r}_{j}^\ell\ven
=  - \frac{c_3}{8c_2^{2}}
\left[\left(\overrightarrow{(\xi^{+})}_{j}^\ell\right)^2 - \left(\overrightarrow{(\xi^{-})}_{j}^\ell\right)^2\right]\ven.
\end{split}
\end{equation*}

The proof of the second-order Boltzmann-Gibbs principle we give here differs from
the one in \cite{gonccalves2014nonlinear} (see also \cite{gonccalves2017second}),
because of the degeneracy of the noise acting only on the momenta.
In order to establish it also for the positions we need to use the hamiltonian part of the dynamics. 
Here we decompose the discrepancy in the assertion \eqref{eq:2bg_estimate} as 
\begin{equation*}
\begin{aligned}
\overline{r}_{j+1} p_{j} 
- (\overrightarrow r^\ell_{j+1}) \overrightarrow p^\ell_{j}
=\overline{r}_{j+1} (p_{j} - \overrightarrow p^\ell_{j})
+ \overrightarrow p^\ell_{j} (\overline{r}_{j+1} - \overrightarrow r^\ell_{j+1}) . 
\end{aligned}
\end{equation*}
Notice that we can bound $(\overrightarrow r^\ell_{j+1}) 
\big( (\overrightarrow p^\ell_{j+1}) - \overrightarrow p^\ell_{j} \big)$ by paying a price of $\ell/n$ in the right-hand side of the assertion. 
Then, \cref{prop:2bg} follows by two lemmas treating separately the two terms.

\begin{lemma}[One-block estimate]
\label{lem:one_block_estimate}
For each $\ell\in\mathbb N$, we have that 
\begin{equation*}
\begin{aligned}
\mathbb E_n\bigg[\sup_{0\le t\le T}\bigg|\int_0^t \sum_{j\in\mathbb Z} \overline{r}_{j+1}
(p_{j} -\overrightarrow p^\ell_j)
\varphi(j,s) ds \bigg|^2 \bigg]
\lesssim \frac{T\ell}{n} \|\varphi\|^2_{L^2(\mathbb R)}. 
\end{aligned}
\end{equation*}
\end{lemma}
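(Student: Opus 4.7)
The plan is to apply Proposition~\ref{prop:kipnis_varadhan_estiamte}, which, since the process runs on the accelerated time scale $L_n = n^2 L$, produces an additional factor $n^{-2}$ in front of the integrated $\|\cdot\|_{-1}^2$. Writing
\begin{equation*}
F_s(\mathfrak r, \mathfrak p) \;=\; \sum_{j\in\mathbb Z} \overline{r}_{j+1}\,\bigl(p_j - \overrightarrow{p}_{j}^\ell\bigr)\,\varphi(j,s),
\end{equation*}
which is centred under $\nu_n$ since $\nu_n$ is a product measure and both $\overline{r}_{j+1}$ and $p_j - \overrightarrow{p}_{j}^\ell$ are centred, the claim therefore reduces to the static bound $\|F_s\|_{-1}^2 \lesssim \ell\,n\,\|\varphi\|_{L^2(\mathbb R)}^2$, uniformly in $s\in[0,T]$. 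Indeed, multiplying by $T$ and the $n^{-2}$ from Kipnis--Varadhan yields precisely $T\ell/n \cdot \|\varphi\|_{L^2(\mathbb R)}^2$.

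To access this, I would first rewrite the difference in terms of the elementary gradients that the noise $S$ acts on. Using the identity
\begin{equation*}
p_j - \overrightarrow{p}_{j}^\ell \;=\; \frac{1}{\ell} \sum_{k=0}^{\ell-2} (\ell-1-k)\,(p_{j+k} - p_{j+k+1}),
\end{equation*}
and reindexing $u = j+k$, one rewrites $F_s = \sum_u (p_u - p_{u+1})\,A_u(\mathfrak r, s)$ with
\begin{equation*}
A_u(\mathfrak r, s) \;=\; \frac{1}{\ell}\sum_{k=0}^{\ell-2}(\ell-1-k)\,\overline{r}_{u-k+1}\,\varphi(u-k, s).
\end{equation*}

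Next, I would exploit that $F_s$ belongs to the first Hermite chaos in the $p$-variables under $\nu_n$. Because $S$ exchanges only momenta, it preserves the polynomial degree in $p$, and the Hermite chaos decomposition is orthogonal under $\nu_n$; hence the sup in $\|F_s\|_{-1}^2 = \sup_g\{2\langle F_s, g\rangle_{L^2(\nu_n)} - \langle -Sg, g\rangle_{L^2(\nu_n)}\}$ may be restricted to test functions of the form $g(\mathfrak r, \mathfrak p) = \sum_k b_k(\mathfrak r)\,p_k$. For such $g$, using that the $p_k$'s are conditionally (on $\mathfrak r$) i.i.d.\ $\mathcal N(0,\beta^{-1})$, a direct computation yields $\langle -Sg, g\rangle = (\gamma/(2\beta))\,E_{\nu_n}[\sum_k (b_{k+1}-b_k)^2]$ and $\langle F_s, g\rangle = -\beta^{-1}\,E_{\nu_n}[\sum_u A_u (b_{u+1}-b_u)]$. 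By Cauchy--Schwarz in $u$ and in $\mathfrak r$, followed by optimisation over the amplitude of $b$, one obtains
\begin{equation*}
\|F_s\|_{-1}^2 \;\le\; \frac{2}{\gamma\beta}\, E_{\nu_n}\!\left[\sum_u A_u^2\right].
\end{equation*}

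The last step is a direct $L^2$ computation: the $r_j$'s are i.i.d.\ under the product measure $\nu_n$, with variance $\sigma_r^2 = (\beta c_2)^{-1} + O(\varepsilon_n^2)$ uniformly bounded thanks to Lemma~\ref{lem:static_estimate} and \eqref{eq:r_moment_computation}, so the cross terms in $A_u^2$ vanish and $E_{\nu_n}[A_u^2] = (\sigma_r^2/\ell^2)\sum_{k=0}^{\ell-2}(\ell-1-k)^2\,\varphi(u-k,s)^2$. Summing in $u$, exchanging the order of summation and using translation invariance with $\sum_{k=0}^{\ell-2}(\ell-1-k)^2 \sim \ell^3/3$ yields $E_{\nu_n}[\sum_u A_u^2] \lesssim \ell \sum_v \varphi(v/n + v^\sigma_n s)^2 \lesssim \ell\,n\,\|\varphi\|_{L^2(\mathbb R)}^2$, which closes the estimate. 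The main point demanding care — but which I do not expect to pose any genuine obstacle — is the rigorous reduction of the supremum in $\|F_s\|_{-1}^2$ to first-chaos test functions, which ultimately rests on the $S$-invariance of the Hermite chaos grading; everything else is a mechanical Gaussian computation.
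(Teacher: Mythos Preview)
Your proposal is correct and follows essentially the same strategy as the paper: the same telescoping decomposition of $p_j - \overrightarrow{p}_j^\ell$ into nearest-neighbour gradients, the same reindexing to $F_s = \sum_u (p_u - p_{u+1}) A_u(\mathfrak r,s)$, the same Kipnis--Varadhan reduction to an $H_{-1}$ bound (with the $n^{-2}$ from the accelerated generator), and the same final $L^2$ computation of $E_{\nu_n}[\sum_u A_u^2]\lesssim \ell n\|\varphi\|_{L^2}^2$.

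The only methodological difference is in how the $H_{-1}$ bound is obtained. You restrict the variational supremum to first-Hermite-chaos test functions $g=\sum_k b_k(\mathfrak r)p_k$ and compute $\langle F_s,g\rangle$ and $\langle -Sg,g\rangle$ explicitly. The paper instead uses the exchange symmetry directly: since $A_u$ (their $F_k$) depends only on $\mathfrak r$, one has $E_{\nu_n}[A_u(p_u-p_{u+1})f]=E_{\nu_n}[A_u p_{u+1}(f(\mathfrak r,\mathfrak p^{u,u+1})-f(\mathfrak r,\mathfrak p))]$, and Young's inequality then bounds $2\langle F_s,f\rangle$ by $\tfrac{1}{K\beta}\sum_u E_{\nu_n}[A_u^2]+\tfrac{4K}{\gamma}\|f\|_1^2$ for arbitrary local $f$. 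This sidesteps the chaos-reduction step entirely. Your route is perfectly valid---$S$ does preserve the Hermite grading in the $p$-variables and $F_s$ sits in the first chaos, so the reduction is legitimate---and actually yields a constant a factor~$2$ sharper; the paper's route is just a touch more elementary.
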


\begin{proof}
Note that 
\begin{equation*}
p_{j} -\overrightarrow p^\ell_j
= \sum_{i=0}^{\ell-2}(p_{j+i}-p_{j+i+1}) \psi_{i-1} 
\end{equation*}
where $\psi_i=(\ell-i)/\ell$. 
From this identity, we have 
\begin{equation*}
\begin{aligned}
\sum_{j\in\mathbb Z}\overline{r}_{j+1} (p_{j} -\overrightarrow p^\ell_j) \varphi(j,t) 
&= \sum_{j\in\mathbb Z}\overline{r}_{j+1}\sum_{i=0}^{\ell-2}(p_{j+i}-p_{j+i+1})\psi_{i-1} \varphi(j,t)
= \sum_{k\in\mathbb Z} F_k (p_k-p_{k+1})
\end{aligned}
\end{equation*}
where $F_j = \sum_{i=0,\ldots,\ell-2} \overline{r}_{j+1-i} \varphi(j-i,t) \psi_{i-1}$.
Above, in the second identity, we set $k=j-i$ to rearrange the sum. 
Here, we note that the local function $F_k$ does not depend on either $(r_k, p_k)$ or $(r_{k+1},p_{k+1})$.


Take an arbitrary local $L^2(\nu_n)$-function $f$ on $\mathscr X$ and
by Young's inequality, we have that 
\begin{equation*}
\begin{aligned}
2 \sum_{j\in\mathbb Z} 
E_{\nu_n} \big[F_j (p_j-p_{j+1}) f(\mathfrak r, \mathfrak p) \big] 
&=
2 \sum_{j\in\mathbb Z} 
E_{\nu_n}\big[ F_j p_{j+1} (f(\mathfrak r, \mathfrak p^{j,j+1})-f(\mathfrak r, \mathfrak p)) \big] \\
&\le \frac{1}{K} \sum_{j\in\mathbb Z} 
E_{\nu_n}\big[ F_j^2 p_{j+1}^2 \big]
+ K \sum_{j\in\mathbb Z} E_{\nu_n}\big[ (f(\mathfrak r, \mathfrak p^{j,j+1})-f(\mathfrak r, \mathfrak p))^2 \big]\\
&= \frac{1}{K\beta} \sum_{j\in\mathbb Z} E_{\nu_n}\big[ F_j^2 \big]
+ K \sum_{j\in\mathbb Z} E_{\nu_n}\big[ (f(\mathfrak r, \mathfrak p^{j,j+1})-f(\mathfrak r, \mathfrak p))^2 \big]
\end{aligned}
\end{equation*}
for any $K>0$. Now, since
\begin{equation*}
\sum_{j\in\mathbb Z} 
E_{\nu_n} \big[F_j^2\big]
= \sum_{j\in\mathbb Z}\sum_{i=0}^{\ell-1} E_{\nu_n}[\overline{r}_{j+1-i}^2] \varphi(j-i,t)^2 \psi_i^2
\lesssim \ell \beta^{-1} \sum_{j\in\mathbb Z} \varphi(j,t)^2 
\lesssim \ell n \beta^{-1} \|\varphi\|^2_{L^2(\mathbb R)},
\end{equation*}
choosing $K=O(n^2)$, this gives the bound
\begin{equation}
\label{eq:h-1p}
\bigg\| \sum_{j\in\mathbb Z}F_j (p_j-p_{j+1}) \bigg\|_{-1}^2
\lesssim \ell n \beta^{-2} \|\varphi\|^2_{L^2(\mathbb R)}.
\end{equation}
Hence, \cref{prop:kipnis_varadhan_estiamte} completes the proof.
\end{proof}

\begin{lemma}[Two-block estimate]
\label{lem:two_block_estimate}
For each $\ell\in\mathbb N $, we have that 
\begin{equation*}
\begin{aligned}
&\mathbb E_n\bigg[\sup_{0\le t \le T} \bigg|\int_0^t \sum_{j\in\mathbb Z}
\overrightarrow p^\ell_{j}(\overline{r}_{j+1} - \overrightarrow r^\ell_{j+1}) 
\varphi(j,s) ds \bigg|^2 \bigg]
\lesssim T\left(\frac{ \ell}{n} + \frac{1}{\ell}\right)\| \varphi\|^2_{L^2(\mathbb R)}
+ T^2 \frac{1}{n}\| \varphi'\|^2_{L^2(\mathbb R)}.
\end{aligned}
\end{equation*}
\end{lemma}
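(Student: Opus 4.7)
The plan is to exploit the Hamiltonian identity $Ap_j = \nabla^+ V_n'(r_j) = c_2 \nabla^+ r_j + O(\varepsilon_n)$ in order to trade the purely $r$-dependent discrepancy (to which the momentum-exchange noise $S$ is blind) for a time derivative under the antisymmetric part $A$, which can then be removed through a Dynkin martingale decomposition. This is the natural substitute here for the step that, in the standard two-block estimate, relies on the noise acting also on positions.

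First I would telescope the discrepancy as
\begin{equation*}
\overline{r}_{j+1} - \overrightarrow r^\ell_{j+1} = -\frac{1}{\ell}\sum_{i=1}^{\ell-1}(\ell-i)\,\nabla^+ r_{j+i}
\end{equation*}
and substitute $\nabla^+ r_{j+i} = c_2^{-1} A p_{j+i} + O(\varepsilon_n)$ from the expansion \eqref{eq:5}, the $O(\varepsilon_n)$ correction being handled via \cref{lem:cubic_term_characterization} and \cref{lem:consequence_one_block_estimate} and easily shown to be negligible under $\varepsilon_n = O(n^{-1/2})$. Introducing the weighted block $P_j = \sum_{i=1}^{\ell-1}\tfrac{\ell-i}{\ell}p_{j+i}$, the main contribution reduces to $-c_2^{-1}\int_0^t\sum_j\overrightarrow p^\ell_j\,AP_j\,\varphi(j,s)\,ds$. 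I then apply Dynkin's formula to the auxiliary functional $\psi_j(s)=\overrightarrow p^\ell_j(s)P_j(s)\varphi(j,s)$, which yields
\begin{equation*}
\int_0^t\overrightarrow p^\ell_j\,AP_j\,\varphi\,ds
=\frac{\psi_j(t)-\psi_j(0)-M^{\psi_j}_t}{\alpha n^2}
-\int_0^t P_j(A\overrightarrow p^\ell_j)\varphi\,ds
-\frac{1}{\alpha}\int_0^t S(\overrightarrow p^\ell_j P_j)\varphi\,ds
-\frac{1}{\alpha n^2}\int_0^t \overrightarrow p^\ell_j P_j\,\partial_s\varphi\,ds,
\end{equation*}
where the key identity $A\overrightarrow p^\ell_j = \ell^{-1}(V_n'(r_{j+\ell})-V_n'(r_j))$ is obtained by telescoping.

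After summing over $j$, each of the five right-hand-side contributions is estimated separately. The boundary terms are $O(\ell^2/n^3)\|\varphi\|^2_{L^2(\mathbb{R})}$ by the decorrelation of $\psi_j,\psi_{j'}$ for $|j-j'|\ge 2\ell$, hence negligible. For the martingale, Doob's inequality together with a careful count showing that the quadratic-variation increment against each exchange $(k,k+1)$ has $L^2$-norm dominated by $\sup_{|j-k|\le\ell}|\varphi(j,s)|$ yields $O(T\ell/n)\|\varphi\|^2_{L^2(\mathbb{R})}$; the $S$-term gives the same order via the dual inequality $\|Sg\|_{-1}\le\|g\|_1$ combined with \cref{prop:kipnis_varadhan_estiamte}. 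The drift from $\partial_s\varphi = v^\sigma_n\partial_x\varphi$, thanks to the prefactor $(\alpha n^2)^{-1}v^\sigma_n=O(n^{-1})$ absorbing one power of $n$, contributes $O(T^2/n)\|\varphi'\|^2_{L^2(\mathbb{R})}$ by Cauchy-Schwarz in time. The main obstacle is the complementary term $\ell^{-1}P_j(V_n'(r_{j+\ell})-V_n'(r_j))\varphi(j,s)$: here $V_n'(r_{j+\ell})-V_n'(r_j)$ is independent of $P_j$ under $\nu_n$, and the crucial observation is that the increments $V_n'(r_{j+\ell})-V_n'(r_j)$ for different $j$ decorrelate except on the diagonal (the shift-by-$\ell$ pairs being killed by independence of $P_j,P_{j+\ell}$), so that the relevant covariance sum collapses to a diagonal of size $O(n/\ell)\|\varphi\|_{L^2(\mathbb{R})}^2$, which after the appropriate $H^{-1}$-estimate via \cref{prop:kipnis_varadhan_estiamte} produces precisely the $O(T/\ell)\|\varphi\|^2_{L^2(\mathbb{R})}$ rate matching the asserted bound.
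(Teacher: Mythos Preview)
Your overall strategy—using the Hamiltonian identity $Ap_j=\nabla^+V_n'(r_j)$ and Dynkin's formula applied to $\overrightarrow p^\ell_j P_j$—is exactly the paper's approach, and the estimates for the boundary terms, the martingale, the $S$-term, and the $\partial_s\varphi$ drift are all correct and match the paper.

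There is, however, a genuine gap in your treatment of the ``complementary term'' $\sum_j \ell^{-1}P_j\bigl(V_n'(r_{j+\ell})-V_n'(r_j)\bigr)\varphi(j,s)$. You correctly compute that the $L^2(\nu_n)$ covariance sum collapses to $O(n/\ell)\|\varphi\|^2_{L^2}$, but this is an $L^2$ bound, not an $H^{-1}$ bound: combined with Cauchy--Schwarz in time it gives $T^2 n/\ell$, which diverges. To invoke \cref{prop:kipnis_varadhan_estiamte} you need a gradient structure with respect to the momentum exchange noise, and as written the term has none: $V_n'(r_{j+\ell})-V_n'(r_j)$ depends only on positions (invisible to $S$), and $P_j$ is a weighted block sum, not a gradient. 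There is no general inequality of the form $\|F\|_{-1}^2\lesssim n\|F\|_{L^2}^2$ here—that would amount to a spectral gap, which the paper explicitly avoids.

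The paper closes this gap by one further summation by parts in $j$, shifting the difference from the position factor onto the momentum factor:
\[
\tfrac1\ell\sum_j P_j\bigl(\overline{V_n'(r_{j+\ell})}-\overline{V_n'(r_j)}\bigr)\varphi_j
=\tfrac1\ell\sum_j(P_{j-\ell}-P_j)\,\overline{V_n'(r_j)}\,\varphi_{j-\ell}
+\tfrac1\ell\sum_j(\varphi_{j-\ell}-\varphi_j)P_j\,\overline{V_n'(r_j)}.
\]
The second piece is harmless (it contributes to the $\|\varphi'\|^2_{L^2}$ term). The point of the first piece is that $P_{j-\ell}-P_j=\sum_{i,i'}\psi_{i-1}(p_{j+i-i'}-p_{j+i-i'+1})$ telescopes into a double sum of nearest-neighbour momentum differences, so the exchange noise now sees it and the $\|\cdot\|_{-1}$ estimate goes through exactly as in the one-block lemma. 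This is the missing ingredient in your argument.
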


\begin{proof}
Notice that 
\begin{equation*}
\begin{aligned}
\overline{r}_{j+1}-\overrightarrow r^\ell_{j+1} 
= \sum_{i=0}^{\ell-2}(r_{j+1+i}-r_{j+i+2})\psi_{i-1} 
\end{aligned}
\end{equation*}
where $\psi_i=(\ell-i)/\ell$. 
Then, we compute 
\begin{equation*}
\begin{aligned}
&\bigg\langle 2\sum_{j\in\mathbb Z} \overrightarrow p^\ell_{j}
(\overline{r}_{j+1}-\overrightarrow r^\ell_{j+1}) \varphi(j,t) , f(\mathfrak r, \mathfrak p) \bigg\rangle \\
&\quad=2\sum_{j\in\mathbb Z} \sum_{i=0}^{\ell-2}E_{\nu_n}
\big[  \overrightarrow p^\ell_{j} (r_{j+i+1}-r_{j+i+2})\psi_{i-1} \varphi(j,t) f(\mathfrak r, \mathfrak p) \big]. 
\end{aligned}
\end{equation*}
Here, observe that
\begin{equation*}
\begin{split}
L\left( \overrightarrow p^\ell_{j} p_k \right) &=
\alpha \left( \overrightarrow p^\ell_{j}  A p_k + p_k A \overrightarrow p^\ell_{j} \right)
+ \gamma S(\overrightarrow p^\ell_{j} p_k)\\
&= \alpha \left(\overrightarrow p^\ell_{j} \left(V_n'(r_{k+1}) -  V_n'(r_{k})\right)
+ p_k A \overrightarrow p^\ell_{j} \right)
+ \gamma S(\overrightarrow p^\ell_{j} p_k)\\
&= \alpha \Big( \overrightarrow p^\ell_{j} c_2 \left(r_{k+1} -  r_{k}\right)
+ p_k A \overrightarrow p^\ell_{j} \Big)
+ \alpha \overrightarrow p^\ell_{j}\ve_n c_3 \left(\overline{r_{k+1}^2} - \overline{ r_{k}^2}\right) +
\alpha \overrightarrow p^\ell_{j} O(\ve_n^2)\\
&\quad+ \gamma S(\overrightarrow p^\ell_{j} p_k). 
\end{split}
\end{equation*}
This gives
\begin{equation*}
\begin{split}
&\sum_{j\in\mathbb Z} \sum_{i=0}^{\ell-2}
\overrightarrow p^\ell_{j} (r_{j+i+1}-r_{j+i+2})\psi_{i-1} \varphi(j,t)\\
&\quad= -\frac 1{\alpha c_2}  \sum_{j\in\mathbb Z} \sum_{i=0}^{\ell-2}
L\left(\overrightarrow p^\ell_{j} p_{j+i+1} \psi_{i-1} \varphi(j,t)\right) 
+ \frac{\gamma}{\alpha c_2}\sum_{j\in\mathbb Z} \sum_{i=0}^{\ell-2} S(\overrightarrow p^\ell_{j} p_{j+i+1})
\psi_{i-1} \varphi(j,t)\\
&\qquad+ \sum_{j\in\mathbb Z} \sum_{i=0}^{\ell-2} p_{j+i+1} A \overrightarrow p^\ell_{j} \psi_{i-1} \varphi(j,t)
+ \frac{c_3}{c_2} \sum_{j\in\mathbb Z} \sum_{i=0}^{\ell-2}
\overrightarrow p^\ell_{j}\ve_n \left(\overline{r_{j+i+2}^2} - \overline{ r_{j+i+1}^2}\right)  \psi_{i-1} \varphi(j,t)
\end{split}
\end{equation*}
plus some smaller terms. 
Let us handle each term in the right-hand side of the last display. 
First, by Dynkin's martingale formula, the fluctuating term is written as 
\begin{equation*}
\begin{split}
& \int_0^t (n^{-2} \partial_s + L)\left( \sum_{j\in\mathbb Z} \sum_{i=0}^{\ell-2}
\overrightarrow p^\ell_{j}(s) p_{j+i+1}(s) \psi_{i-1} \varphi(j,s) \right) \; ds \\
&\quad= \frac {1}{n^2}
\sum_{j\in\mathbb Z}\sum_{i=0}^{\ell-2}\left[   \overrightarrow p^\ell_{j}(t) p_{j+i+1}(t) \psi_{i-1} \varphi(j,t) -
\overrightarrow p^\ell_{j}(0) p_{j+i+1}(0) \psi_{i-1} \varphi(j,0) \right]
\end{split}
\end{equation*}
with some martingale, which is negligible as we did in the proof of \cref{lem:equipartition_energy}.  
Moreover, we can bound the square of first term of the last display by
\begin{equation*}
\begin{split}
\frac {1}{n^4}
E_{\nu_n} \left[\bigg(
\sum_{j\in\mathbb Z}\sum_{i=0}^{\ell-2}  \overrightarrow p^\ell_{j} p_{j+i+1} \psi_{i-1} \varphi_j \bigg)^2\right]
\le C \frac{\ell^2}{n^3},
\end{split}
\end{equation*}
which is clearly negligible.
Next, regarding the error term due to the nonlinearity, we have the bound 
\begin{equation}
\label{eq:errve}
\begin{split}
&\frac 1n E_{\nu_n}\left[\bigg( \sum_{j\in\mathbb Z}
\overrightarrow p^\ell_{j} \Big( \overline{r_{j+1}^2} -  \overrightarrow{(\overline{r}^2)}^\ell_{j+1}\bigg)   \varphi_j\Big)^2\right] \\
&\quad\le \frac 1n E_{\nu_n}\left[\bigg( \sum_{j\in\mathbb Z}
\overrightarrow p^\ell_{j} \overline{r_{j+1}^2} \varphi_j\bigg)^2 \right]
+ \frac 1n E_{\nu_n}\left[\left(\sum_{j\in\mathbb Z}
\overrightarrow p^\ell_{j}  \overrightarrow{(\overline{r}^2)}^\ell_{j+1}  \varphi_j\right)^2\right] \\
&\quad\le \frac Cn \sum_{j\in\mathbb Z} E_{\nu_n}\left[( \overrightarrow p^\ell_{j})^2\right] \varphi_j^2 +
\frac 1n \sum_{|j-j'|\le \ell} E_{\nu_n}\left[ \overrightarrow p^\ell_{j}\overrightarrow p^\ell_{j'}\right] E_{\nu_n}\left[\overrightarrow{(\overline{r}^2)}^\ell_{j+1}    \overrightarrow{(\overline{r}^2)}^\ell_{j'+1}\right]
\varphi_j \varphi_{j'} \\
&\quad\le \frac C\ell \|\varphi\|^2_{L^2}.
\end{split}
\end{equation}
Additionally, we have that
\begin{equation}
\label{eq:Sp}
\Big\| S \sum_{j\in\mathbb Z} \sum_{i=0}^{\ell-2} \overrightarrow p^\ell_{j} p_{j+i+1} 
\psi_{i-1} \varphi_j\Big\|_{-1}^2 =
\Big\| \sum_{j\in\mathbb Z} \sum_{i=0}^{\ell-2} \overrightarrow p^\ell_{j} p_{j+i+1}
\psi_{i-1} \varphi_j\Big\|_{1}^2,
\end{equation}
which can be bounded similarly to \eqref{eq:h-1p}.

For the time derivative term we have
\begin{equation*}
\begin{split}
\frac 1{n^2} \sum_{j\in\mathbb Z} \sum_{i=0}^{\ell-2}
\overrightarrow p^\ell_{j}p_{j+i+1}  \psi_{i-1} \partial_t \varphi(j,s)
=  \frac {v^\sigma_n}{n^2} \sum_{j\in\mathbb Z} \sum_{i=0}^{\ell-2}
\overrightarrow p^\ell_{j}p_{j+i+1}  \psi_{i-1} (\varphi')(j,s). 
\end{split}
\end{equation*}
Then, 
\begin{equation*}
\begin{split}
\mathbb E\left(\left[ \sup_{0<t<T} \int_0^t \frac 1{n^2} \sum_{j\in\mathbb Z} \sum_{i=0}^{\ell-2}
\overrightarrow p^\ell_{j}p_{j+i+1}  \psi_{i-1} \partial_s \varphi(j,s) ds \right]^2\right)
\le \frac{C T^2 \ell^2}{n} \|  \varphi'\|_{L^2}^2
\end{split}
\end{equation*} 
Now, we are left with $\sum_{j\in\mathbb Z} \sum_{i=0}^{\ell-2} p_{j+i+1} A \overrightarrow p^\ell_{j} \psi_{i-1} \varphi_j$.
Setting $F^{(p)}_j = \sum_{i=0}^{\ell-2}\psi_{i-1} p_{j+i+1}$, we have that 
\begin{equation*}
\begin{split}
\sum_{j\in\mathbb Z} \sum_{i=0}^{\ell-2} p_{j+i+1} A \overrightarrow p^\ell_{j} \psi_{i-1} \varphi_j
&= \sum_{j\in\mathbb Z} F^{(p)}_j \frac{\overline{V_n'(r_{j+\ell})} - \overline{V_n'(r_j)}}{\ell}  \varphi_j\\
&=\frac 1\ell \sum_{j\in\mathbb Z} (F^{(p)}_{j-\ell} -  F^{(p)}_j) \overline{V_n'(r_j)}  \varphi_{j-\ell}
+ \frac 1\ell \sum_{j\in\mathbb Z} (\varphi_{j-\ell} - \varphi_j)  F^{(p)}_j \overline{V'(r_j)} . 
\end{split}
\end{equation*}
Note that the expectation of the square of the second term in the utmost right-hand side of the last display is bounded by
\begin{equation*}
E_{\nu_n}\left[ \left(\frac 1\ell \sum_{j\in\mathbb Z} (\varphi_{j-\ell} - \varphi_j)  F^{(p)}_j \overline{V'(r_j)}\right)^2\right]
\le \frac {C}{\ell^2} \sum_{j\in\mathbb Z}(\varphi_{j-\ell} - \varphi_j) ^2 \le \frac{C}{n} \|\varphi'\|^2_{L^2}.
\end{equation*}
For the other term we can estimate the $H_{-1}$ norm as follows: 
\begin{equation*}
\begin{split}
&E_{\nu_n}\left[\frac 1\ell \sum_{j\in\mathbb Z} (F^{(p)}_{j-\ell} -  F^{(p)}_j) \overline{V'(r_j)}
\varphi_{j-\ell} f(\mathfrak r, \mathfrak p) \right]\\
& = \frac 1\ell \sum_{j\in\mathbb Z}\sum_{i=0}^{\ell-2}\psi_{i-1}
E_{\nu_n}\left[ (p_{j+i+1-\ell}  -  p_{j+i+1}) \overline{V'(r_j)} f(\mathfrak r, \mathfrak p) \right]
\varphi_{j-\ell}\\
& = \frac 1\ell \sum_{j\in\mathbb Z}\sum_{i,i'=0}^{\ell-2}\psi_{i-1}
E_{\nu_n}\left[ (p_{j+i-i'}  -  p_{j+i- i'+1}) \overline{V'(r_j)} f(\mathfrak r, \mathfrak p) \right]
\varphi_{j-\ell}\\
& = \frac 1\ell \sum_{j\in\mathbb Z}\sum_{i,i'=0}^{\ell-2}\psi_{i-1}
E_{\nu_n}\left[ (p_{j}  -  p_{j+1}) \overline{V'(r_{j-i+i'})} f(\mathfrak r, \mathfrak p) \right]
\varphi_{j-i+i'}\\
& = \frac 1\ell \sum_{j\in\mathbb Z}\sum_{i,i'=0}^{\ell-2}\psi_{i-1}
E_{\nu_n}\left[ p_{j} \overline{V'(r_{j-i+i'})}
\left(f(\mathfrak r, \mathfrak p) - f(\mathfrak r, \mathfrak p^{j,j+1})\right)\right]
\varphi_{j-i+i'}. 
\end{split}
\end{equation*}
However, by Young's inequality, 
\begin{equation*}
\begin{split}
& E_{\nu_n}\left[\frac 1\ell \sum_{j\in\mathbb Z} (F^{(p)}_{j-\ell} -  F^{(p)}_j) \overline{V'(r_j)}
\varphi_{j-\ell} f(\mathfrak r, \mathfrak p) \right]\\
&\quad\le \frac{1}{2K} \sum_{j\in\mathbb Z}
E_{\nu_n}\left[\left( \frac 1\ell \sum_{i,i'=0}^{\ell-2}\psi_{i-1} p_{j} \overline{V'(r_{j-i+i'})} \varphi_{j-i+i'}\right)^2\right] \\
&\qquad+ \frac{K}{2} \sum_{j\in\mathbb Z}
E_{\nu_n}\left[  \left(f(\mathfrak r, \mathfrak p) - f(\mathfrak r, \mathfrak p^{j,j+1})\right)^2\right]\\
& \quad\le \frac {Cn}{2K} \|\varphi\|^2_{L^2} 
+ \frac {K}{2} \sum_{j\in\mathbb Z}
E_{\nu_n}\left[  \left(f(\mathfrak r, \mathfrak p) - f(\mathfrak r, \mathfrak p^{j,j+1})\right)^2\right],
\end{split}
\end{equation*}
for any $K>0$. Thus, choosing $K=O(n^2)$, we have that 
\begin{equation*}
\left\| \frac 1\ell \sum_{j\in\mathbb Z} (F^{(p)}_{j-\ell} -  F^{(p)}_j) \overline{V'(r_j)}
\varphi_{j-\ell} \right\|_{-1}^2 \le C n.  
\end{equation*}
Hence, \cref{prop:kipnis_varadhan_estiamte} completes the proof.

\end{proof}

\section{Riemann-Lebesgue Estimates}
\label{sec:riemann_lebesgue_estimates}

\subsection{Handling fields with wrong velocity} 
Here we present some estimates involved with a Riemann-Lebesgue lemma. 
First we show that the fluctuation fields, when they are looked at in a divergent velocity in a wrong direction, do not survive in the limit.  
This is essentially established from the viewpoint of the result shown in~\cite[Theorem 5.1]{cannizzaro2024abc}, albeit they study a particle system on the torus.  
Instead of giving a quite general form of the statement as is done in \cite{cannizzaro2024abc}, here let us give more direct proof, in order to kill fluctuation fields with divergent moving frames to a wrong direction. 
Let us begin from handling the fields with wrong velocity.   

\begin{lemma}
\label{lem:field_with_wrong_velocity}
Fix $\sigma=\pm$ and let $\{\tilde{v}_n\}_{n\in\mathbb N}$ be a real sequence such that $\lim_{n\to\infty}|v^\sigma_n-\Tilde{v}_n|=\infty$. 
Then, we have that 
\begin{equation*}
\limsup_{n\to\infty}
\mathbb E_n \bigg[
\sup_{0\le t\le T} \bigg| \int_0^t 
\widetilde{\mathcal{X}}^{\sigma,n}_s(\varphi _{\tilde{v}_ns}) ds \bigg| \bigg]
=0 .
\end{equation*}
\end{lemma}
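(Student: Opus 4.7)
The plan is to apply Dynkin's martingale formula to a \emph{primitive} of $\varphi$, thereby converting the time integral $\int_0^t \mathcal X^{\sigma,n}_s(\varphi_{\tilde v_n s})\,ds$ into a sum of terms divided by the diverging mismatch $\tilde v_n - v^\sigma_n$. In every application of this lemma in the paper the test function is a derivative of a Schwartz function and so has zero integral; under this assumption we may set $\psi(x) = \int_{-\infty}^x \varphi(y)\,dy \in \mathcal S(\mathbb R)$, so that $\partial_x\psi = \varphi$.

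Then repeat the computation of \cref{lem:martingale_decomposition_main} but with test function $\psi_{\tilde v_n s}$. The only change is that $\partial_s[\psi(x + \tilde v_n s)] = \tilde v_n \partial_x\psi$ whereas the leading transport piece of the generator still produces the contribution $-v^\sigma_n \partial_x\psi$, so the two no longer cancel and one finds
\begin{equation*}
\widetilde{\mathcal X}^{\sigma,n}_t(\psi_{\tilde v_n t}) = \widetilde{\mathcal X}^{\sigma,n}_0(\psi)
+ (\tilde v_n - v^\sigma_n)\int_0^t \widetilde{\mathcal X}^{\sigma,n}_s(\varphi_{\tilde v_n s})\,ds
+ \mathcal D^{\sigma,n}_t(\psi) + \mathcal Q^{\sigma,n}_t(\psi) + \mathcal M^{\sigma,n}_t(\psi),
\end{equation*}
where $\mathcal D^{\sigma,n}_t(\psi)$ gathers the diffusive/drift contributions tested against $\partial_x^2\psi$ and $\partial_x\psi$, the term $\mathcal Q^{\sigma,n}_t(\psi)$ is the quadratic nonlinear contribution $\int_0^t\sum_j\overline{\xi^\sigma_j}\overline{\xi^\sigma_{j+1}}\,\partial_x\psi(\tfrac jn + \tilde v_n s)\,ds$, and $\mathcal M^{\sigma,n}_t(\psi)$ is a martingale. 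Solving for the integral of interest and invoking \cref{lem:corr} to replace $\widetilde{\mathcal X}$ by $\mathcal X$ yields
\begin{equation*}
\int_0^t \mathcal X^{\sigma,n}_s(\varphi_{\tilde v_n s})\,ds
= \frac{1}{\tilde v_n - v^\sigma_n}\big[\widetilde{\mathcal X}^{\sigma,n}_t(\psi_{\tilde v_n t}) - \widetilde{\mathcal X}^{\sigma,n}_0(\psi) - \mathcal D^{\sigma,n}_t(\psi) - \mathcal Q^{\sigma,n}_t(\psi) - \mathcal M^{\sigma,n}_t(\psi)\big] + o(1).
\end{equation*}

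The stationarity of $\nu_n$ together with \cref{lem:static_estimate} bounds the boundary fields $\widetilde{\mathcal X}^{\sigma,n}_{0,t}(\psi)$ uniformly in $L^2(\mathbb P_n)$ by $C\|\psi\|_{L^2}$. Doob's inequality and the expression \eqref{eq:quadratic_variation_simple} for the carré du champ bound the martingale $\mathcal M^{\sigma,n}_t(\psi)$ in $L^2$ by $C\sqrt{T}\|\partial_x\psi\|_{L^2} = C\sqrt{T}\|\varphi\|_{L^2}$. The diffusive and drift contributions in $\mathcal D^{\sigma,n}_t(\psi)$ are of the form $\int_0^t \widetilde{\mathcal X}^{\sigma,n}_s(\partial_x^k\psi_{\tilde v_n s})\,ds$ with $k=1,2$, and since $\partial_x^k\psi \in \mathcal S(\mathbb R)$ a routine Cauchy--Schwarz bound gives a uniform $O(T)$ control. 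After division by $|\tilde v_n - v^\sigma_n|\to\infty$, each of these terms contributes $o(1)$ in $L^1(\mathbb P_n)$.

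The main obstacle is the quadratic contribution $\mathcal Q^{\sigma,n}_t(\psi)$: because $\nu_n$ is a product measure and the $\overline{\xi^\sigma_j}\overline{\xi^\sigma_{j+1}}$ have variance of order one, the naive bound gives an $L^2$-norm of order $T\sqrt{n}$, which need not be $o(|\tilde v_n - v^\sigma_n|)$ for slow divergences. To deal with it I would invoke the second-order Boltzmann--Gibbs principle \cref{prop:2bg}, which replaces the pointwise product $p_j\overline{r}_{j+1}$ by its $\ell$-block average $\overrightarrow{p}_j^\ell\overrightarrow{r}_j^\ell$ at a cost of $\sqrt{T(\ell/n+1/\ell)}\|\partial_x\psi\|_{L^2}$ in $L^2$; the averaged quantity has variance $O(1/\ell^2)$ per site and, by the approximate independence on scales larger than $\ell$, its sum against $\partial_x\psi$ has $L^2$-norm $O(\sqrt{n/\ell}\,\|\partial_x\psi\|_{L^2})$, hence its time integral is $O(T\sqrt{n/\ell})$ in $L^2$. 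Choosing $\ell = \ell(n)\to\infty$ with $\ell/n\to 0$ and $\sqrt{n/\ell}/|\tilde v_n-v^\sigma_n|\to 0$ (always possible once $|\tilde v_n - v^\sigma_n|\to\infty$) makes $\mathcal Q^{\sigma,n}_t(\psi)/(\tilde v_n - v^\sigma_n)$ vanish in $L^1$ as well, which completes the argument.
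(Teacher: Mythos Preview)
Your approach is genuinely different from the paper's. The paper expands $\varphi$ in Fourier modes, writes $\mathcal{X}^{\sigma,n}_s(\varphi_{\tilde v_n s})$ as an integral over $k$ of $\mathcal{X}^{\sigma,n}_s(T_{v^\sigma_n s}\mathfrak e^\tau_k)\,e^{2\pi i k(\tilde v_n - v^\sigma_n)s}$ (after mollification), and then applies the classical Riemann--Lebesgue shift $s\mapsto s + 1/(2K_n)$ to the oscillating factor; the resulting time increment is controlled by a H\"older estimate on the \emph{correctly} recentered field $s\mapsto \mathcal{X}^{\sigma,n}_s(T_{v^\sigma_n s}\mathfrak e^\tau_k)$, which is available from the tightness analysis. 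Your antiderivative-plus-Dynkin approach instead produces a global prefactor $(\tilde v_n - v^\sigma_n)^{-1}$ from the transport mismatch, which is more elementary when it works and avoids the Fourier and mollifier machinery; on the other hand it costs you the extra hypothesis $\int\varphi=0$ (satisfied in every use of the lemma in the paper, but not stated).

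There is, however, a real gap in your treatment of the boundary term $\widetilde{\mathcal X}^{\sigma,n}_t(\psi_{\tilde v_n t})$. Stationarity bounds its $L^2(\mathbb P_n)$-norm at each \emph{fixed} $t$, but the statement asks for $\mathbb E_n[\sup_{0\le t\le T}|\cdot|]$, and you give no argument for the supremum. The natural route---rewriting $\widetilde{\mathcal X}^{\sigma,n}_t(\psi_{\tilde v_n t})$ via its own Dynkin decomposition, as in \cref{lem:gen_bound}---is circular here, since that decomposition reproduces precisely the diverging term $(\tilde v_n - v^\sigma_n)\int_0^t\widetilde{\mathcal X}^{\sigma,n}_s(\varphi_{\tilde v_n s})\,ds$ you are trying to bound. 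The gap can be closed: set $I^n_t:=\int_0^t\mathcal X^{\sigma,n}_s(\varphi_{\tilde v_n s})\,ds$ and note $\mathbb E_n[|I^n_t-I^n_s|^2]\le C|t-s|^2$ and $\mathbb E_n[(\sup_t|I^n_t|)^2]\le CT^2$ by Cauchy--Schwarz and stationarity; then Kolmogorov--Chentsov tightness together with your fixed-$t$ convergence and uniform integrability yield $\mathbb E_n[\sup_t|I^n_t|]\to 0$. But this step is missing. A secondary point: \cref{prop:2bg} is stated and proved for the frame $v^\sigma_n$; the time-derivative error in \cref{lem:two_block_estimate} scales with the frame speed, so your invocation of it at $\tilde v_n$ is only justified when $\tilde v_n=O(n)$ (true in the applications), whereas the paper's Fourier argument for \cref{lem:field_with_wrong_velocity} needs no such restriction.
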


\begin{proof}
\Add{ 
Let us introduce the Friedrich mollifier $\psi\in\mathcal S(\mathbb R)$
and denote $\psi^\tau(x)=\tau^{-1} \psi(\tau^{-1}x)$ for any $\tau>0$ .
Let us decompose $\varphi=\varphi\ast \psi^\tau+(\varphi - \varphi\ast \psi^\tau) $ where $\ast$ denotes the convolution. 
Then, note that 
\begin{equation}
\label{eq:field_decomposition_with_mollifier1}
\begin{aligned}
\widetilde{\mathcal{X}}^{\sigma,n}_t((\varphi\ast \psi^\tau)_{\tilde{v}_nt})
&=\int_{\mathbb R} \frac{1}{\sqrt{n}}\sum_{j\in\mathbb Z} \overline\xi^\sigma_j(t) 
\varphi(x)\psi^\tau(\tfrac{j}{n}+\widetilde{v}_nt -x) dx \\
&= \int_{\mathbb R} \frac{1}{\sqrt{n}}\sum_{j\in\mathbb Z} \overline\xi^\sigma_j(t) 
\varphi(x+(\tilde{v}_n-v_n^\sigma)t) \psi^\tau(\tfrac{j}{n}+v_n^\sigma t -x) dx \\
&= \int_{\mathbb R} \widetilde{\mathcal X}^{\sigma,n}_t(\psi^\tau_{v^\sigma_nt-x}) \varphi(x+(\tilde{v}_n-v^\sigma_n)t) dx. 
\end{aligned}
\end{equation}
Here note that the fluctuation field with the correct velocity satisfies  
\begin{equation}
\label{eq:fluctuation_field_hoelder}
\begin{aligned}
\limsup_{n\to\infty} 
\mathbb E_n 
\Big[ \big| \widetilde{\mathcal{X}}^{\sigma,n}_t(\varphi_{v^\sigma_nt})
- \widetilde{\mathcal{X}}^{\sigma,n}_s(\varphi_{v^\sigma_ns}) \big|\Big] 
\le C(T,\varphi) |t-s|^{1/2} 
\end{aligned}
\end{equation}
for any $s,t\in [0,T]$ and for any $\varphi\in\mathcal S(\mathbb R)$, with some $C(T,\varphi)>0$.   
The proof of the last estimate \eqref{eq:fluctuation_field_hoelder} is based on the martingale decomposition and it follows by estimating each term separately which is done in \cref{sec:tightness}.  
This H\"older estimates yields that for any $x\in \mathbb R$
\begin{equation}
\label{eq:riemann_lebesgues_neglecting_riemann_sum}
\begin{split}
\limsup_{n\to\infty} \mathbb E_n\bigg[\sup_{0\le t\le T}
\left|\int_0^t \int_{\mathbb R} \widetilde{\mathcal{X}}^{\sigma,n}_s(\psi^\tau_{ {v}^\sigma_ns-x})
\varphi(x + (\tilde{v}_n - v_n^{\sigma}) s)  dx ds  \right|\bigg]
= 0,
\end{split}
\end{equation}
since 
\begin{equation*}
\begin{aligned}
& \mathbb E\bigg[\sup_{0\le t\le T}
\left|\int_0^t \int_{\mathbb R} \widetilde{\mathcal{X}}^{\sigma,n}_s(\psi^\tau_{ {v}^\sigma_ns-x})
\varphi(x + (\tilde{v}_n - v_n^{\sigma}) s)  dx ds  \right|\bigg]\\
&\quad\le \int_0^T 
\int_{\mathbb R} 
\mathbb E_n\Big[ \big| 
\widetilde{\mathcal{X}}^{\sigma,n}_t(\psi^\tau_{ {v}^\sigma_nt-x})\big| \Big] \varphi(x+(\tilde{v}_n-v_n^\sigma)s)dx ds \\
&\qquad+ \int_0^T 
\int_{\mathbb R} 
\mathbb E_n\Big[ \big| 
\widetilde{\mathcal{X}}^{\sigma,n}_t(\psi^\tau_{ {v}^\sigma_nt-x})
- \widetilde{\mathcal{X}}^{\sigma,n}_s(\psi^\tau_{ {v}^\sigma_ns-x})\big| \Big] \varphi(x+(\tilde{v}_n-v_n^\sigma)s)dx ds 
\end{aligned}
\end{equation*}
and it is not hard to see that both terms in the right-hand side of the last display vanish as $n\to\infty$. 
}
\Add{
On the other hand, by a crude estimate we have
\begin{equation*}
\limsup_{n\to\infty} 
\mathbb E_n\bigg[ \sup_{0\le t\le T} 
\bigg|\int_0^t  
\widetilde{\mathcal{X}}^{\sigma,n}_s((\varphi-\varphi\ast \psi^\tau)_{\tilde{v}_ns}) 
ds \bigg| \bigg] 
\le C(T) \| \varphi-\varphi\ast\psi_\tau\|_{L^2(\mathbb R)} \end{equation*}
with some universal constant $C(T)>0$ which is independent of $\tau$. 
Then, by definition of the mollifier, the right-hand side of the last display vanishes as $\tau \to 0$, and thus we complete the proof. 
}
\end{proof}

Next, we show an analogous result for the quadratic fields with wrong velocity. 

\begin{lemma}
\label{lem:quadratic_field_with_wrong_velocity}
Given $\sigma=\pm$, let $\{\tilde{v}_n\}_{n\in\mathbb N}$ be a real sequence such that
$\lim_{n\to\infty}|\tilde{v}_n-v^\sigma_n|=\infty$.
Then, we have that 
\begin{equation*}
\limsup_{n\to\infty} 
\mathbb E_n \bigg[\sup_{0\le t\le T}\bigg| \int_0^t 
\sum_{j\in\mathbb Z}
\overline{\xi}^\sigma_j(s)\overline{\xi}^\sigma_{j+1}(s) \varphi(\tfrac{j}{n}+\tilde{v}_ns ) ds  \bigg| \bigg]
=0.
\end{equation*}
\end{lemma}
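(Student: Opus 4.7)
I mimic the Riemann-Lebesgue argument of \cref{lem:field_with_wrong_velocity}, adapted to the bilinear observable. Introduce the same quantity in the \emph{correct} moving frame,
\[
Q^{\sigma,n}_t(\varphi) := \sum_{j\in\mathbb Z}\overline\xi^\sigma_j(t)\overline\xi^\sigma_{j+1}(t)\,\varphi\big(\tfrac{j}{n}+v^\sigma_n t\big).
\]
Mollifying $\varphi$ by $\psi^\tau$ and Fourier-expanding $\varphi\ast\psi^\tau$, the identity $\varphi(\tfrac{j}{n}+\tilde v_n s)=e^{2\pi i k(\tilde v_n-v^\sigma_n)s}\,\varphi(\tfrac{j}{n}+v^\sigma_n s)$ (applied mode by mode) reduces the statement to proving, for each fixed $k\neq 0$ and $K^\sigma_n := k(\tilde v_n-v^\sigma_n)\to\infty$,
\[
\mathbb E_n\bigg[\sup_{0\le t\le T}\Big|\int_0^t Q^{\sigma,n}_s(\mathfrak e^\tau_k)\,e^{2\pi i K^\sigma_n s}\,ds\Big|\bigg]\xrightarrow[n\to\infty]{}0,
\]
together with a mollifier-error bound for $\varphi-\varphi\ast\psi^\tau$ uniform in $n$.

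The first ingredient is a second-order Boltzmann-Gibbs reduction. Expanding
\[
\overline\xi^\sigma_j\overline\xi^\sigma_{j+1}= c_2\overline r_{j+1}\overline r_{j+2}+\sigma\sqrt{c_2}(\overline r_{j+1}p_{j+1}+\overline r_{j+2}p_j)+p_jp_{j+1},
\]
the cross products are handled by \cref{prop:2bg}, while the two like-like products are replaced by a common block average using the Dynkin computation underlying \cref{lem-equipart} (the generator of $r_{j+1}p_j$ gives precisely $c_2 r_{j+1}(r_{j+1}-r_j)+p_j(p_{j+1}-p_j)$, whose shifted version delivers the required replacement up to 2BG-type errors). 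With $\ell=\sqrt n$, the bilinear $\overline\xi^\sigma_j\overline\xi^\sigma_{j+1}$ is thus replaced by the centered square $((\overrightarrow{\xi^\sigma})^\ell_j)^2$ of the smoothed phonon mode, modulo an error $O\big(T(\ell/n+1/\ell)\|\varphi\|_{L^2}^2+T^2 n^{-1}\|\varphi'\|_{L^2}^2\big)=o(1)$.

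The second ingredient is an $L^1(\mathbb P_n)$ modulus of continuity in $t$, uniform in $n$, for the smoothed quadratic field $\sum_j ((\overrightarrow{\xi^\sigma})^\ell_j)^2\,\mathfrak e^\tau_k(\tfrac{j}{n}+v^\sigma_n t)$, obtained from a Dynkin martingale decomposition along the lines of the tightness estimates of \cref{sec:tightness}. Equipped with this Hölder bound, the translation trick $s\mapsto s+1/(2K^\sigma_n)$ from the proof of \cref{lem:field_with_wrong_velocity} carries over verbatim: each Fourier-mode oscillatory integral splits into two boundary pieces of length $1/|K^\sigma_n|$ and a Hölder-difference integral, both vanishing as $n\to\infty$, and dominated convergence in $k$ (using $\widehat\varphi\,\widehat{\psi^\tau}\in L^1$) then closes the proof. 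The crux is precisely this uniform Hölder estimate: the naive $L^2$ scale of $\overline\xi^\sigma_j\overline\xi^\sigma_{j+1}$ is of order $\sqrt n$ (the overlap count of adjacent $\ell$-windows), so the 2BG reduction to block averages is indispensable to extract a time-regularity bound with constants independent of $n$.
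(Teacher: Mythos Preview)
Your overall architecture---second-order Boltzmann--Gibbs reduction to squared block averages, then a Riemann--Lebesgue argument via the translation trick---is exactly the paper's. The first ingredient is fine (the paper also invokes \cref{prop:2bg} for the replacement \eqref{eq:6}, with $\ell=[\delta n]$ and a double limit $n\to\infty$ then $\delta\to 0$, rather than your $\ell=\sqrt n$).

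The gap is in your second ingredient. You claim an $L^1(\mathbb P_n)$ H\"older estimate, uniform in $n$, for the $j$-summed quadratic field $\sum_j\big((\overrightarrow{\xi^\sigma})^\ell_j\big)^2\mathfrak e^\tau_k(\tfrac{j}{n}+v^\sigma_n t)$, to be obtained by a direct Dynkin decomposition ``along the lines of \cref{sec:tightness}''. But \cref{sec:tightness} establishes H\"older regularity only for \emph{time-integrated} objects such as $\mathcal B^{\sigma,n}_t-\mathcal B^{\sigma,n}_s$, not for the instantaneous quadratic field itself; and a direct Dynkin computation on the quadratic produces, after the leading transport cancellation, a drift term of the form $n^2\sum_j(\nabla^-(\overrightarrow{\xi^\sigma})^\ell_j)^2\mathfrak e^\tau_k(\cdots)$, whose $L^1$ norm scales like $n^3/\ell^2$ and does not stay bounded as $n\to\infty$ for either choice of $\ell$. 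So the uniform modulus of continuity you need is not available by that route.

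The paper resolves this by an extra factorization step that you omit: after the 2BG replacement one writes $(\overrightarrow{\xi^\sigma})^{[\delta n]}_j=\tfrac{1}{\sqrt n}\mathcal X^{\sigma,n}_s(T^-_{j/n}\iota_\delta)$, mollifies $\iota_\delta$ as well, and Fourier-expands \emph{both} the block kernel and $\varphi$. The $j$-sum then collapses, and the quadratic becomes a double integral over $(k_1,k_2)$ of a \emph{product} of two linear fields $\mathcal X^{\sigma,n}_s(T_{v^\sigma_n s}\mathfrak e^\tau_{k_1})\,\mathcal X^{\sigma,n}_s(T_{v^\sigma_n s}\mathfrak e^\tau_{k_2})$ times the oscillating phase $e^{2\pi i(k_1+k_2)(\tilde v_n-v^\sigma_n)s}$. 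The required H\"older estimate for this product follows immediately from the \emph{linear} H\"older bound (which \emph{is} delivered by the martingale decomposition of \cref{sec:sketch} and the tightness estimates) together with Cauchy--Schwarz. The translation trick then applies for each $(k_1,k_2)$ with $k_1+k_2\neq 0$, and dominated convergence in $(k_1,k_2)$ closes the argument. This double-Fourier factorization is the missing piece in your sketch.
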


\begin{proof}
\Add{ 
First, we note that by the second-order Boltzmann-Gibbs principle (\cref{prop:2bg}),
we can replace $\overline{\xi}^\sigma_j\overline\xi^\sigma_{j+1}$ by the square of local averages:
for any $\delta>0$
\begin{equation}
\label{eq:6}
\begin{split}
\lim_{n\to\infty} \mathbb E_n\bigg[ \sup_{0\le t\le T} 
\bigg| 
\int_0^t \sum_{j\in\mathbb Z} 
\Big(\overline\xi^\sigma_j(s)\overline\xi^\sigma_{j+1}(s) 
- \big((\overrightarrow{\xi^\sigma})^{[\delta n]}_j(s) \big)^2 \Big)
\varphi(\tfrac{j}{n}+\tilde{v}_ns )  ds \bigg|^2 \bigg]
\le \delta\; CT \| \varphi\|^2_{L^2(\mathbb R)}.
\end{split}
\end{equation}
For the local average, we can easily see that we have an identity 
\begin{equation*}
\mathcal X^{\sigma,n}_t\big(T^-_{j/n}\iota_\delta \big) 
= \frac{1}{\sqrt{n}}
\sum_{j'\in\mathbb Z} 
\overline{\xi}^\sigma_{j'}(t) 
\iota_\delta(\tfrac{j'-j}{n}) 
= \sqrt{n} (\overrightarrow{\xi^\sigma})^{[\delta n]}_j
\end{equation*}
where 
$\iota_\delta(\cdot) = \delta^{-1} \mathbf{1}_{[0,\delta)}(\cdot)$ and $T^\pm_{a}G(\cdot) = G(\cdot\pm a)$ for any $a\in\mathbb R$ and $G\in C(\mathbb R)$. 
Then we only have to show that
\begin{equation}
\label{eq:local_average_term}
\lim_{\delta\to 0}
\limsup_{n\to\infty} \mathbb E_n\bigg[ \sup_{0\le t\le T} 
\bigg| \int_0^t \frac{1}{n} \sum_{j\in\mathbb Z} 
\mathcal X^{\sigma,n}_s \big(T^-_{j/n}\iota_\delta\big)^2 \varphi(\tfrac{j}{n}+\tilde{v}_ns ) ds
\bigg|\bigg] = 0. 
\end{equation}
Analogously to the proof of~\cref{lem:field_with_wrong_velocity},
let $\psi$ be the Friedrich mollifier and set $\iota_\delta^\tau=\iota_\delta\ast\psi_\tau$
where $\psi_\tau(x)=\tau^{-1}\psi(\tau^{-1}x)$ for each $\tau>0$ and decompose 
\begin{equation}
\label{eq:local_average_term_decomposition}
\begin{aligned}
\mathcal X^{\sigma,n}_s \big(T^-_{j/n}\iota_\delta\big)^2
=& \mathcal X^{\sigma,n}_s \big(T^-_{j/n}\iota^\tau_\delta\big)^2
+ \big[ \mathcal X^{\sigma,n}_s \big(T^-_{j/n} \iota_\delta\big)^2-
\mathcal X^{\sigma,n}_s \big(T^-_{j/n}\iota^\tau_\delta\big)^2 \big]. 
\end{aligned}
\end{equation}
Regarding the second term in the right-hand side of the last display, note that  
\begin{equation*}
\begin{aligned}
& \mathbb E_n\bigg[\sup_{0\le t\le T}
\bigg|\int_0^t \frac{1}{n} \sum_{j\in\mathbb Z} 
\big[ \mathcal X^{\sigma,n}_s \big(T^-_{j/n}\iota_\delta\big)^2-\mathcal X^{\sigma,n}_s \big(T^-_{j/n}\iota^\tau_\delta\big)^2\big]
\varphi(\tfrac{j}{n}+\tilde{v}_ns ) ds
\bigg| \bigg]\\
&\le \int_0^T \frac{1}{n}\sum_{j\in\mathbb Z} 
\mathbb E_n\Big[ \big| \mathcal X^{\sigma,n}_s \big(T^-_{j/n}\iota_\delta\big)^2-\mathcal X^{\sigma,n}_s \big(T^-_{j/n}\iota^\tau_\delta\big)^2\big|\Big]  \big| \varphi(\tfrac{j}{n}+\tilde{v}_ns ) \big| ds\\
&\le \int_0^T \frac{1}{n}\sum_{j\in\mathbb Z} 
\mathbb E_n\Big[ \big| \mathcal X^{\sigma,n}_s \big(T^-_{j/n}\iota_\delta\big)
-\mathcal X^{\sigma,n}_s \big(T^-_{j/n}\iota^\tau_\delta\big)\big|
\big| \mathcal X^{\sigma,n}_s \big(T^-_{j/n}\iota_\delta\big)
+\mathcal X^{\sigma,n}_s \big(T^-_{j/n}\iota^\tau_\delta\big)\big|\Big]  \big| \varphi(\tfrac{j}{n}+\tilde{v}_ns ) \big| ds
\\
&\le \int_0^T \Big(\frac{1}{n}\sum_{j\in\mathbb Z} 
\mathbb E_n\Big[  \Big|\mathcal X^{\sigma,n}_s \big(T^-_{j/n}(\iota_\delta -\iota^\tau_\delta)\big)\Big|^2\Big]^{1/2}
\mathbb E_n\Big[\Big|\mathcal X^{\sigma,n}_s \big(T^-_{j/n}(\iota_\delta + \iota^\tau_\delta)\big)\Big|^2 \Big]^{1/2}
\big| \varphi(\tfrac{j}{n}+\tilde{v}_ns )\big| \Big) ds
\\
&\le CT \| \iota_\delta-\iota_\delta^\tau\|_{L^2(\mathbb R)}
\| \iota_\delta\|_{L^2(\mathbb R)}
\| \varphi\|_{L^1(\mathbb R)}
\end{aligned}
\end{equation*}
for some $C>0$, which is independent of $n$.  
In particular, note that the utmost right-hand side of the last estimate vanishes as $\tau\to0$ for any $\delta> 0$.
}
\Add{ 
Thus, our task is now to show that 
\begin{equation*}
\lim_{\delta\to 0}\limsup_{n\to\infty} \mathbb E_n\bigg[ \sup_{0\le t\le T} 
\bigg| \int_0^t 
\int_{\mathbb R} 
\widetilde{\mathcal X}^{\sigma,n}_s \big(T^-_{x}\iota^\tau_\delta\big)^2 \varphi(x+\tilde{v}_ns ) dx ds \bigg|\bigg] = 0
\end{equation*}
where we replaced the field $\mathcal X^{n,\sigma}$ by $\widetilde{\mathcal X}^{n,\sigma}$ by a crude $L^2$-estimate. 
To this end, by a change of variable, it is enough to show for any $\tau>0$,   
\begin{equation*}
\lim_{\delta \to 0} 
\limsup_{n\to\infty}
\sup_{x\in\mathbb R} 
\mathbb E_n\bigg[
\sup_{0\le t\le T}
\bigg| \int_0^t \widetilde{\mathcal X}^{\sigma,n}_s(T^+_{v^\sigma_ns-x} \iota^\tau_\delta)^2 
\varphi(x+(\tilde{v}_n-v_n^\sigma)s)  ds \bigg|\bigg]
= 0 
\end{equation*}
where we used Fubini's theorem and the dominated convergence theorem. 
However, the proof of the last assertion can be done in a similar way as \eqref{eq:riemann_lebesgues_neglecting_riemann_sum}, noting that we have the following H\"older estimate: for any $\varphi\in\mathcal S(\mathbb R)$,  
\begin{equation*}
\begin{aligned}
\mathbb E_n \Big[ \big| 
\widetilde{\mathcal X}^{\sigma,n}_t (\varphi_{v^\sigma_nt})
- \widetilde{\mathcal X}^{\sigma,n}_s
(\varphi_{v^\sigma_ns}) \big|^2\Big]
\le C(T,\varphi) |t-s|
\end{aligned}
\end{equation*}
with some $C(T,\varphi)>0$.
This estimate follows by the martingale decomposition and by handling each term separately, and the proof is analogous to \eqref{eq:fluctuation_field_hoelder}.  
Hence we complete the proof of the assertion. 
}
\end{proof}

Finally, note that we can show the following result for cross-fields. 

\begin{lemma}
\label{lem:quadratic_field_with_wrong_velocity_cross_fields}
Let $\{\tilde{v}_n\}_{n\in\mathbb N}$ be a real sequence such that
$\lim_{n\to\infty}|\tilde{v}_n-v^\sigma_n|=\infty$ for each $\sigma=\pm$ .
Then, we have that 
\begin{equation*}
\limsup_{n\to\infty} 
\mathbb E_n \bigg[\sup_{0\le t\le T}\bigg| \int_0^t 
\sum_{j\in\mathbb Z}
\overline{\xi}^-_j(s)\overline{\xi}^+_{j+1}(s) \varphi _{\tilde{v}_ns} (\tfrac{j}{n}) ds  \bigg| \bigg] =0. 
\end{equation*}
\end{lemma}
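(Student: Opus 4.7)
The argument closely parallels that of \cref{lem:quadratic_field_with_wrong_velocity}, the only new feature being that the two factors $\overline{\xi}^-_j$ and $\overline{\xi}^+_{j+1}$ now transport with \emph{opposite} sound velocities $v^-_n$ and $v^+_n$. I will proceed in three steps: a cross version of the second-order Boltzmann-Gibbs principle, a Fourier expansion making the new oscillatory phase explicit, and a Riemann-Lebesgue estimate on the resulting time integral.

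\textbf{Step 1 (reduction).} Expanding
\[
\overline{\xi}^-_j\overline{\xi}^+_{j+1}
= c_2\,\overline{r}_{j+1}\overline{r}_{j+2}
+ \sqrt{c_2}\,\overline{r}_{j+1}p_{j+1}
- \sqrt{c_2}\,p_j\overline{r}_{j+2}
- p_j p_{j+1},
\]
I apply to each summand separately the one- and two-block estimates of \cref{lem:one_block_estimate} and \cref{lem:two_block_estimate}. This replaces $\overline{\xi}^-_j\overline{\xi}^+_{j+1}$ by the product of local averages $(\overrightarrow{\xi^-})^{[\delta n]}_j(\overrightarrow{\xi^+})^{[\delta n]}_j$ up to an error of order $\delta CT\|\varphi\|^2_{L^2(\mathbb R)}$, and the problem reduces to the vanishing (as $n\to\infty$ then $\delta\to 0$) of
\[
\mathbb E_n\bigg[\sup_{0\le t\le T}\bigg|\int_0^t\frac{1}{n}\sum_{j\in\mathbb Z}\mathcal X^{-,n}_s(T^-_{j/n}\iota_\delta)\,\mathcal X^{+,n}_s(T^-_{j/n}\iota_\delta)\,\varphi(\tfrac{j}{n}+\tilde{v}_n s)\,ds\bigg|\bigg].
\]

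\textbf{Steps 2--3 (Fourier and Riemann-Lebesgue).} Mollifying $\iota_\delta$ into $\iota^\tau_\delta\in\mathcal S(\mathbb R)$ (the replacement error vanishing as $\tau\to 0$ by Cauchy-Schwarz and uniform $L^2$-control of the linear fields), and expanding in Fourier as in the proof of \cref{lem:quadratic_field_with_wrong_velocity} but with two \emph{different} recenterings $v^-_n$ and $v^+_n$, I reduce the main term, up to a controllable Riemannian-sum error, to
\[
\int_{\mathbb R^2}\widehat{\iota_\delta}(k_1)\widehat{\iota_\delta}(k_2)\widehat{\varphi}(k_1+k_2)\,\mathcal X^{-,n}_s(T_{v^-_n s}\mathfrak e^\tau_{k_1})\mathcal X^{+,n}_s(T_{v^+_n s}\mathfrak e^\tau_{k_2})\,e^{2\pi\mathsf{i} K_n(k_1,k_2)s}\,dk_1\,dk_2,
\]
with the new oscillatory phase $K_n(k_1,k_2)=k_1(\tilde{v}_n-v^-_n)+k_2(\tilde{v}_n-v^+_n)$. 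On the set where $|K_n(k_1,k_2)|\to\infty$ I run the shift-by-half-period argument of \cref{lem:quadratic_field_with_wrong_velocity} applied to the \emph{product} of the two recentered fields; the product H\"older bound follows by Cauchy-Schwarz from the single-mode H\"older estimate to be established in \cref{sec:tightness}. Dominated convergence in $(k_1,k_2)$, justified by integrability of $\widehat{\iota_\delta}\otimes\widehat{\iota_\delta}\otimes\widehat{\varphi}$ and the uniform $L^2$-bound on the fields, then concludes.

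\textbf{Main obstacle.} The new difficulty, absent from \cref{lem:quadratic_field_with_wrong_velocity} where the phase factorised as $(k_1+k_2)(\tilde{v}_n-v^\sigma_n)$, is that $K_n(k_1,k_2)=k_1 A_n+k_2 B_n$ with $A_n=\tilde{v}_n-v^-_n$ and $B_n=\tilde{v}_n-v^+_n$ can stay bounded on entire lines of the $(k_1,k_2)$-plane --- for instance on the diagonal $k_1=k_2$ when $\tilde{v}_n\equiv 0$, since $v^+_n+v^-_n=0$. Since $|A_n|,|B_n|\to\infty$ by hypothesis, a subsequential argument tracking the possible limits of the ratio $A_n/B_n$ in $[-\infty,\infty]$ shows that the "resonant" set where $|K_n|$ does not diverge is contained in a countable union of straight lines, hence is Lebesgue-null; dominated convergence then applies on its complement. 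Verifying this almost-everywhere divergence is the only genuinely new point of the proof.
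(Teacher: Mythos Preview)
Your approach is essentially the paper's: the paper only gives a one-sentence hint (``the quadratic field of $\overline{\xi}^-_j\overline{\xi}^+_{j+1}$ can be rewritten as a product of fluctuation fields of two different phonon modes, one of which should be looked at in a moving frame to a wrong direction''), and your block-replacement plus Fourier plus Riemann--Lebesgue strategy is exactly what makes this precise. You have also correctly identified the genuinely new feature, namely that the phase $K_n(k_1,k_2)=k_1A_n+k_2B_n$ with $A_n=\tilde v_n-v^-_n$, $B_n=\tilde v_n-v^+_n$ no longer factors as $(k_1+k_2)$ times a single diverging sequence.

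One correction to your resolution of that obstacle: the claim that the resonant set is contained in a \emph{countable} union of lines is not correct as stated, since the set of subsequential limits of $A_n/B_n$ can be a full interval (take e.g.\ $\tilde v_n$ oscillating between $0$ and $\tfrac12\sqrt{c_2}\alpha n$). The subsequential argument you allude to should instead be phrased as follows: if the $\limsup$ were positive, extract a subsequence along which the expectation stays bounded below and, by compactness, along which $A_n/B_n\to\rho\in[-\infty,\infty]$; along \emph{that} subsequence one has $|K_n(k_1,k_2)|\to\infty$ for all $(k_1,k_2)$ off the \emph{single} line $k_1\rho+k_2=0$ (or $k_1=0$ if $\rho=\pm\infty$), so dominated convergence together with the shift-by-half-period Riemann--Lebesgue step yields a contradiction. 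With this adjustment your argument is complete.
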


This result holds according to the fact that the quadratic field of $\overline{\xi}^-_j\overline{\xi}^+_{j+1}$
can be rewritten as a product of fluctuation fields of two different phonon modes,
one of which should be looked at in a moving frame
to a wrong direction.  
Moreover, note that the same assertion clearly holds when we replace
$\overline{\xi}^-_j\overline{\xi}^+_{j+1}$ by $\overline{\xi}^+_j\overline{\xi}^-_{j+1}$.

\subsection{Characterization of Martingale Terms} 
Recall \eqref{eq:carre_du_champs_computation}. 
Next, let us compute the covariance of the martingales $\mathcal{M}^{+,n}$ and $\mathcal{M}^{-,n}$. 
The computation of this part implies that the two white noises in the limiting equation are independent. 
For each $\overrightarrow{\varphi}=(\varphi^+ ,\varphi^-)\in\mathcal{S}(\mathbb R)^2$, we set 
\begin{equation*}
\mathcal{Z}^n_t (\overrightarrow \varphi) 
= \mathcal{X}^{+,n}_t (\varphi^+ _{v^+_nt})
+ \mathcal{X}^{-,n}_t (\varphi^-_{v^-_nt}).
\end{equation*}
By Dynkin's martingale formula, 
\begin{equation*}
\mathcal N^n_t (\overrightarrow{\varphi})
= \mathcal{Z}^n_t(\overrightarrow{\varphi})
- \mathcal{Z}^n_0(\overrightarrow{\varphi})
- \int_0^t (\partial_s + L_n) \mathcal{Z}^n_s (\overrightarrow{\varphi})ds 
\end{equation*}
and $\mathcal N^n_t(\overrightarrow{\varphi})^2- \langle \mathcal{N}^n(\overrightarrow{\varphi})\rangle_t $ are martingales where 
\begin{equation*}
\langle\mathcal{N}^n(\overrightarrow{\varphi})\rangle_t
= \int_0^t \big( L_n \mathcal{Z}^n_s(\overrightarrow{\varphi})^2 
- 2 \mathcal{Z}^n_s(\overrightarrow{\varphi}) L_n \mathcal{Z}^n_s(\overrightarrow{\varphi}) \big) ds .
\end{equation*}
Note that the anti-symmetric part does not contribute to the martingale part. 
This is based on the fact that the operator $A$ is a linear combination of continuous first-order differential operators. 
Then, we compute 
\begin{equation*}
\begin{aligned}
&L_n \mathcal{Z}^n_t(\overrightarrow{\varphi})^2 - 2 \mathcal{Z}^n_t(\overrightarrow{\varphi}) L_n \mathcal{Z}^n_t(\overrightarrow{\varphi})\\ 
&= \sum_{\sigma = \pm} \big( L_n\mathcal{X}^{\sigma,n}_t(\varphi^\sigma _{v^\sigma_nt})^2 
- 2 \mathcal{X}^{\sigma,n}_t(\varphi^\sigma _{v^\sigma_nt}) L_n \mathcal{X}^{\sigma,n}_t (\varphi^1 _{v^\sigma_nt}) \big)\\
&\quad + 2L_n\big( \mathcal{X}^{+,n}_t(\varphi^+ _{v^+_nt}) \mathcal{X}^{-,n}_t(\varphi _{v^-_nt}^-) \big)
- 2 \mathcal{X}^{+,n}_t(\varphi^+ _{v^+_nt}) L_n \mathcal{X}^{-,n}_t (\varphi _{v^-_nt}^-)
- 2 \mathcal{X}^{-,n}_t(\varphi _{v^-_nt}^-) L_n \mathcal{X}^{+,n}_t (\varphi _{v^+_nt}^+)  \\
&= \frac{1}{2} \sum_{\sigma = \pm} \sum_{j\in\mathbb Z} 
(\xi^{\sigma}_j - \xi^\sigma_{j+1})^2 (\nabla^n \varphi^\sigma _{v^\sigma_nt}(\tfrac{j}{n}))^2 \\
&\quad+ \frac{1}{n} \sum_{j\in\mathbb Z} 
(\xi^{+}_j - \xi^+_{j+1})(\xi^-_j -\xi^-_{j+1}) 
\big(\nabla^n \varphi^+ _{v^+_nt}(\tfrac{j}{n})\big)
\big(\nabla^n \varphi _{v_n^-t}^-(\tfrac{j}{n})\big).
\end{aligned}
\end{equation*}
Then, we can easily see that as $n\to\infty$, the first term
in the utmost right-hand side of the last display converges to
$2\beta^{-1}\sum_{\sigma= \pm} \| \partial_x \varphi^\sigma \|^2_{L^2(\mathbb R)}$.
Above, note that $\mathrm{Var}_{\nu_n}[\xi^\pm_0]=2\beta^{-1}$. 
On the other hand, the second term of the last display vanishes as $n\to\infty$ due to the following result. 

\begin{lemma}
Let $\{a_n\}_{n\in\mathbb N}$ be a real sequence such that $\lim_{n\to\infty}a_n =\infty$. 
Then we have that 
\begin{equation*}
\limsup_{n\to\infty} 
\bigg| \frac{1}{n}\sum_{j\in\mathbb Z}
\varphi^{1} \Big(\frac{j}{n}\Big) \varphi^2 \Big(\frac{j}{n}+a_n\Big) \bigg|=0 .
\end{equation*}
\end{lemma}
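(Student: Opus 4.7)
The plan is to approximate the Riemann sum by the integral
\[
J_n := \int_{\mathbb R} \varphi^1(x)\, \varphi^2(x + a_n)\, dx,
\]
and then to invoke a Riemann--Lebesgue-type argument exploiting the Schwartz decay of the test functions to conclude that $J_n\to 0$.

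First I would show that the discrepancy between the Riemann sum $I_n := \frac 1n \sum_{j\in\mathbb Z} \varphi^1(j/n)\,\varphi^2(j/n+a_n)$ and $J_n$ is of order $1/n$, uniformly in $a_n$. Setting $f_n(x) := \varphi^1(x)\,\varphi^2(x+a_n)$, a standard interval-by-interval error estimate yields
\[
|I_n - J_n| \;\le\; \frac{1}{n}\,\|f_n'\|_{L^1(\mathbb R)},
\]
and by the product rule together with the translation invariance of the $L^1$ norm,
\[
\|f_n'\|_{L^1(\mathbb R)} \;\le\; \|(\varphi^1)'\|_{L^1}\,\|\varphi^2\|_{L^\infty} + \|\varphi^1\|_{L^\infty}\,\|(\varphi^2)'\|_{L^1},
\]
which is finite and independent of $a_n$ since $\varphi^1,\varphi^2\in\mathcal S(\mathbb R)$.

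Next I would show $J_n\to 0$ by a soft splitting argument. Given $\varepsilon>0$, I pick $M>0$ large enough that $\int_{|x|>M}|\varphi^1(x)|\,dx < \varepsilon$, which is possible by the Schwartz decay of $\varphi^1$. On the complementary region $\{|x|\le M\}$, once $n$ is large enough that $a_n>2M$ we have $|x+a_n|\ge a_n/2$, and the Schwartz decay of $\varphi^2$ gives
\[
\sup_{|x|\le M}|\varphi^2(x+a_n)| \;\le\; C_{\varphi^2}\,(1+a_n/2)^{-1} \longrightarrow 0.
\]
Combining these estimates yields $\limsup_{n\to\infty} |J_n| \le \|\varphi^2\|_{L^\infty}\,\varepsilon$, and letting $\varepsilon\downarrow 0$ finishes the proof.

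I do not anticipate any serious obstacle here: the argument is a routine combination of a Riemann-sum approximation with the Riemann--Lebesgue mechanism for translates of Schwartz functions, and no dynamical or probabilistic input is required.
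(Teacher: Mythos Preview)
Your argument is correct. It takes a genuinely different route from the paper's proof, which proceeds via Fourier analysis: the paper writes each $\varphi^k$ through its Fourier inversion formula, applies the Poisson summation identity $\sum_{j\in\mathbb Z} e^{2\pi i j\xi}=\delta(\xi)$ to collapse the $j$-sum, and after a linear change of frequency variables identifies $\frac{1}{n}\sum_j \varphi^1(j/n)\varphi^2(j/n+a_n)$ exactly with $\psi(-a_n/2)$ for a function $\psi\in\mathcal S(\mathbb R)$ determined by $\widehat\psi(\xi)=\tfrac12\widehat{\varphi^1}(\xi/2)\widehat{\varphi^2}(-\xi/2)$; the conclusion then follows from $\psi(x)\to 0$ as $|x|\to\infty$. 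Your approach is more elementary and self-contained: it avoids distributional Fourier machinery entirely, replacing it by a uniform Riemann-sum error bound and a direct tail-splitting argument on the integral $J_n$. What the paper's method buys is an explicit closed-form identification of the quantity as the value of a single Schwartz function at a diverging point, which makes the decay immediate; what your method buys is robustness and transparency, since it only uses $\varphi^1,\varphi^2\in C^1$ with integrable derivatives and suitable decay, rather than the full Schwartz regularity or any Fourier input.
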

\begin{proof}
We expand the test functions as a series of exponential functions as 
\begin{equation*}
\varphi^k (x) = 
\int_{\mathbb R} \widehat \varphi^k(\xi) e^{2\pi \mathsf i \xi x} d\xi  
\end{equation*}
where $\widehat \varphi^k(\xi)$
is the Fourier transform of $\varphi^k$ for each $k=1,2$. 
Note that we have the relation 
$\sum_{j\in\mathbb Z} e^{2\pi\mathsf i j \xi} = \delta(\xi)$
where $\delta$ is the delta measure. 
Then, we have that
\begin{equation*}
\begin{aligned}
\frac{1}{n}\sum_{j\in\mathbb Z}
\varphi^1\Big(\frac{j}{n}\Big)
\varphi^2\Big(\frac{j}{n}+a_n\Big)
&= \frac{1}{n}\sum_{j\in\mathbb Z}
\int_{\mathbb R^2}
\widehat \varphi^1(\xi_1) \widehat \varphi^2(\xi_2)
e^{2\pi\mathsf i \frac{j}{n} \xi_1} 
e^{2\pi\mathsf i (\frac{j}{n}+a_n)\xi_2} d\xi_1 d\xi_2 \\
&= \frac{1}{n}\int_{\mathbb R^2}
\widehat \varphi^1(\xi_1) \widehat \varphi^2(\xi_2)
\delta\Big(\frac{\xi_1+\xi_2}{n}\Big)
e^{2\pi\mathsf i a_n\xi_2} d\xi_1 d\xi_2 \\
&= \frac{1}{2n}\int_{\mathbb R^2}
\widehat \varphi^1\Big(\frac{\eta_1+\eta_2}{2}\Big) 
\widehat \varphi^2\Big(\frac{\eta_1-\eta_2}{2}\Big)
\delta\Big(\frac{\eta_1}{n}\Big)
e^{2\pi\mathsf i a_n\frac{\eta_1-\eta_2}{2}} d\eta_1 d\eta_2 \\
&= \frac{1}{2} \int_\mathbb R
\widehat \varphi^1(\eta_2/2) 
\widehat \varphi^2(-\eta_2/2) 
e^{2\pi\mathsf i (-a_n/2)\eta_2} d\eta_2 .
\end{aligned}
\end{equation*}
By a standard Fourier inversion formula, we notice that the utmost right-hand
side of the last display is rewritten as $\psi(-a_n/2)$ for some $\psi\in\mathcal{S}(\mathbb R)$
which satisfies 
\begin{equation*}
\widehat \psi (\xi)
=\frac{1}{2} \widehat\varphi^1(\xi/2)
\widehat\varphi^2(-\xi/2). 
\end{equation*}
Since $|\psi(x)|\to0$ as $|x|\to \infty$, we complete the proof.
\end{proof}

\section{Tightness}
\label{sec:tightness}
In this section, we show that each sequence in the decomposition \eqref{eq:martingale_decomposition} is tight. 

\subsection{Preliminaries}
In this part, for readers' convenience, we recall some basic notions of the Skorohod space and the tightness of a sequence in the c\`{a}dl\`{a}g space. 
To begin with a general setting, let $E$ be a complete, separable metric space, endowed with a distance $d_E$.
Let $D([0,T],E)$ be the space of all right continuous functions with left limits taking values on $E$. 
Let $\lambda $ be the set of all strictly increasing continuous functions $\lambda$ from $[0,T]$ into itself. 
Then, we define 
\begin{equation*}
\| \lambda \| 
= \sup_{s\neq t} \bigg| \log \frac{\lambda(t)-\lambda(s)}{t-s} \bigg|
\end{equation*}
and define for each $X,Y\in D([0,T],E)$ 
\begin{equation*}
d(X,Y)
= \inf_{\lambda \in\Lambda} \max \big\{ \| \lambda \|, \sup_{0\le t\le T} d_E(X_t, Y_{\lambda(t)}) \big\}. 
\end{equation*}
Then it is known that the Skorohod space $D([0,T],E)$ endowed with the metric $d$ is a complete separable metric space, see \cite[Chapter 3]{billingsley1968convergence}. 
Next, in order to characterize the convergence of a sequence of paths in the Skorohod space, we make use of the following modified modulus of continuity: for each $X=\{X_t:t\in[0,T]\} \in D([0,T],E)$, set 
\begin{equation*}
w^\prime_X(\gamma)
= \inf_{\{t_i\}_{0\le t \le N}}
\max_{0\le i <N} \sup_{t_i\le s<t <t_{i+1}}
d_E(X_s, X_t),
\end{equation*}
where the first infimum is taken over all partitions $\{ t_i\}_{0\le i\le N}$ of the interval $[0,T]$ such that $0=t_0<t_1<\cdots <t_N=T$ and $t_i-t_{i-1}>\gamma$ for each $i=1,\ldots, N$. 
Then, the relative compactness of a sequence in the Skorohod space is characterized by the following Prohorov's theorem \cite[Theorem 4.1.3]{kipnis1998scaling}. 

\begin{proposition}
\label{prop:prohorov}
Let $\{ \mathbb{P}_n\}_n$ be a sequence of probability measures on $D([0,T],E)$. 
The sequence is relatively compact if, and only if,
\begin{itemize}
\item[(1)] For each $t\in[0,T]$ and each $\delta>0$ there exists a compact set $K(t,\delta)$ in $E$ such that $\mathbb{P}_n(X_t \notin K(t,\delta))\le \delta$. 
\item[(2)] For each $\delta>0$, we have $
\lim_{\gamma\to0} \limsup_{n\to \infty}
\mathbb{P}_n (w^\prime_X(\gamma) > \delta) = 0$. 
\end{itemize}
\end{proposition}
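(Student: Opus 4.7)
The plan is to prove this characterization along the classical Billingsley lines, combining two ingredients: the abstract Prohorov theorem on Polish spaces (relative compactness of a family of probability measures is equivalent to tightness) and the Arzelà--Ascoli-type characterization of compact subsets of $D([0,T], E)$, which asserts that $\mathcal{K} \subset D([0,T], E)$ has compact closure if and only if (i) for each $t \in [0,T]$ the set $\{X_t : X \in \mathcal{K}\}$ is relatively compact in $E$, and (ii) $\lim_{\gamma \downarrow 0} \sup_{X \in \mathcal{K}} w'_X(\gamma) = 0$. This deterministic compactness criterion is the content of Billingsley's Theorem 12.3 and is where the specific geometry of the Skorohod topology enters; I would take it as a black box.

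For the necessity direction, assume $\{\mathbb{P}_n\}$ is relatively compact. Abstract Prohorov gives tightness: for every $\delta > 0$ there is a compact $\mathcal{K}_\delta \subset D([0,T], E)$ with $\sup_n \mathbb{P}_n(\mathcal{K}_\delta^c) \le \delta$. Setting $K(t,\delta) = \overline{\{X_t : X \in \mathcal{K}_\delta\}}$, which is relatively compact in $E$ by (i) of the compactness criterion, yields condition (1). Condition (2) follows from (ii) applied to $\mathcal{K}_\delta$ together with the Markov inequality, since $w'_X(\gamma)$ converges uniformly to $0$ on $\mathcal{K}_\delta$.

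For the sufficiency direction, given (1) and (2), I would manufacture a compact set absorbing mass at least $1 - \varepsilon$ under every $\mathbb{P}_n$. Fix a countable dense $\{t_k\}_{k \ge 1} \subset [0,T]$ containing $0$ and $T$. By (1), pick compact $K_k \subset E$ with $\sup_n \mathbb{P}_n(X_{t_k} \notin K_k) \le \varepsilon 2^{-k-1}$; by (2), pick $\gamma_m \downarrow 0$ with $\sup_n \mathbb{P}_n(w'_X(\gamma_m) > 1/m) \le \varepsilon 2^{-m-1}$. Define
\begin{equation*}
\mathcal{K}_\varepsilon = \Bigl(\bigcap_{k \ge 1} \{X : X_{t_k} \in K_k\}\Bigr) \cap \Bigl(\bigcap_{m \ge 1} \{X : w'_X(\gamma_m) \le 1/m\}\Bigr).
\end{equation*}
The union bound gives $\mathbb{P}_n(\mathcal{K}_\varepsilon) \ge 1 - \varepsilon$, uniformly in $n$; it remains to verify that $\mathcal{K}_\varepsilon$ satisfies the criteria (i) and (ii) of the deterministic compactness theorem. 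Condition (ii) is immediate from the definition; for (i) at an arbitrary $t \in [0,T]$, one must approximate $X_t$ by some $X_{t_k}$ and argue that $X_t$ lies in a controlled enlargement of $K_k$, hence in a compact subset of $E$.

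The main obstacle will be precisely the verification of (i) at general $t$: because càdlàg paths can jump, one cannot simply use continuity to bound $d_E(X_t, X_{t_k})$ by $w'_X$. The modified modulus is designed to permit one jump per subinterval of a sufficiently fine partition, so the argument should exploit an optimal partition $\{s_i\}$ realizing $w'_X(\gamma_m)$: either $t$ and a nearby $t_k$ fall in the same subinterval $[s_i, s_{i+1})$, in which case $d_E(X_t, X_{t_k}) \le 1/m$, or they are separated by a single partition point, in which case $X_t$ is close to $X_{t_{k'}}$ for another $t_{k'}$ on the other side, placing $X_t$ in the $1/m$-neighborhood of $K_k \cup K_{k'}$. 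Taking the union over the finitely many relevant indices, still a compact set, then yields (i).
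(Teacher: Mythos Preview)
The paper does not prove this proposition; it simply cites it as \cite[Theorem 4.1.3]{kipnis1998scaling} and uses it as a black box. Your proposal is a faithful reconstruction of the classical Billingsley/Ethier--Kurtz proof, combining abstract Prohorov with the Arzel\`a--Ascoli characterization of relatively compact sets in $D([0,T],E)$, and is correct in outline.

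One point deserves more care: in your final paragraph you assert that ``finitely many relevant indices'' $t_{k}$ suffice to control $\{X_t : X\in\mathcal K_\varepsilon\}$ at an arbitrary $t$. The difficulty is that the optimal partition $\{s_i\}$ realizing $w'_X(\gamma_m)\le 1/m$ depends on $X$, so the nearby $t_k$ you land on may vary with $X$ across all of $\mathcal K_\varepsilon$. The standard remedy is to replace the dense set by, for each $m$, a \emph{finite} $\gamma_m$-net $\{t^{(m)}_1,\dots,t^{(m)}_{N_m}\}$ of $[0,T]$ (still countably many conditions overall). Then for any $X\in\mathcal K_\varepsilon$ and any $t$, the subinterval $[s_{i-1},s_i)$ containing $t$ has length $>\gamma_m$ and therefore contains some $t^{(m)}_j$, giving $d_E(X_t,X_{t^{(m)}_j})\le 1/m$; since there are only $N_m$ candidates, $X_t$ lies in the closed $1/m$-neighborhood of the compact set $\bigcup_{j\le N_m} K^{(m)}_j$, which is itself compact. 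With this adjustment your argument goes through.
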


Here note that the modulus of continuity has the bound $w^\prime_X(\gamma) \le w_X(2\gamma)$ where 
\begin{equation*}
w_X(\gamma) = \sup_{|t-s|\le \gamma} d_E(X_s,X_t)
\end{equation*}
for each $X\in D([0,T],E)$. 
Therefore, to show that a sequence in the Skorohod space is relatively compact, which is equivalent to the sequence being tight since the space is complete and separable, it suffices to show the following condition (2') instead of the condition (2) in Proposition \ref{prop:prohorov}. 

\begin{itemize}
\item[(2')]  For each $\delta>0$, we have $
\lim_{\gamma\to0} \limsup_{n\to \infty}
\mathbb{P}_n (w_X(\gamma) > \delta) = 0$. 
\end{itemize}

{Note that once the condition (2') is verified, combined with the condition (1) of Proposition \ref{prop:prohorov}, then all limit points of a sequence $\{\mathbb{P}_n\}_n$ are concentrated on continuous paths.
}  

Now we return to our current situation and recall the martingale decomposition \eqref{eq:martingale_decomposition}. 
Our central aim is to show the tightness of each sequence. 

\begin{lemma}
\label{lem:tightness}
For each $\sigma=\pm$, the sequences $\{\widetilde{\mathcal{X}}^{\sigma,n}_t : t \in [0, T ] \}_{ n \in \mathbb{N} } $, $\{ \mathcal{M}^{\sigma,n}_t : t \in [0, T ] \}_{ n \in \mathbb{N} } $, $\{ \mathcal{S}^{\sigma,n}_t : t \in [0, T ] \}_{ n \in \mathbb{N} } $
and $\{ \mathcal{B}^{\sigma,n}_t : t \in [0, T ] \}_{ n \in \mathbb{N} } $,
when the processes start from the invariant measure $\nu_n$,
are tight with respect to the Skorohod topology on $D([0,T],\mathcal{S}^\prime (\mathbb{R})) $.
\end{lemma}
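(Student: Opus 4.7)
The plan is to invoke Mitoma's criterion to reduce tightness in $D([0,T], \mathcal{S}'(\mathbb{R}))$ to tightness in $D([0,T], \mathbb{R})$ of the real-valued processes obtained by pairing each of the four fields with a fixed test function $\varphi \in \mathcal{S}(\mathbb{R})$. For each such real-valued sequence I would verify Aldous' criterion together with a uniform second-moment bound at fixed times, the latter following trivially from stationarity under $\nu_n$ and the exponential moments provided by \cref{lem:static_estimate}.

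For the martingale $\mathcal{M}^{\sigma,n}_\cdot(\varphi)$ the essential input is \eqref{eq:quadratic_variation_simple}: by stationarity, the expectation of the dominant term in the quadratic variation grows linearly in time with a constant proportional to $\|\partial_x \varphi\|_{L^2(\mathbb{R})}^2$, and the reminder $\widetilde{\mathcal{R}}^{\sigma,n}_\cdot$ vanishes by \cref{lem:martingale_decomposition_main}. Optional sampling then yields $\mathbb{E}_n[|\mathcal{M}^{\sigma,n}_{\tau+\delta}(\varphi) - \mathcal{M}^{\sigma,n}_{\tau}(\varphi)|^2] \le C\delta$ for any bounded stopping time $\tau$, which gives Aldous' condition via Chebyshev; Doob's maximal inequality takes care of the pointwise second moment. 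For the linear drift $\mathcal{S}^{\sigma,n}_\cdot(\varphi)$, whose two summands are time integrals of $\widetilde{\mathcal{X}}^{\sigma,n}_s(\partial_x^2 \varphi_{v^\sigma_n s})$ and $\widetilde{\mathcal{X}}^{\sigma,n}_s(\partial_x \varphi_{v^\sigma_n s})$, Cauchy-Schwarz combined with the uniform bound $\mathbb{E}_n[|\widetilde{\mathcal{X}}^{\sigma,n}_s(\psi)|^2] \le C\|\psi\|_{L^2(\mathbb{R})}^2$ coming from stationarity produces an increment estimate of order $(t-s)^2$, hence tightness with a continuous limit.

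The main obstacle is the nonlinear term $\mathcal{B}^{\sigma,n}_\cdot(\varphi) \propto \int_0^\cdot \sum_j \overline{\xi}^\sigma_j(s) \overline{\xi}^\sigma_{j+1}(s)\, \partial_x \varphi(\tfrac{j}{n} + v^\sigma_n s)\, ds$, since it is a genuinely quadratic functional of the fields and a naive second-moment estimate loses too much. The strategy here is to invoke the second-order Boltzmann-Gibbs principle \cref{prop:2bg} to replace $p_j \overline{r}_{j+1}$ (equivalently, via \eqref{eq:W}, the product $\overline{\xi}^\sigma_j \overline{\xi}^\sigma_{j+1}$ up to a constant) by the product of local averages $\overrightarrow{p}^\ell_j \overrightarrow{r}^\ell_j$ at a cost that vanishes as $n \to \infty$ after choosing, e.g., $\ell = \sqrt{n}$. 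The reduced quadratic functional of local averages has uniformly bounded variance by \cref{lem:static_estimate} and is by construction spatially smooth at scale $\ell/n$, so a direct Cauchy-Schwarz computation gives $\mathbb{E}_n[|\mathcal{B}^{\sigma,n}_t(\varphi) - \mathcal{B}^{\sigma,n}_s(\varphi)|^2] \le C(t-s)^2 \|\partial_x \varphi\|_{L^2(\mathbb{R})}^2$, yielding tightness.

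Finally, tightness of $\widetilde{\mathcal{X}}^{\sigma,n}_\cdot(\varphi)$ is a consequence of the martingale decomposition \eqref{eq:martingale_decomposition}: the initial field $\widetilde{\mathcal{X}}^{\sigma,n}_0(\varphi)$ is tight by a standard central-limit-type second moment calculation under the product measure $\nu_n$, each of the four building blocks on the right-hand side is tight by the arguments above, and the reminder $\mathcal{R}^{\sigma,n}_\cdot$ vanishes in probability by \cref{lem:martingale_decomposition_main}. The whole argument thus hinges on the quadratic term, whose control is the only non-routine step and for which \cref{prop:2bg} is indispensable.
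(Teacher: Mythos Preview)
Your overall architecture is the same as the paper's (Mitoma reduces to real-valued processes; the martingale, the linear drift integrals, and the quadratic term are handled separately; tightness of $\widetilde{\mathcal X}^{\sigma,n}$ then follows from the decomposition). The martingale and linear parts are routine and your sketch is fine, though the paper opts for a fourth-moment estimate on $\mathcal M^{\sigma,n}$ and the Kolmogorov--Chentsov criterion rather than Aldous; either works.

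There is, however, a genuine gap in your treatment of $\mathcal B^{\sigma,n}$. With the fixed choice $\ell=\sqrt{n}$, the claim that the reduced functional of local averages has ``uniformly bounded variance'' is false: the integrand
\[
\sum_{j\in\mathbb Z}\big((\overrightarrow{\xi^\sigma})^\ell_j\big)^2\,\partial_x\varphi(\tfrac jn+v^\sigma_n s)
\]
has correlations of range $\ell$ and single-site fourth moment of order $\ell^{-2}$, so its second moment is of order $n/\ell=\sqrt{n}$, not $O(1)$. Consequently your asserted bound $\mathbb E_n[|\mathcal B^{\sigma,n}_t-\mathcal B^{\sigma,n}_s|^2]\le C(t-s)^2$ with $C$ independent of $n$ does not hold; you get $C(t-s)^2\sqrt{n}$, which is useless for tightness. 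The fix, which is exactly what the paper does, is to let the block length depend on the time increment: choosing $\ell\asymp |t-s|^{1/2}n$ balances the Boltzmann--Gibbs error $T(\ell/n+1/\ell)$ against the crude $L^2$ bound $(t-s)^2 n/\ell$ on the local-average piece, yielding $\mathbb E_n[|\mathcal B^{\sigma,n}_t-\mathcal B^{\sigma,n}_s|^2]\lesssim |t-s|^{3/2}$ (for $|t-s|\ge n^{-2}$; the short-time regime is handled by a direct estimate). Kolmogorov--Chentsov then gives tightness and, incidentally, continuity of the limit points.
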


Here, note that the space of Schwartz distributions $\mathcal{S}^\prime(\mathbb{R})$ is metrizable, which turns out to be separable and complete with respect to the strong topology.
(See \cite[Section 2.3]{gonccalves2014nonlinear} for a precise description of the topology.) 
To prove the tightness of a sequence of processes, the following Mitoma's criterion \cite[Theorem 4.1]{mitoma1983tightness} is helpful.

\begin{proposition}[Mitoma's criterion]
\label{Mitoma}
A sequence of $\mathcal{S}^\prime (\mathbb{R} ) $-valued processes $\{ \mathcal{Y}^n_t : t \in [0, T ] \}_{ n \in \mathbb{N} } $ with trajectories in $D ([0, T ] , \mathcal{S}^\prime (\mathbb{R} ) ) $ is tight with respect to the uniform topology if, and only if, the sequence $\{ \mathcal{Y}^n_t (\varphi ) : t \in [0, T ] \}_{ n \in \mathbb{N}}$ of real-valued processes is tight with respect to the Skorohod topology on $D([0,T],\mathbb{R}) $ for any $\varphi \in \mathcal{S}(\mathbb{R}) $.   
\end{proposition}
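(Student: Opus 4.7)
The plan is to exploit the nuclear structure of the Schwartz space $\mathcal{S}(\mathbb{R})$. I would begin by introducing an explicit countable Hilbertian scale that generates the topologies of $\mathcal{S}(\mathbb{R})$ and $\mathcal{S}'(\mathbb{R})$: let $\{h_m\}_{m\ge 0}$ be the Hermite basis of $L^2(\mathbb{R})$, and for each $k\in\mathbb{Z}$ define the norm $\|\varphi\|_k^2=\sum_m (1+m)^{2k}\langle \varphi,h_m\rangle^2$. Denote by $\mathcal{S}_k$ the completion; then $\mathcal{S}(\mathbb{R})=\bigcap_k \mathcal{S}_k$ as topological vector spaces, $\mathcal{S}'(\mathbb{R})=\bigcup_k \mathcal{S}_{-k}$, and the inclusion $\mathcal{S}_{-k}\hookrightarrow \mathcal{S}_{-k'}$ is Hilbert--Schmidt as soon as $k'>k+1/2$. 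This is the one nontrivial input: compactness of these embeddings is what will convert pointwise-in-$\varphi$ information into honest tightness in $\mathcal{S}'(\mathbb{R})$.

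The ``only if'' direction is immediate: for each fixed $\varphi\in\mathcal{S}(\mathbb{R})$, the evaluation $T_\varphi:\mathcal{Y}\mapsto \mathcal{Y}(\varphi)$ is continuous on $\mathcal{S}'(\mathbb{R})$, so it induces a continuous map $D([0,T],\mathcal{S}'(\mathbb{R}))\to D([0,T],\mathbb{R})$, and tightness is preserved by continuous maps. The substance lies in the ``if'' direction, which I would carry out by verifying the two conditions of Prohorov's theorem (Proposition \ref{prop:prohorov}) for $E=\mathcal{S}'(\mathbb{R})$. For the marginal compactness condition at time $t$, use tightness of each real-valued projection to pick, for a given $\epsilon>0$, radii $R_m$ with $\sup_n\mathbb{P}(|\mathcal{Y}^n_t(h_m)|>R_m)\le \epsilon\,2^{-m-1}$. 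A countable-intersection argument then gives $\mathbb{P}(\mathcal{Y}^n_t\in K)\ge 1-\epsilon$ for $K:=\{\mathcal{Y}:|\mathcal{Y}(h_m)|\le R_m \text{ for all }m\}$. Since tightness of real-valued sequences imposes no restriction on the growth of $R_m$, I can enlarge the $R_m$ so that $\sum_m R_m^2 (1+m)^{-2k}<\infty$ for some $k$; then $K$ is a bounded set of $\mathcal{S}_{-k}$, hence relatively compact in $\mathcal{S}_{-k'}$ for $k'>k+1/2$, and in particular compact in $\mathcal{S}'(\mathbb{R})$.

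For the modulus-of-continuity condition, the plan is to promote the real-valued bounds $\lim_{\gamma\to 0}\limsup_n \mathbb{P}(w'_{\mathcal{Y}^n(h_m)}(\gamma)>\delta)=0$, valid for each Hermite function, into a single bound on $w'_{\mathcal{Y}^n}(\gamma)$ measured in the Hilbert norm of some $\mathcal{S}_{-k'}$. Given $\epsilon,\delta>0$, I would choose a sequence $\delta_m\downarrow 0$ satisfying $\sum_m \delta_m^2(1+m)^{-2k'}<\delta^2$ for some large $k'$, and then a $\gamma=\gamma(\epsilon)>0$ small enough that for every $m$ in a prescribed finite range, $\limsup_n \mathbb{P}(w'_{\mathcal{Y}^n(h_m)}(\gamma)>\delta_m)\le \epsilon\,2^{-m-1}$; this requires also handling the tail $m$ large via a uniform $\mathcal{S}_{-k}$-boundedness coming from the first step. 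On the intersection of the good events, the increments $\mathcal{Y}^n_t-\mathcal{Y}^n_s$, evaluated against $h_m$ over the appropriate partition, are each controlled by $\delta_m$, so their $\mathcal{S}_{-k'}$-norm is bounded by $\delta$.

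The main obstacle is precisely this last step: combining the countable family of one-dimensional modulus estimates into a single Hilbertian modulus, uniformly in $n$. The delicate point is that tightness of each real-valued projection only gives such estimates separately for each $m$, whereas $w'$ is not additive across coordinates, and one must carefully truncate the Hermite expansion, controlling the tail via the marginal-tightness step, while keeping the probability of the combined good event bounded below by $1-\epsilon$ uniformly in $n$. Once this quantitative coupling between the Hilbert--Schmidt constants of the embedding $\mathcal{S}_{-k}\hookrightarrow \mathcal{S}_{-k'}$ and the weights $(1+m)^{-2k'}$ is made explicit, the two Prohorov conditions are verified and the conclusion follows. This is precisely the argument carried out by Mitoma in \cite{mitoma1983tightness}, to which I would refer for the detailed bookkeeping.
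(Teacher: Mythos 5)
The paper does not prove this proposition: it is quoted directly from \cite[Theorem 4.1]{mitoma1983tightness}, so there is no in-paper argument to compare against, and your outline — the countably Hilbertian scale, Hilbert--Schmidt embeddings, and verification of the two Prohorov conditions — is the standard route, ultimately deferring the bookkeeping to Mitoma's original article exactly as the paper does. The one imprecision worth flagging is in your marginal-compactness step: polynomial control of the radii $R_m$ does not follow from tightness of the Hermite projections $\{\mathcal{Y}^n_t(h_m)\}_n$ alone (the minimal $R_m$ could grow super-polynomially), and one needs the hypothesis for \emph{all} $\varphi\in\mathcal S(\mathbb R)$ together with a uniform-boundedness/equicontinuity argument on $\mathcal S$ to land in a single $\mathcal S_{-k}$ — but since you defer to Mitoma for the details, this is a minor gap in the sketch rather than a flaw in the strategy.
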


In addition, we will use the following continuity criterion.
(See for example \cite[Theorem 1.2.1]{revuz2013continuous}.)

\begin{proposition}[The Kolmogorov-Chentsov criterion]
\label{prop:kolmogorov_chentsov}
Let $\{ X_t : t\in [0,T]\}$ be a Banach-valued process for which there exists constants $\kappa,\gamma_1 \gamma_2>0$ such that 
\begin{equation*}
\mathbb{E} \big[ \big\|X_t-X_s\big\|^{\gamma_1} \big]
\le \kappa | t-s|^{1+\gamma_2},
\end{equation*}
for any $s,t\in[0,T]$. 
Here $\| \cdot\|$ denotes the norm of the Banach space on which the process takes values. 
Then, there is a modification $\tilde{X}$ of $X$ such that 
\begin{equation*}
\mathbb{E} \bigg[ \bigg( \sup_{s\neq t} \frac{\|\tilde{X}_t -\tilde{X}_s\|}{|t-s|^{\alpha}} \bigg)^{\gamma_1} \bigg] < +\infty
\end{equation*}
for any $\alpha \in [0, \gamma_2/\gamma_1)$. 
In particular, the paths of $\tilde{X}$ are almost-surely $\alpha$-H\"{o}lder continuous. 
\end{proposition}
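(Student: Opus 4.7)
Without loss of generality I would take $T=1$. The plan is to first establish pathwise Hölder continuity on the set of dyadic rationals $D=\bigcup_{n\ge 0} D_n$, where $D_n=\{k 2^{-n}:0\le k\le 2^n\}$, and then define $\tilde X$ by extending $X|_D$ to all of $[0,1]$ by uniform continuity. The extension is automatically a modification because the hypothesis forces $X_s\to X_t$ in $L^{\gamma_1}$ as $s\to t$, so the pathwise continuous extension agrees with $X_t$ almost surely for every $t$.

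The key estimate is a dyadic Markov bound on the block maximum
\begin{equation*}
K_n := \max_{0\le k<2^n} \|X_{(k+1)/2^n} - X_{k/2^n}\|.
\end{equation*}
Summing the hypothesis over the $2^n$ consecutive pairs at level $n$ gives $\mathbb E[K_n^{\gamma_1}] \le 2^n\cdot\kappa\cdot 2^{-n(1+\gamma_2)} = \kappa\, 2^{-n\gamma_2}$. Fix $\alpha\in[0,\gamma_2/\gamma_1)$; Markov's inequality yields $\mathbb P(K_n \ge 2^{-\alpha n}) \le \kappa\, 2^{-n(\gamma_2-\alpha\gamma_1)}$, whose sum in $n$ converges, so by Borel--Cantelli there exists an a.s.\ finite $N_0(\omega)$ with $K_n(\omega)<2^{-\alpha n}$ for all $n\ge N_0$. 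A standard chaining argument -- writing $s<t$ in $D$ with $t-s<2^{-N_0}$ through intermediate dyadic midpoints and telescoping -- then yields $\|X_t-X_s\|\le 2\sum_{n>N} K_n$ whenever $2^{-(N+1)}\le t-s<2^{-N}$, and hence
\begin{equation*}
\sup_{s,t\in D,\, s\ne t} \frac{\|X_t-X_s\|}{|t-s|^\alpha} \le C_\alpha \sum_{n\ge 0} 2^{\alpha n} K_n
\end{equation*}
almost surely. Defining $\tilde X_t := \lim_{D\ni s\to t} X_s$ then extends this Hölder bound to all of $[0,1]$ and furnishes the required modification.

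To take the $\gamma_1$-th moment, when $\gamma_1\ge 1$ Minkowski's inequality gives
\begin{equation*}
\mathbb E\bigg[\bigg(\sum_{n\ge 0} 2^{\alpha n} K_n\bigg)^{\gamma_1}\bigg]^{1/\gamma_1}
\le \sum_{n\ge 0} 2^{\alpha n}\mathbb E[K_n^{\gamma_1}]^{1/\gamma_1}
\le \kappa^{1/\gamma_1}\sum_{n\ge 0} 2^{n(\alpha-\gamma_2/\gamma_1)},
\end{equation*}
which is finite exactly because $\alpha<\gamma_2/\gamma_1$. For $\gamma_1<1$ one instead uses subadditivity of $x\mapsto x^{\gamma_1}$ to bound the same expectation by $\sum_{n\ge 0} 2^{\alpha\gamma_1 n}\mathbb E[K_n^{\gamma_1}]\le \kappa\sum_{n\ge 0} 2^{n(\alpha\gamma_1-\gamma_2)}$, where the condition $\alpha<\gamma_2/\gamma_1$ is again precisely what makes the geometric series converge. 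The only delicate point is the chaining step: one must verify it uniformly in $\omega$ on the full-measure set where the Borel--Cantelli conclusion holds, so that a single modification works for every admissible exponent simultaneously (which is achieved by covering $[0,\gamma_2/\gamma_1)$ by a countable sequence of such $\alpha$).
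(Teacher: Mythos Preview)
The paper does not give its own proof of this proposition; it is quoted as a classical result with a reference to \cite[Theorem 1.2.1]{revuz2013continuous}. Your argument is the standard dyadic chaining proof of the Kolmogorov--Chentsov criterion (control of the level-$n$ maximal increments $K_n$ via the moment hypothesis, Borel--Cantelli, chaining on dyadics, and extension by uniform continuity), and it is essentially correct.
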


In what follows, we prove Lemma \ref{lem:tightness}. 
With the help of Mitoma's criterion, it suffices to show the tightness of sequences
$\{ \widetilde{\mathcal{X}}^{\sigma,n}_t (\varphi):t\in [0,T]\}_{n\in\mathbb N}$, $\{ \mathcal{S}^{\sigma,n}_t (\varphi) : t \in [0, T ] \}_{ n \in \mathbb{N} } $, $\{ \mathcal{B}^{\sigma,n}_t (\varphi) : t \in [0, T ] \}_{ n \in \mathbb{N} } $ and $\{ \mathcal{M}^{\sigma,n}_t (\varphi) : t \in [0, T ] \}_{ n \in \mathbb{N}}$ in $D ([0,T],\mathbb{R})$ for any given test function $\varphi \in \mathcal{S} (\mathbb{R})$. 
In order to prove the tightness of a real-valued sequence $\{X^{\sigma,n}_t:t\ge0 \}_n$ in $D([0,T],\mathbb{R})$, according to Prohorov's theorem, recall that it suffices to show two conditions, namely the condition (1) in Proposition \ref{prop:prohorov} and the condition (2'): 
\begin{equation}
\label{eq:tighness_condition_prohorov}
\lim_{\delta\to0} \limsup_{n\to \infty}
\mathbb{P}_n \Bigg( \sup_{\substack{|t-s|\le \delta\\ 0\le s,t \le T}} | X^n_t-X^n_s| > \varepsilon \Bigg)
= 0,
\end{equation}
for any $\varepsilon>0$. 
The condition (1) of Proposition \ref{prop:prohorov} on fixed times easily follows for our sequences. 
Hence, in what follows, our task is to verify the condition \eqref{eq:tighness_condition_prohorov} for the sequences $\{ \mathcal{X}^{\sigma,n}_t (\varphi) : t \in [0, T ] \}_{ n \in \mathbb{N} } $, $\{ \mathcal{S}^{\sigma,n}_t (\varphi) : t \in [0, T ] \}_{ n \in \mathbb{N} } $, $\{ \mathcal{B}^{\sigma,n}_t (\varphi) : t \in [0, T ] \}_{ n \in \mathbb{N} } $ and $\{ \mathcal{M}^{\sigma,n}_t (\varphi) : t \in [0, T ] \}_{ n \in \mathbb{N}}$. 
For the initial field $\{ \mathcal{X}^{\sigma,n}_0\}_n$, it is enough to observe that by characteristic functions we can show that it converges to a Gaussian field, and in particular it is tight. 
Thus, in what follows, we focus on the tightness of the martingale, symmetric and antisymmetric parts, from which the tightness of the fields $\{ \mathcal{X}^{\sigma,n}_\cdot\}_n$ is deduced.

\subsection{Martingale Part}
To show the tightness of the martingale part $\mathcal{M}^{\sigma,n}_\cdot$, we show the following estimate on the fourth moment. 

\begin{lemma}
For each smooth compactly supported function $\varphi$, and for each $s,t\in[0,T]$ such that $s<t$, we have that  
\begin{equation*}
\mathbb E_n \Big[(\mathcal{M}^{\sigma,n}_t(\varphi)-\mathcal{M}^{\sigma,n}_s(\varphi))^4 \Big] 
\lesssim  |t-s|^2 + n^{-3}|t-s|  .  
\end{equation*}
\end{lemma}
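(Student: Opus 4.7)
The plan is to exploit the pure-jump structure of $\mathcal{M}^{\sigma,n}$: since the Hamiltonian part of $L_n$ generates a continuous flow, every discontinuity of the martingale comes from a momentum exchange produced by $S$. The Burkholder--Davis--Gundy inequality for c\`adl\`ag martingales at exponent $p=2$ yields
\[
\mathbb{E}_n\!\left[(\mathcal{M}^{\sigma,n}_t - \mathcal{M}^{\sigma,n}_s)^4\right] \;\lesssim\; \mathbb{E}_n\!\left[[\mathcal{M}^{\sigma,n}]^2_{t,s}\right],
\]
where $[\mathcal{M}^{\sigma,n}]_{t,s} = \sum_{s<u\le t}(\Delta \mathcal{M}^{\sigma,n}_u)^2$ is the optional quadratic variation over $[s,t]$. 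Decomposing $[\mathcal{M}^{\sigma,n}] = \langle \mathcal{M}^{\sigma,n}\rangle + \mathcal{N}$ with $\mathcal{N}$ the purely discontinuous martingale whose predictable quadratic variation is the compensator of $\sum(\Delta\mathcal{M}^{\sigma,n})^4$, we arrive at the standard bound
\[
\mathbb{E}_n\!\left[[\mathcal{M}^{\sigma,n}]^2_{t,s}\right] \;\lesssim\; \mathbb{E}_n\!\left[\langle \mathcal{M}^{\sigma,n}\rangle^2_{t,s}\right] + \mathbb{E}_n\!\left[\sum_{s<u\le t}(\Delta \mathcal{M}^{\sigma,n}_u)^4\right],
\]
and it suffices to control the two terms separately.

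For the predictable contribution, \eqref{eq:quadratic_variation}--\eqref{eq:carre_du_champs_computation} give $\langle \mathcal{M}^{\sigma,n}\rangle_{t,s} = \int_s^t \Gamma^{\sigma,n}_u \, du$. A Cauchy--Schwarz in time reduces matters to controlling $\mathbb{E}_n[(\Gamma^{\sigma,n}_u)^2]$, which one expands as a double Riemann sum over bonds. The factor $n^{-2}$ in $(\Gamma^{\sigma,n}_u)^2$ is exactly compensated by the combinatorial factor $n^2$ coming from $\sum_{j,k}(\nabla^n\varphi)_j^2(\nabla^n\varphi)_k^2$, while $\mathbb{E}_n[(\Xi^{p,\sigma}_{j+1}-\Xi^{p,\sigma}_j)^4]$ is bounded uniformly in $j,n$ thanks to the product structure of $\nu_n$ and \cref{lem:static_estimate}. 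This gives $\mathbb{E}_n[\langle\mathcal{M}^{\sigma,n}\rangle^2_{t,s}] \lesssim |t-s|^2$.

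For the sum of fourth powers of jumps, each exchange at bond $(j,j+1)$ produces
\[
\Delta\mathcal{M}^{\sigma,n}_u = -\frac{1}{n^{3/2}}(\Xi^{p,\sigma}_{j+1}-\Xi^{p,\sigma}_j)\,\nabla^n\varphi\bigl(\tfrac{j}{n}+v^\sigma_n u\bigr),
\]
and these events occur at each bond at rate $n^2\gamma/2$ because of the time acceleration $L_n = n^2 L$. Taking the predictable compensator gives
\[
\mathbb{E}_n\!\left[\sum_{s<u\le t}(\Delta\mathcal{M}^{\sigma,n}_u)^4\right] = \frac{n^2\gamma}{2\,n^{6}} \int_s^t \sum_{j\in\mathbb{Z}} \mathbb{E}_n\!\left[(\Xi^{p,\sigma}_{j+1}-\Xi^{p,\sigma}_j)^4\right] \bigl(\nabla^n\varphi(\tfrac{j}{n}+v^\sigma_n u)\bigr)^4 du,
\]
and the Riemann-sum estimate $\sum_j (\nabla^n\varphi)^4 \lesssim \|\partial_x\varphi\|_\infty^2\,n\,\|\partial_x\varphi\|_{L^2(\mathbb{R})}^2$, combined once more with \cref{lem:static_estimate}, produces the bound $|t-s|/n^3$. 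Combining the two pieces yields the claim.

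I do not anticipate a genuine obstacle. The only mildly delicate point is the quadratic correction $\tfrac{\varepsilon_n\mathfrak{u}}{2}p_j^2$ inside $\Xi^{p,\sigma}_j$: it contributes terms carrying additional factors of $\varepsilon_n^2$ in the fourth-power estimates, but uniform boundedness of all moments of $p_j$ under $\nu_n$ (again via \cref{lem:static_estimate}) makes these contributions strictly subleading, leaving the two scales $|t-s|^2$ and $n^{-3}|t-s|$ unaffected.
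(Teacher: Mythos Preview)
Your argument is correct and follows the standard route that the paper itself invokes (the paper omits the proof, citing \cite[Lemma~5.5]{gonccalves2023derivation}): Burkholder--Davis--Gundy at exponent $4$, then the decomposition $[\mathcal{M}^{\sigma,n}]=\langle\mathcal{M}^{\sigma,n}\rangle+\mathcal{N}$ together with $\mathbb{E}_n[\mathcal{N}_{s,t}^2]=\mathbb{E}_n\big[\sum_{s<u\le t}(\Delta\mathcal{M}^{\sigma,n}_u)^4\big]$. Your jump-size identification $\Delta\mathcal{M}^{\sigma,n}_u=-n^{-3/2}(\Xi^{p,\sigma}_{j+1}-\Xi^{p,\sigma}_j)\nabla^n\varphi$ and the resulting $n^{-3}|t-s|$ bound are exactly what that reference does in its setting, and the treatment of the $\varepsilon_n$-correction in $\Xi^{p,\sigma}_j$ via \cref{lem:static_estimate} is the right way to absorb it.
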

The proof is analogous to \cite[Lemma 5.5]{gonccalves2023derivation}, so that we omit the proof here. 
With this fourth moment estimate at hand, we can show the tightness of the martingale part. 
Indeed, note that 
\begin{equation*}
\begin{aligned}
\mathbb P_n 
\bigg( \sup_{\substack{|s-t|\le \delta \\0\le s,t\le T}} 
| \mathcal{M}^{\sigma,n}_t(\varphi)-\mathcal{M}^{\sigma,n}_s(\varphi)| >\varepsilon \bigg)
&\le \varepsilon^{-4} \mathbb E_n \bigg[ \sup_{\substack{|s-t|\le \delta \\0\le s,t\le T}} 
| \mathcal{M}^{\sigma,n}_t(\varphi)-\mathcal{M}^{\sigma,n}_s(\varphi)| \bigg] \\
&\lesssim \varepsilon^{-4} \delta^{-1} \mathbb E_n\big[(\mathcal{M}^{\sigma,n}_\delta(\varphi)^4)]
\end{aligned}
\end{equation*}
where we used Doob's inequality and stationarity of the process. 
Since the utmost right-hand side of the last display vanishes as $n\to\infty$ and then $\delta\to0$, the condition \eqref{eq:tighness_condition_prohorov} is deduced and thus the sequence $\{ \mathcal{M}^{\sigma,n}_\cdot(\varphi) \}_n$ is tight for each $\sigma=\pm$, provided the test function $\varphi$ is smooth, compactly supported. 
To extend the class of test functions, it suffices to approximate an arbitrary function in $\mathcal{S}(\mathbb R)$ in the martingale decomposition \eqref{eq:martingale_decomposition}.

\subsection{Symmetric Part}
We show the tightness of the symmetric part. 
Let 
\begin{equation*}
\mathcal{S}^{\sigma,n}_t(\varphi)
= \frac{\gamma}{4} \int_0^t 
\mathcal X^{\sigma,n}_s (\partial_x^2\varphi_{v^\sigma_ns} ) ds
\end{equation*}
for each $\sigma\in\{+,-\}$. 
Note that by a direct $L^2$-estimate, we have that 
\begin{equation*}
\begin{aligned}
\mathbb E_n \Big[|\mathcal{S}^{\sigma,n}_{t_1}(\varphi) - \mathcal{S}^{\sigma,n}_{t_2}(\varphi)|^2 \Big] 
&\lesssim |t_1-t_2|\int_{t_1}^{t_2} \frac{1}{n} \sum_{j\in\mathbb Z}
E_{\nu_n}[(\overline \xi^\sigma_j)^2] 
\big( \partial_x^2 \varphi(\tfrac{j}{n}+v^\sigma_ns) \big)^2 ds \\
&\lesssim |t_1-t_2|^2 \| \partial_x^2 \varphi\|^2_{L^2(\mathbb R)}. 
\end{aligned}
\end{equation*}
By continuity of $\mathcal{S}^{\sigma,n}_\cdot$ and the Kolmogorov-Chentsov criterion, the condition \eqref{eq:tighness_condition_prohorov} is deduced. 
Hence the tightness of the symmetric part follows immediately.

\subsection{Antisymmetric Part}
Now, we show the tightness of the antisymmetric part.
Define 
\begin{equation*}
\mathcal{B}^{\sigma,n}_t(\varphi)
= -\sigma\alpha \frac{c_3}{8c_2^2} 
\int_0^t\sum_{j\in\mathbb Z}
\overline{\xi}^{\sigma}_j(s) \overline{\xi}^\sigma_{j+1}(s)
\partial_x \varphi(\tfrac{j}{n}+ v^\sigma_ns) ds
\end{equation*}
for each $\sigma\in\{+,-\}$. 
Tightness of the antisymmetric part follows from the second-order Boltzmann-Gibbs principle.  
Indeed, by Proposition \ref{prop:2bg}, 
\begin{equation*}
\begin{aligned}
&\mathbb E_n \Bigg[ \bigg| \mathcal{B}^{\sigma,n}_{t_2}(\varphi) 
-\mathcal{B}^{\sigma,n}_{t_1}(\varphi)
+ \sigma\alpha\frac{c_3}{8c_2^2} \int_{t_1}^{t_2} \sum_{j\in\mathbb Z}
\big( (\overrightarrow\xi^\sigma)^\ell_j \big)^2 
\partial_x \varphi(\tfrac{j}{n}+ v^\sigma_ns)  ds
\bigg|^2 \Bigg] \\
&\quad \lesssim \bigg( \frac{(t_2-t_1)\ell}{n} + 
{\frac{t_2-t_1}{\ell}} \bigg) \| \partial_x \varphi\|^2_{L^2(\mathbb R)} .
\end{aligned}
\end{equation*}
On the other hand, by a crude $L^2$-estimate, we have that 
\begin{equation*}
\mathbb E_n \Bigg[ \bigg| \int_{t_1}^{t_2} \sum_{j\in\mathbb Z} 
\big( (\overrightarrow \xi^\sigma)^\ell_j \big)^2 
\partial_x \varphi(\tfrac{j}{n}+ v^\sigma_ns)  ds \bigg|^2 \Bigg]
\lesssim \frac{(t_2-t_1)^2n}{\ell} \| \partial_x \varphi\|^2_{L^2(\mathbb R)} 
\end{equation*}
since $E_{\nu_n}[((\overrightarrow\xi^\sigma)^\ell_j)^4]\lesssim \ell^{-2}$. 
When $1/n^2\le t_2-t_1\le 1$, we take $\ell$ with order $(t_2-t_1)^{1/2}n$ to obtain 
\begin{equation*}
\mathbb E_n \big[ | \mathcal{B}^{\sigma,n}_{t_2}(\varphi)
- \mathcal{B}^{\sigma,n}_{t_1}(\varphi)|^2 \big]
\lesssim (t_2-t_1)^{3/2} \| \partial_x \varphi\|^2_{L^2(\mathbb R)}. 
\end{equation*}
On the other hand, when $t_2-t_1 \le 1/n^2$, by a direct estimate we have that 
\begin{equation*}
\mathbb E_n \big[ | \mathcal{B}^{\sigma,n}_{t_2}(\varphi)
- \mathcal{B}^{\sigma,n}_{t_1}(\varphi)|^2 \big]
\lesssim (t_2-t_1)^{2}n \| \partial_x \varphi\|^2_{L^2(\mathbb R)}
\lesssim (t_2-t_1)^{3/2} \| \partial_x \varphi\|^2_{L^2(\mathbb R)}. 
\end{equation*}
Hence, the antisymmetric part is tight by the Kolmogorov-Chentsov criterion and continuity of the process $\mathcal{B}^{\sigma,n}_\cdot$.

\section{Identification of the limit points}
\label{sec:identification_limit_points}
Recall the martingale decomposition \eqref{eq:martingale_decomposition}. 
Then, as a consequence of Lemma \ref{lem:tightness}, the sequences
$\{ \widetilde{\mathcal{X}}^{\sigma,n}_\cdot\}_n$, $\{\mathcal{M}^{\sigma,n}_\cdot\}_n$,
$\{\mathcal{S}^{\sigma,n}_\cdot\}_n$ and $\{\mathcal{B}^{\sigma,n}_\cdot\}_n$ are tight with respect to the
Skorohod topology in $D([0,T],\mathcal{S}'(\mathbb R))$ for each $\sigma=\pm$. 
Let $\mathscr Q^{\sigma,n}$ be the distribution of 
\begin{equation*}
\{ (\widetilde{\mathcal{X}}^{\sigma,n}_t, 
\mathcal{M}^{\sigma,n}_t, 
\mathcal{S}^{\sigma,n}_t, 
\mathcal{B}^{\sigma,n}_t) : t\in [0,T] \} . 
\end{equation*}
Then, there exists a subsequence $n$, which is denoted by the same letter with abuse of notation, such that $\{ \mathscr Q^{\sigma,n}\}_n$ converges to a limit point $\mathscr Q^\sigma$. 
Let $\mathcal{X}^\sigma$, $\mathcal{M}^\sigma$, $\mathcal{S}^\sigma$ and $\mathcal{B}^{\sigma}$ be the respective limits in distribution of each component. 
Since the tightness is shown in the uniform norm, these limiting processes have continuous trajectories, almost surely.
Hereafter we characterize these limit points as stationary energy solution of the stochastic Burgers equation \eqref{eq:drift-sbe}.  
Since the solution of the stochastic Burgers equation is unique-in-law, then convergence along the full sequence follows.

\subsection{Martingale Part}
Recall that the quadratic variation of the martingale $\mathcal{M}^{\sigma,n}$ is given by \eqref{eq:quadratic_variation}.  
Then, by Markov's inequality and Schwarz's inequality, for each $\sigma=\pm$ we have that 
\begin{equation*}
\lim_{n\to\infty}
\mathbb P_n
\bigg( \sup_{0\le t\le T} \Big| 
\langle \mathcal{M}^{\sigma,n}(\varphi)\rangle_t 
- \beta^{-1} \gamma t \| \partial_x \varphi \|^2_{L^2(\mathbb R)} 
\Big| > \varepsilon \bigg) 
= 0
\end{equation*}
for any $\varepsilon>0$.  
In particular, for each $\sigma=\pm$, the process $\{ \langle \mathcal{M}^{\sigma,n}(\varphi)\rangle_t : t\in [0,T]\}$ converges in distribution on $D([0,T],\mathbb R)$ to a deterministic path $\{ \beta^{-1}\gamma t\| \partial_x \varphi\|^2_{L^2(\mathbb R)}: t\in [0,T]\}$ as $n\to \infty$. 
Then, we can show that any limit point $\mathcal{M}^\sigma$ is a continuous martingale with quadratic variation $\beta^{-1}\gamma t\|\partial_x \varphi\|^2_{L^2(\mathbb R)}$. 
Indeed, note that the limit point $\mathcal{M}^\sigma$ is a martingale since it is obtained as a limit of martingales with respect to the uniform topology, and that we have that bound 
\begin{equation*}
\mathbb E_n \Big[ 
\sup_{0\le s \le t} \big| \mathcal{M}^{\sigma,n}_s(\varphi)
- \mathcal{M}^{\sigma,n}_{s-}(\varphi)\big| \Big]
\le 2 \mathbb E_n \Big[ 
\sup_{0\le s \le t} \big| \mathcal{M}^{\sigma,n}_s(\varphi)
\big|^2 \Big]^{1/2} 
\lesssim 2 \mathbb E_n \big[\langle \mathcal{M}^{\sigma,n}(\varphi)\rangle_t  \big]^{1/2}
\end{equation*}
by the triangle inequality and Doob's inequality. 
Here, we notice that the utmost right-hand side of the last display is bounded by a constant which is independent of $n$.
Therefore, by \cite[Corollary VI.6.30]{jacod2013limit}, the convergence $(\mathcal{M}^{\sigma,n}(\varphi), \langle\mathcal{M}^{\sigma,n}(\varphi)\rangle) \to (\mathcal{M}^{\sigma}(\varphi), \langle\mathcal{M}^{\sigma}(\varphi)\rangle)$ in distribution follows. 
Combining with the convergence of the quadratic variation, we conclude that $\langle \mathcal{M}^\sigma (\varphi)\rangle_t= \beta^{-1} \gamma t\| \partial_x \varphi\|^2_{L^2(\mathbb R)}$ for each $t \in [0,T]$ and $\sigma\in \{+,-\}$.

\subsection{Symmetric Part}
For the symmetric part, the following identity is straightforward:     
\begin{equation*}
\mathcal{S}^\sigma_t(\varphi)
= \frac{\gamma}{4} \int_0^t \mathcal{X}^\sigma_s(\partial_x^2 \varphi) ds. 
\end{equation*}

\subsection{Antisymmetric Part}
Finally, we deal with the antisymmetric part. 
Since the sequence $\{ \widetilde{\mathcal{X}}^{\sigma,n}\}_n$ is tight,
we have that $\lim_{n\to\infty}\widetilde{\mathcal{X}}^{\sigma,n}_\cdot
=\mathcal{X}^\sigma$ in $D([0,T],\mathcal{S}'(\mathbb R))$ along some subsequence $n$. 
Moreover, the limit $\mathcal{X}^\sigma$ clearly satisfies the condition \textbf{(S)}. 
Therefore, we can show the following convergence. 
\begin{equation*}
\mathcal{A}^{\sigma,\varepsilon}_{s,t}(\varphi)
= \lim_{n\to\infty} 
\frac{1}{n}\int_0^t \sum_{j\in\mathbb Z} 
\mathcal{X}^{\sigma,n}_r \big( \iota_\varepsilon
({\textstyle \frac{j-v^\sigma_nr}{n}};\cdot) \big)^2 
\partial_x \varphi(\tfrac{j}{n}+ v^\sigma_nr) dr 
\end{equation*}
where $\mathcal{A}^{\sigma,\varepsilon}_{s,t}(\varphi)$ coincides with the process we defined in \eqref{eq:def_quadratic_function_approximation} with $u = \mathcal{X}^\sigma$, by approximating the function $\iota_\varepsilon$ by functions in $\mathcal{S}(\mathbb R)$, see \cite[Section 5.3]{gonccalves2014nonlinear} for this argument).  
On the other hand, by the second-order Boltzmann-Gibbs principle~(\cref{prop:2bg}), 
\begin{equation*}
\begin{aligned}
&\mathbb E_n \Bigg[ \bigg| \mathcal{B}^{\sigma,n}_t(\varphi) 
- \mathcal{B}^{\sigma,n}_s(\varphi) 
+ \sigma\alpha \frac{c_3}{8c_2^2} 
\int_s^t \sum_{j\in\mathbb Z} \big( \overrightarrow{(\xi^\sigma)}^{\lfloor \varepsilon n\rfloor }_j (r) \big)^2 
\partial_x \varphi(\tfrac{j}{n}+v^\sigma_nr)  dr 
\bigg|^2 \Bigg] \\
&\quad\lesssim 
\Big( \frac{(t-s)\varepsilon n}{n} 
+ {\frac{t-s}{\varepsilon n}} \Big) \| \partial_x \varphi \|^2_{L^2(\mathbb R)} .
\end{aligned}
\end{equation*}
Let $n\to \infty$ to obtain 
\begin{equation}
\label{eq:energy_condition_derivation}
\mathbb E \bigg[\Big| \mathcal{B}^{\sigma}_t(\varphi) 
- \mathcal{B}^{\sigma}_s(\varphi) 
+\frac{c_3\alpha}{8} \mathcal{A}^{\sigma,\varepsilon}_{s,t}(\varphi) \Big|^2 \bigg] 
\lesssim \varepsilon (t-s) \| \partial_x \varphi\|^2_{L^2(\mathbb R)} .
\end{equation}
Hence, the triangle inequality yields the condition \textbf{(EC)}. 
Consequently, by Proposition \ref{prop:nonlinear}, we get the existence of the limit 
\begin{equation*}
\mathcal{A}^\sigma_{t}(\varphi)
= \lim_{\varepsilon \to 0} \mathcal{A}^{\sigma, \varepsilon}_{0,t}(\varphi).  
\end{equation*}
Moreover, setting $s=0$ in the estimate \eqref{eq:energy_condition_derivation}, we find 
\begin{equation*}
\mathcal{B}^{\sigma}= - \sigma\alpha \frac{c_3}{8c_2^2}  \mathcal{A}^\sigma. 
\end{equation*}
Hence, up to now, we conclude that the condition (2) in \cref{def:energysol} holds.

Finally, we give some comments to complete the proof of Theorem \ref{thm:sbe_derivation_from_chain}. 
Note that all the above estimates hold \emph{mutatis mutandis} for the reversed process $\{ \mathcal{X}^{\sigma,n}_{T-t}: t\in [0,T] \}$ by repeating the argument for the dynamics generated by $L^*_n$, from which we conclude that the condition (3) of Definition \ref{def:energysol} is satisfied. 
Therefore, it follows that that the limiting process $u^\sigma$ is the stationary energy solution of the SBE \eqref{eq:drift-sbe}. 
Moreover, the independence of the two white-noises for $u^+$ and $u^-$ follows from the computation in \cref{sec:riemann_lebesgue_estimates}.  
Hence we complete the proof of \cref{thm:sbe_derivation_from_chain}.

\appendix

\section{Stationary Energy Solution of the
Stochastic Burgers Equation}
\label{sec-ssbe}

Let us recall the notion of the stationary energy solution of the stochastic Burgers equation which was introduced in~\cite{gonccalves2014nonlinear}. 
Let $\beta,D > 0$ and $\Lambda \in \mathbb{R}$ be fixed constants and consider the $(1+1)$-dimensional stochastic Burgers equation
\begin{equation}
\label{eq:SBE_general} 
\partial_t u = D \partial_x^2 u 
+ \Lambda \partial_x u^2 + \sqrt{D\beta^{-1}} \partial_x \dot{W}.
\end{equation}
We begin with the definition of stationarity. 

\begin{definition}
We say that an $\mathcal{S}^\prime (\mathbb{R})$-valued process $u = \{ u_t  : t \in [0,T] \} $
satisfies condition \textbf{(S)} if for all $t \in [0,T]$, the random variable $u_t$
has the same distribution as the space white-noise with variance $\beta^{-1}$. 
\end{definition}

For a process $u = \{ u_t: t \in [0,T]\}$ satisfying the condition \textbf{(S)}, we define 
\begin{equation}
\label{eq:def_quadratic_function_approximation}
\mathcal{A}^\varepsilon_{ s, t } (\varphi ) = \int_s^t \int_{\mathbb{R} } u_r (\iota_\varepsilon (x; \cdot) )^2 \partial_x \varphi (x ) dx dr .  
\end{equation}
for every $0 \le s < t \le T $, $\varphi \in \mathcal{S} (\mathbb{R} ) $ and $\varepsilon > 0 $. 
Here we defined the function $\iota_\varepsilon (x ; \cdot ) : \mathbb{R} \to \mathbb{R}  $ by $\iota_{ \varepsilon } (x ; y) =  \varepsilon^{ - 1 } \mathbf{1}_{ [ x , x  + \varepsilon ) } (y) $ for each $x \in \mathbb{R} $ and $\epsilon>0$. 
Although the function $\iota_\varepsilon(x,\cdot)$ does not belong to the Schwartz space, the quantity \eqref{eq:def_quadratic_function_approximation} is well-defined when $u$ satisfies the condition \textbf{(S)}.

\begin{definition}
Let $u = \{ u_t :t \in [0,T]\}$ be a process satisfying the condition \textbf{(S)}. 
We say that the process $u$ satisfies the energy estimate \textbf{(EC)} if there exists a constant $\kappa > 0$ such that for any $\varphi \in \mathcal{S} (\mathbb{R} )$, any $0 \le s < t \le T$ and any $0 < \delta < \varepsilon < 1 $,  
\begin{equation*}
\mathbb{E} \Big[ \big| \mathcal{A}^\varepsilon_{ s, t } (\varphi )
- \mathcal{A}^\delta_{ s, t } (\varphi ) \big|^2 \Big] 
\le \kappa \varepsilon (t-s) \| \partial_x \varphi \|^2_{ L^2(\mathbb{R})} .
\end{equation*}
Here $\mathbb{E}$ denotes the expectation with respect to the measure $\mathbb P$ of a probability space where the process $u$ lives. 
\end{definition}

Then the following result is proved in \cite{gonccalves2014nonlinear}. 

\begin{proposition}
\label{prop:nonlinear}
Assume $\{ u_t:t\in [0,T]\} $ satisfies the conditions \textbf{(S)} and \textbf{(EC)}. Then there exists an $\mathcal{S}^\prime (\mathbb{R} )$-valued process $\{ \mathcal{A}_t : t \in [0, T ] \} $ with continuous trajectories such that  
\begin{equation*}
\mathcal{A}_t (\varphi ) = \lim_{ \varepsilon \to 0 } \mathcal{A}^\varepsilon_{ 0, t } (\varphi) ,
\end{equation*}
in $L^2(\mathbb P)$ for every $t \in [0,T]$ and $\varphi \in \mathcal{S}(\mathbb{R})$.  
\end{proposition}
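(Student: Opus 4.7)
The plan is to extract the limit process $\mathcal{A}_t$ in three stages: first a pointwise-in-$(\varphi,t)$ definition by a Cauchy argument directly from \textbf{(EC)}, then a temporal-regularity estimate strong enough for Kolmogorov--Chentsov, and finally a promotion to a continuous $\mathcal{S}'(\mathbb{R})$-valued modification via linearity and Mitoma-type reasoning.

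First I would fix $t \in [0,T]$ and $\varphi \in \mathcal{S}(\mathbb{R})$ and observe that \textbf{(EC)} with $s=0$ gives $\mathbb{E}\big[\big|\mathcal{A}^\varepsilon_{0,t}(\varphi)-\mathcal{A}^\delta_{0,t}(\varphi)\big|^2\big] \le \kappa\varepsilon t\|\partial_x\varphi\|_{L^2}^2$ for $0<\delta<\varepsilon<1$, so the net $(\mathcal{A}^\varepsilon_{0,t}(\varphi))_{\varepsilon\downarrow 0}$ is Cauchy in $L^2(\mathbb{P})$, which defines $\mathcal{A}_t(\varphi)$ as the full limit (independent of the approximating sequence). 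Passing $\delta \to 0$ inside the same estimate yields the companion bound
\begin{equation*}
\mathbb{E}\big[\big|\mathcal{A}_t(\varphi)-\mathcal{A}^\varepsilon_{0,t}(\varphi)\big|^2\big]
\le \kappa\varepsilon t\|\partial_x\varphi\|_{L^2}^2,
\end{equation*}
and linearity of $\varphi \mapsto \mathcal{A}_t(\varphi)$ is inherited from the approximations.

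For continuity in $t$, the plan is a standard interpolation trick. On one hand, \textbf{(EC)} applied on $[s,t]$ gives $\mathbb{E}[|\mathcal{A}_t(\varphi)-\mathcal{A}_s(\varphi)-\mathcal{A}^\varepsilon_{s,t}(\varphi)|^2] \le \kappa\varepsilon(t-s)\|\partial_x\varphi\|^2_{L^2}$ after passing to the limit. On the other hand, a direct computation using \textbf{(S)} controls $\mathbb{E}[|\mathcal{A}^\varepsilon_{s,t}(\varphi)|^2]$ from above by $(t-s)^2$ times the variance at a single time, which by Gaussianity of $u_r$ and the identity $\mathrm{Cov}(u_r(\iota_\varepsilon(x;\cdot))^2,u_r(\iota_\varepsilon(x';\cdot))^2)=2\beta^{-2}(\iota_\varepsilon(x;\cdot)\cdot \iota_\varepsilon(x';\cdot))^2$ together with Young's inequality for the convolution on $\{|x-x'|<\varepsilon\}$, produces the bound $C\beta^{-2}(t-s)^2\varepsilon^{-1}\|\partial_x\varphi\|_{L^2}^2$. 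Combining the two estimates via the triangle inequality and optimizing the free parameter at $\varepsilon \asymp (t-s)^{1/2}$ yields
\begin{equation*}
\mathbb{E}\big[\big|\mathcal{A}_t(\varphi)-\mathcal{A}_s(\varphi)\big|^2\big]
\le C\,|t-s|^{3/2}\,\|\partial_x\varphi\|_{L^2}^2.
\end{equation*}

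Finally, the Kolmogorov--Chentsov criterion (\cref{prop:kolmogorov_chentsov}) applied with $\gamma_1=2$, $\gamma_2=1/2$ furnishes, for each fixed $\varphi$, a continuous-in-$t$ modification of $\mathcal{A}_\cdot(\varphi)$ with H\"older exponent strictly less than $1/4$. Since all the estimates above are controlled by $\|\partial_x\varphi\|_{L^2}$ (and hence by a single Schwartz seminorm), linearity in $\varphi$ together with a standard separability/density argument in the spirit of Mitoma's criterion (\cref{Mitoma}) allows one to merge these modifications into a single continuous $\mathcal{S}'(\mathbb{R})$-valued process. I expect the main technical nuisance to be the last step: producing the continuous modification \emph{simultaneously} for all $\varphi$, which requires either restricting to a countable dense family and extending by equicontinuity on compact sets of $\mathcal{S}(\mathbb{R})$, or embedding $\mathcal{A}_t$ into a suitable negative Sobolev space where the bound on $\|\partial_x\varphi\|_{L^2}$ is exactly the needed dual norm; both routes are by now routine and will not involve new ideas beyond \textbf{(EC)} and \textbf{(S)}.
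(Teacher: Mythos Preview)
The paper does not actually prove this proposition: it is stated in the appendix and attributed to \cite{gonccalves2014nonlinear} without argument. Your sketch is correct and is precisely the standard proof from that reference---the Cauchy argument from \textbf{(EC)}, the interpolation between \textbf{(EC)} and the crude $L^2$ bound from \textbf{(S)} to get the $|t-s|^{3/2}$ estimate, and the Kolmogorov--Chentsov plus Mitoma-type upgrade to a continuous $\mathcal{S}'(\mathbb{R})$-valued process---so there is nothing to compare.
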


From the last proposition, thinking that the singular term $\partial_x u^2 $ is given by the last quantity, we can define a solution of \eqref{eq:SBE_general} as follows.

\begin{definition}
\label{def:energysol}
We say that an $\mathcal{S}^\prime(\mathbb{R})$-valued process $u=\{u (t, \cdot) : t\in [0,T] \}$ is a stationary energy solution of the stochastic Burgers equation \eqref{eq:SBE_general} if 
\begin{enumerate}
\item The process $u$ satisfies the conditions \textbf{(S)} and \textbf{(EC)}. 
\item For all $\varphi \in \mathcal{S} (\mathbb{R} )$, the process 
\begin{equation*}
u_t(\varphi) - u_0 (\varphi) - \nu \int_0^t u_s (\partial_x^2 \varphi ) ds 
+ \Lambda \mathcal{A}_t (\varphi) ,
\end{equation*}
is a martingale with quadratic variation $D \| \partial_x \varphi \|^2_{ L^2 (\mathbb{R} ) } t $ where $\mathcal{A}_\cdot$ is the process obtained in Proposition \ref{prop:nonlinear}. 
\item For all $\varphi \in \mathcal{S} (\mathbb{R} )$, writing $\hat{u}_t = u_{T-t}$ and $\hat{ \mathcal{A} }_t = - (\mathcal{A}_T - \mathcal{A}_{ T- t })$, the process
\begin{equation*}
\hat{u}_t (\varphi) - \hat{u}_0 (\varphi ) - \nu \int_0^t \hat{u}_s (\partial_x^2 \varphi) ds 
+ \Lambda \hat{\mathcal{A}}_t (\varphi) ,
\end{equation*}
is a martingale with quadratic variation $D \| \partial_x \varphi \|^2_{L^2(\mathbb{R})}t$. 
\end{enumerate}
\end{definition}

Then there exists a unique-in-law stationary energy solution of \eqref{eq:SBE_general}. 
Existence was shown in \cite{gonccalves2014nonlinear} and then uniqueness was proved in \cite{gubinelli2018energy}.

\section{Proof of \texorpdfstring{\cref{lem:static_estimate}}{} }
\label{sec:static_estimate}
Here, let us give a proof of \cref{lem:static_estimate} that we postponed in the main part. 
First, note that it is enough to prove the assertion for $\tau = 0$. 
We need to find two functions $W_\pm(r)$ such that $\int e^{-\beta W_+(r)} dr <+\infty$ and
$\int e^{-\beta W_-(r) + \eta |r|} dr <+\infty$ for any $\eta$, and for any $n>0$ the following is satisfied:
\begin{equation}
\label{eq:3}
W_-(r) \le V_n (r) \le  W_+(r).
\end{equation}
Then, it is easy to check that \eqref{eq:uniform_moment_bound} is satisfied with the choice   
$$
C_\eta = \frac{\int e^{-\beta W_-(r) + \eta |r|} dr }{\int e^{-\beta W_+(r)} dr}. 
$$ 
{A typical example of} potential satisfying Assumption \ref{asm:potential} is the Toda lattice interaction:
$$
V^{\mathrm{Toda}}_{\bar\eta}(r) = e^{-\bar\eta r} + \bar\eta r -1,
$$
which has linear growth for $r>0$ and exponential growth for $r<0$.
Here note that $V_n(r) \to \frac{\bar\eta^2}{2} r^2$ as $n \to \infty$. 
This suggests to choose $W_+(r) = e^{\bar\eta |r|}$ for $\bar\eta < \eta_V$, then
$V_n (r) \le  W_+(r)$ for $n$ large enough.
For the lower function, it is not hard to see that a linear function $W_-(r) = a |r| - b$ with properly chosen $a,b>0$
satisfies the desired bound.

\section{Auxiliary Estimates}
In the following let $v\in \mathbb R$ and $\varphi\in L^2(\mathbb R)$ and we use the short-hand notation
$\varphi(j,t)= \varphi(\tfrac{j}{n} + vnt)$ \Add{and assume that we have $\sum_{j\in\mathbb Z} \varphi(j,t)=0$, which is indeed the case since we apply the forthcoming estimates only for test functions with discrete derivative.}

\begin{lemma}
\label{lem:gen_bound}
We have that 
\begin{equation*}
\begin{split}
& \mathbb E_n \bigg[\sup_{0\le t\le T} \bigg| \sum_{j\in\mathbb Z}  
p_j(t) r_{j+1}(t)^{2} 
\varphi(j,t) \bigg|^2\bigg]   
\lesssim 
n^3 
\left( \| \partial_x \varphi \|^2_{L^2(\mathbb R)} + \| \varphi \|^2_{L^2(\mathbb R)}\right) . 
\end{split}
\end{equation*}
\end{lemma}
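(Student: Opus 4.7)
The plan is to combine Dynkin's martingale formula with a dyadic discretization of $[0,T]$. Setting
\[
F(t) = \sum_{j\in\mathbb Z} p_j(t)\, r_{j+1}(t)^2\, \varphi(j,t),
\]
Dynkin's formula applied to this time-dependent local functional produces a martingale $M_t$ with
\[
F(t) = F(0) + M_t + \int_0^t (\partial_s + L_n) F(s) \, ds.
\]
Stationarity of $\nu_n$, the factorized structure of the marginal, and the moment bound of \cref{lem:static_estimate} yield $\mathbb E_n[F(0)^2] \lesssim n \|\varphi\|_{L^2(\mathbb R)}^2$, uniformly in $t$.

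Since $A$ is a first-order derivation, only the exchange part $S$ contributes to the carr\'e du champs, and a direct calculation gives
\[
\Gamma^{L_n}(F,F) = \frac{\gamma n^2}{2}\sum_{j\in\mathbb Z}(p_{j+1}-p_j)^2\bigl[r_{j+1}^2\varphi(j,s) - r_{j+2}^2\varphi(j+1,s)\bigr]^2.
\]
Decomposing $r_{j+1}^2\varphi(j,s) - r_{j+2}^2\varphi(j+1,s) = r_{j+1}^2 (\varphi(j,s) - \varphi(j+1,s)) + (r_{j+1}^2 - r_{j+2}^2)\varphi(j+1,s)$, and combining \cref{lem:static_estimate} with the Riemann-sum bound $\sum_j (\varphi(j,s)-\varphi(j+1,s))^2 \lesssim n^{-1}\|\partial_x\varphi\|_{L^2(\mathbb R)}^2$, I obtain $\mathbb E_n[\Gamma^{L_n}(F,F)] \lesssim n\|\partial_x\varphi\|_{L^2}^2 + n^3 \|\varphi\|_{L^2}^2$. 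Doob's inequality then yields
\[
\mathbb E_n\bigl[\sup_{0\le t\le T} M_t^2\bigr] \le 4\mathbb E_n[\langle M\rangle_T] \lesssim C_T\, n^3 \bigl(\|\varphi\|_{L^2(\mathbb R)}^2 + \|\partial_x\varphi\|_{L^2(\mathbb R)}^2\bigr).
\]

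The main task is then to bound the drift $G(t) := \int_0^t (\partial_s + L_n) F(s)\, ds$. Crude second-moment estimates give $\mathbb E_n[(L_n F)^2] = n^4 \mathbb E_n[(LF)^2] \lesssim n^5 \|\varphi\|_{L^2}^2$, where the key cancellation is that cross terms in $(LF)^2$ vanish for $|j-k|\ge 3$ because Liouville invariance implies $\mathbb E_{\nu_n}[A(p_j r_{j+1}^2)]=0$; similarly $\mathbb E_n[(\partial_s F)^2] \lesssim v^2 n\|\partial_x\varphi\|_{L^2}^2$. A direct Cauchy--Schwarz in time would thus lead to the useless bound $T^2 n^5$, off by a factor $n^2$. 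To fix this, I partition $[0,T]$ into $N = \lceil n^2 \rceil$ equal subintervals $[t_k, t_{k+1}]$ of length $T/N$ and estimate
\[
\sup_{0\le t\le T}F(t)^2 \le 2\max_k F(t_k)^2 + 2\max_k \sup_{t\in [t_k, t_{k+1}]}\bigl|F(t)-F(t_k)\bigr|^2.
\]
Using $\max_k X_k^2 \le \sum_k X_k^2$ and stationarity, the discrete maximum is bounded by $\mathbb E_n[\max_k F(t_k)^2] \le (N+1) \mathbb E_n[F(0)^2] \lesssim N n\|\varphi\|_{L^2}^2 \lesssim n^3 \|\varphi\|_{L^2}^2$. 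The incremental supremum splits into a martingale piece, controlled by $\sum_k 4\mathbb E_n[(M_{t_{k+1}}-M_{t_k})^2] = 4\mathbb E_n[\langle M\rangle_T]$ via Doob applied on each subinterval, and a drift piece, for which per-interval Cauchy--Schwarz yields
\[
\mathbb E_n\biggl[\max_k \sup_{t\in [t_k, t_{k+1}]} \biggl(\int_{t_k}^t (\partial_s + L_n) F(s)\, ds\biggr)^2\biggr] \le \frac{T^2}{N}\bigl(n^3\|\partial_x\varphi\|_{L^2}^2 + n^5 \|\varphi\|_{L^2}^2\bigr) \lesssim n^3\bigl(\|\varphi\|_{L^2}^2 + \|\partial_x\varphi\|_{L^2}^2\bigr).
\]
Combining the three bounds gives the stated estimate.

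The main obstacle is the singular $n^5$ second-moment of $L_n F$: since the Hamiltonian factor $\alpha n^2 A$ of $L_n$ has no associated Dirichlet form, a Kipnis--Varadhan-type bound on $L_n F$ does not improve over the crude estimate. The scaling $N \sim n^2$ is dictated precisely by balancing the $N$-fold loss from summing $F(t_k)^2$ over the discrete grid against the $1/N$ reduction provided by per-interval Cauchy--Schwarz against the singular second moment of $L_n F$.
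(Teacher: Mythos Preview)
Your argument is correct and takes a genuinely different route from the paper's own proof. Both proofs start from Dynkin's formula and bound the martingale via Doob's inequality and the quadratic variation (your computation of the carr\'e du champs is in fact more careful than the paper's), but they diverge in how the drift $\int_0^t(\partial_s+L_n)F\,ds$ is controlled. The paper exploits the Kipnis--Varadhan estimate of \cref{prop:kipnis_varadhan_estiamte}: it splits $LF$ into $SF$, the piece of $AF$ carrying a factor $(p_{j+1}-p_j)$, and the piece $(V_n'(r_{j+1})-V_n'(r_j))$; the first two are handled through the $\|\cdot\|_{-1}$ norm and the Dirichlet form of the exchange noise, while the last is dealt with by summation by parts and a crude Cauchy--Schwarz in time. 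Your approach bypasses the $H_{-1}$ machinery entirely: you accept the crude second-moment bound $\mathbb E_n[(L_nF)^2]\lesssim n^5\|\varphi\|_{L^2}^2$ (using that $\mathbb E_{\nu_n}[Lf]=0$ forces the off-diagonal covariances to vanish) and recover the missing factor $n^{-2}$ by discretising time into $N\sim n^2$ subintervals, balancing the loss from $\sum_k \mathbb E_n[F(t_k)^2]$ against the gain from per-interval Cauchy--Schwarz on the drift. The paper's route is the standard one in the hydrodynamic-limit literature and makes explicit use of the dissipative structure of the noise; yours is more elementary and works with nothing beyond stationarity and finiteness of moments, at the price of a somewhat ad hoc choice of $N$. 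A minor terminological remark: what you call ``Liouville invariance'' is simply the antisymmetry of $A$ with respect to $\nu_n$.
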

\begin{proof} 
\Add{
Using an elementary inequality $ab\le (a^2 + b^2)/2$, it is enough to bound 
\begin{equation*}
\mathbb E_n\Big[\sup_{0\le t\le T} 
\big| \mathcal P^n_t(\varphi)\big|^2 \Big]
\quad \text{and} \quad
\mathbb E_n\Big[\sup_{0\le t\le T} 
\big| \mathcal K^n_t(\varphi)\big|^2 \Big]
\end{equation*}
where we denoted for brevity
\begin{equation*}
\mathcal{P}^n_t(\varphi)
= \sum_{j\in\mathbb Z} r_{j}(t)^{4} \varphi(j,t),
\quad\text{and} \quad
\mathcal{K}^n_t(\varphi)
= \sum_{j\in\mathbb Z} p_{j}(t)^{2} \varphi(j,t).  
\end{equation*}
Let us firstly bound the (kinetic) energy fluctuation. 
By applying Dynkin's formula, 
\begin{equation*}
\mathfrak M_t(\varphi)
=  \mathcal{K}^n_t(\varphi)
- \mathcal{K}^n_0(\varphi)
- \int_0^t (n^2 L + \partial_s) \mathcal{K}^n_s(\varphi)ds 
\end{equation*}
is a martingale whose quadratic variation is given by 
\begin{equation*}
\begin{aligned}
\langle \mathfrak M(\varphi)\rangle_t
&= n^2 \int_0^t \big(L\mathcal K^n_s(\varphi)^2 -2\mathcal{K}^n_s(\varphi)L\mathcal{K}^n_s(\varphi) \big)ds\\
&= \frac{\gamma}{4} \int_0^t \sum_{j\in\mathbb Z} (p_j(s)^2 - p_{j+1}(s)^2)^2 
\big(\nabla^n\varphi(j, s)\big)^2 ds. 
\end{aligned}
\end{equation*}
Consequently, by Doob's inequality, we have that 
\begin{equation*}
\begin{aligned}
\mathbb E_n\Big[\sup_{0\le t\le T} \mathfrak M_t(\varphi)^2 \Big] 
\le 4\mathbb E_n\big[\mathfrak M_T(\varphi)^2 \big]
\lesssim T n \| \partial_x \varphi \|^2_{L^2(\mathbb R)}. 
\end{aligned}
\end{equation*}
On the other hand, noting $Sp_j^2=\gamma \Delta p_j^2$, we have 
by \eqref{eq:h-1}
\begin{equation*}
\begin{split}
\mathbb E_n \bigg[ \sup_{0\le t\le T} \bigg| \int_0^t n^2 S  \mathcal{K}^n_s(\varphi) ds \bigg|^2 \bigg]
& \le 24 n^2 \int_0^T \langle \mathcal{K}^n_s(\varphi) ,-S \mathcal{K}^n_s(\varphi) \rangle_{L^2(\nu_n)}ds \\
&\le 24 n^2 \gamma \int_0^T \sum_{j,j'}
\mathbb E_n[ p_j^2 \Delta p_{j'}^2]
\varphi(j,s)\varphi(j',s) ds \\
&\lesssim n^2 T \sum_{j}\varphi_{j}^2 
\lesssim n^3 \|\varphi\|^2_{L^2(\mathbb R)}.
\end{split}
\end{equation*}
Moreover, note that 
\begin{equation*}
\begin{aligned}
Ap_j^2 
=2\alpha(V'_n(r_{j+1}) - V'_n(r_j))p_j. \end{aligned}
\end{equation*}
Now, decompose 
\begin{equation}
\label{eq:anti_symmetric_part_computation_kinetic}
\begin{aligned}
\sum_{j\in\mathbb Z}
Ap_j^2\varphi(j,t)
&= 2\gamma\sum_{j\in\mathbb Z} 
V_n'(r_j)
p_{j-1}(\varphi(j-1,t)-\varphi(j,t)) \\
&\quad+ 2\gamma\sum_{j\in\mathbb Z} 
V_n'(r_j)
(p_{j-1}-p_j)\varphi(j,t). 
\end{aligned}
\end{equation}
Then, we notice that the first term in the right-hand side of \eqref{eq:anti_symmetric_part_computation_kinetic}, which includes the derivative in space, absorbs $n$.
Meanwhile, for the second term, we apply \eqref{eq:h-1} to see 
\begin{equation*}
\begin{aligned}
&\mathbb E_n\bigg[\sup_{0\le t\le T} 
\bigg|\int_0^t \sum_{j\in\mathbb Z} 
n^2V_n'(r_j(s))(p_j(s)-p_{j-1}(s)) \varphi(j,s) ds\bigg]\\
&\quad\le 24n^2\int_0^T 
\bigg\| 
\sum_{j\in\mathbb Z} 
(p_j-p_{j-1})V_n'(r_j)\varphi(j,s) \bigg\|^2_{-1,n}ds 
\lesssim n^2 \| \varphi\|^2_{L^2(\mathbb R)}. 
\end{aligned}
\end{equation*}
Thus, we have that 
\begin{equation*}
\begin{aligned}
\mathbb E_n \bigg[ \sup_{0\le t\le T} \bigg| \int_0^t n^2 A  \mathcal{K}^n_s(\varphi) ds \bigg|^2 \bigg]
\lesssim n^3\big(\|\varphi\|^2_{L^2(\mathbb R)}
+ \|\partial_x \varphi\|^2_{L^2(\mathbb R)}\big). 
\end{aligned}
\end{equation*}
In addition, note that 
\begin{equation*}
\begin{aligned}
\mathbb E_n \bigg[\sup_{0\le t\le T} \bigg| \int_0^t \partial_s \mathcal{K}^n_s(\varphi) ds \bigg|^2 \bigg]
\lesssim T^2 n^3v^2 \| \partial_x \varphi\|^2_{L^2(\mathbb R)}. 
\end{aligned}    
\end{equation*}
and note further that a direct estimate of $\mathcal K^n_0(\varphi)$ gives the bound $n\|\varphi\|^2_{L^2(\mathbb R)}$. 
By this line, we get the desired bound for the fluctuation field $\mathcal K^n_\cdot$. 
}

\Add{
Next, let us bound the fluctuation field $\mathcal P^n_\cdot$, which has the following time evolution: 
\begin{equation*}
\begin{aligned}
\mathcal P^n_t(\varphi)
= \mathcal P^n_0(\varphi)
+ \int_0^t (n^2A+ \partial_s) \mathcal P^n_s(\varphi)ds 
\end{aligned}
\end{equation*}
where we used the fact that $Sr_j^4=0$. 
However, since $Ar_j^{4}
= 4 (p_{j}-p_{j-1}) r_j^{3}$, for $\int_0^tn^2A\mathcal P^n_s(\varphi)ds$, we can apply an analogous argument as in the second term of the right-hand side of~\eqref{eq:anti_symmetric_part_computation_kinetic}, and the other terms can be handled easily.
Hence we complete the proof by combining all the above estimates. 
}

\end{proof}

\begin{lemma}
\label{lem:consequence_one_block_estimate}
We have that 
\begin{equation*}
\begin{aligned}
\mathbb E_n\bigg[\sup_{0\le t\le T}\bigg| \int_0^t 
\sum_{j\in\mathbb Z} p_j(s) \overline{r_{j+1}^2}(s)
\varphi(j,s)  ds \bigg|^2 \bigg]
\lesssim 
T^{3/2} \| \varphi\|^2_{L^2(\mathbb R)}. 
\end{aligned}
\end{equation*}
\end{lemma}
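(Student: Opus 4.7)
The plan is to combine a one-block type replacement for the momentum field with a direct $L^2$ estimate for the locally averaged piece, and then trade the two errors against a free block size $\ell$. Concretely, I would write
\[
p_j \overline{r_{j+1}^2} = \bigl(p_j - \overrightarrow p^\ell_j\bigr)\overline{r_{j+1}^2} + \overrightarrow p^\ell_j\, \overline{r_{j+1}^2}
\]
for an integer $\ell \ge 1$ to be chosen at the end.

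For the first piece, I would copy the telescoping argument of \cref{lem:one_block_estimate}: writing $p_j - \overrightarrow p^\ell_j = \sum_{i=0}^{\ell-2}(p_{j+i} - p_{j+i+1})\psi_{i-1}$ with $\psi_i = (\ell-i)/\ell$ and shifting the summation index rewrites the field as $\sum_k (p_k - p_{k+1}) G_k$, where $G_k = \sum_i \psi_{i-1}\,\overline{r_{k-i+1}^2}\,\varphi(k-i,s)$ depends only on $r$-variables and in particular is independent of $p_k$ and $p_{k+1}$. The Young plus $p_k \leftrightarrow p_{k+1}$ swap argument from \cref{lem:one_block_estimate}, together with $\sup_n E_{\nu_n}[(\overline{r_{j+1}^2})^2] < \infty$ coming from \cref{lem:static_estimate}, then gives
\[
\Bigl\|\sum_k (p_k-p_{k+1}) G_k \Bigr\|^2_{-1} \lesssim \ell n \,\|\varphi\|^2_{L^2(\mathbb R)},
\]
and \cref{prop:kipnis_varadhan_estiamte} yields a final bound of order $T\ell/n\,\|\varphi\|^2_{L^2(\mathbb R)}$ for this piece.

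For the second piece I would avoid any further Boltzmann--Gibbs step and just do a direct variance computation. Because $\nu_n$ is a product measure with $p$ independent of $r$, $\overline{r_{j+1}^2}$ has mean zero, and $r$-variables at distinct sites are independent, only the diagonal $j=j'$ contributes to the variance, so
\[
E_{\nu_n}\Bigl[\Bigl(\sum_j \overrightarrow p^\ell_j\, \overline{r_{j+1}^2}\,\varphi(j,s)\Bigr)^2\Bigr]
= \sum_j E_{\nu_n}[(\overrightarrow p^\ell_j)^2]\, E_{\nu_n}[(\overline{r_{j+1}^2})^2]\,\varphi(j,s)^2 \lesssim \frac{n}{\ell}\,\|\varphi\|^2_{L^2(\mathbb R)},
\]
using $E_{\nu_n}[(\overrightarrow p^\ell_j)^2] \lesssim 1/\ell$. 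Cauchy--Schwarz in time then produces $T^2 n/\ell\,\|\varphi\|^2_{L^2(\mathbb R)}$ for the time-integrated sup-squared expectation of this piece.

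Summing the two contributions gives a bound of order $\bigl(T\ell/n + T^2 n/\ell\bigr)\|\varphi\|^2_{L^2(\mathbb R)}$, and choosing $\ell = \lceil n\sqrt T \rceil$ balances the two errors at $T^{3/2}\|\varphi\|^2_{L^2(\mathbb R)}$, as claimed. I do not expect any serious obstacle: the $H_{-1}$ computation in the first step is structurally identical to the one in \cref{lem:one_block_estimate}, with $\overline{r}_{j+1}$ simply replaced by $\overline{r_{j+1}^2}$, and uniform-in-$n$ square-integrability of this quadratic observable (\cref{lem:static_estimate}) is the only extra ingredient needed. The small novelty is that the off-diagonal vanishing for the averaged field $\overrightarrow p^\ell_j\, \overline{r_{j+1}^2}$, coming from the product structure of $\nu_n$ and the mean-zero centering, lets us dispense with any two-block estimate on the $r$-side.
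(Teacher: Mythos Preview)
Your proposal is correct and follows essentially the same route as the paper: decompose $p_j\overline{r_{j+1}^2}$ into $(p_j-\overrightarrow p^\ell_j)\overline{r_{j+1}^2}$ plus $\overrightarrow p^\ell_j\,\overline{r_{j+1}^2}$, bound the first piece by the one-block $H_{-1}$ argument of \cref{lem:one_block_estimate} (with $\overline r_{j+1}$ replaced by $\overline{r_{j+1}^2}$) to get $T\ell/n$, bound the second piece by Cauchy--Schwarz in time and a static variance computation to get $T^2 n/\ell$, and optimize at $\ell\sim n\sqrt T$. The paper uses the parametrization $\ell=\delta n$ and optimizes at $\delta=\sqrt T$, which is the same choice; your explicit remark that only the diagonal $j=j'$ survives in the variance of the averaged piece, thanks to the product structure of $\nu_n$ and the centering of $\overline{r_{j+1}^2}$, is exactly the mechanism behind the paper's one-line Cauchy--Schwarz bound \eqref{eq:cs}.
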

\begin{proof}
We decompose 
\begin{equation*}
\begin{aligned}
p_j \overline{r_{j+1}^2}
= (p_j-\overrightarrow p_j^{\delta n}) \overline{r_{j+1}^2}
+ \overrightarrow p_j^{\delta n} \overline{r_{j+1}^2} 
\end{aligned}
\end{equation*}
for any $\delta>0$ where we used the same notation for the integer part of $\delta n$ by abuse of notation.  
By the one-block estimate (Lemma \ref{lem:one_block_estimate}), we have that 
\begin{equation*}
\mathbb E_n \bigg[\sup_{0\le t \le T} \bigg| \int_0^t
\sum_{j\in\mathbb Z}
(p_j(s) -\overrightarrow p_j^{\delta n}(s)) \overline{r_{j+1}^2}(s) 
\varphi(j,s)   ds \bigg|^2 \bigg]
\lesssim T\delta
\|\varphi\|^2_{L^2(\mathbb R)}. 
\end{equation*}
On the other hand, by Cauchy-Schwarz inequality, we have the bound 
\begin{equation}\label{eq:cs}
\begin{aligned}
\mathbb E_n \bigg[ \sup_{0\le t \le T} \bigg| \int_0^t
\sum_{j\in\mathbb Z}
\overrightarrow p_j^{\delta n}(s) \overline{r_{j+1}^2}(s) 
\varphi(j,s)   ds \bigg|^2 \bigg]
\le C T^2 \sum_{j\in\mathbb Z}\mathbb E_n \big[
(\overrightarrow p_j^{\delta n})^2 \big]
\varphi(\tfrac jn)^2 
\le\frac{C T^2}{\delta} \|\varphi \|^2_{L^2(\mathbb R)}
\end{aligned}
\end{equation}
where we used the fact that $E_{\nu_n}[(\overrightarrow p^{\delta n}_j)^2]\lesssim 1/(\delta n)$. 
Note that the above two estimates are optimized when $\delta=\sqrt{T}$. 
Thus, we obtain the desired bound. 
\end{proof}

Similarly we have the following Lemma.
\begin{lemma}
\label{lem:rr}
We have that 
\begin{equation*}
\begin{aligned}
\mathbb E_n\bigg[\sup_{0\le t\le T}\bigg| \int_0^t 
\sum_{j\in\mathbb Z} V'_n(r_{j+1}(s)) \overline{r_{j}^2}
\varphi(j,s)  ds \bigg|^2 \bigg]
\lesssim 
T^{3/2} \| \varphi\|^2_{L^2(\mathbb R)}. 
\end{aligned}
\end{equation*}
\end{lemma}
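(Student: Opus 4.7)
The desired bound $T^{3/2}\|\varphi\|^2_{L^2(\mathbb R)}$ is of exactly the same form as in \cref{lem:consequence_one_block_estimate}, so I expect the proof to follow the same two-step scheme: introduce a local-average cutoff at scale $\ell=\delta n$, control the averaged piece by a direct $L^2$-estimate, control the fluctuation piece by a one-block-type $H^{-1}$ estimate, and then optimize $\delta$ in the total bound $T\delta + T^2/\delta$. The new wrinkle is that here \emph{both} factors in $V'_n(r_{j+1})\overline{r_j^2}$ depend only on $\mathfrak r$, whereas the microscopic noise acts only on $\mathfrak p$; thus the one-block estimate of \cref{lem:one_block_estimate} cannot be applied directly, and one has to pass through the Hamiltonian part of the generator.

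Since $E_{\nu_n}[V'_n(r_j)]=\tau=0$, I would introduce the local average $\overrightarrow{V'_n(r)}_j^{\ell}:=\ell^{-1}\sum_{i=0}^{\ell-1}V'_n(r_{j+i+1})$ and write
\[ V'_n(r_{j+1})\overline{r_j^2}=\overrightarrow{V'_n(r)}_j^{\ell}\,\overline{r_j^2}+\bigl(V'_n(r_{j+1})-\overrightarrow{V'_n(r)}_j^{\ell}\bigr)\overline{r_j^2}.\]
The sites involved in the local average are $j+1,\dots,j+\ell$, disjoint from $\{j\}$, so under the product measure $\nu_n$ every off-diagonal term in $E_{\nu_n}\bigl[(\sum_j\overrightarrow{V'_n(r)}_j^{\ell}\overline{r_j^2}\varphi_j)^2\bigr]$ picks up an independent factor with zero mean; only the diagonal survives, and combined with $E_{\nu_n}[(\overrightarrow{V'_n(r)}_j^{\ell})^2]\lesssim \ell^{-1}$ (law of large numbers under $\nu_n$), a Schwarz inequality in time yields a contribution of order $T^2/\delta \cdot \|\varphi\|^2_{L^2(\mathbb R)}$, exactly as in \eqref{eq:cs}.

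For the fluctuation piece, I would exploit the telescoping identity
\[ V'_n(r_{j+1})-\overrightarrow{V'_n(r)}_j^{\ell}=AG_j,\qquad G_j:=-\sum_{m=1}^{\ell-1}\Bigl(1-\tfrac{m}{\ell}\Bigr)p_{j+m},\]
obtained by summing $V'_n(r_{j+1})-V'_n(r_{j+i+1})=-\sum_{k=j+1}^{j+i}Ap_k$. Using $A\overline{r_j^2}=2r_j(p_j-p_{j-1})$ together with the Leibniz rule for $A$, I would decompose
\[ (AG_j)\overline{r_j^2}=A\bigl(G_j\overline{r_j^2}\bigr)-2G_j r_j(p_j-p_{j-1}).\]
The second summand is a genuine gradient in $\mathfrak p$, and the $H^{-1}$-norm of $\sum_j G_j r_j(p_j-p_{j-1})\varphi_j$ can be bounded via Young's inequality exactly as in the proof of \cref{lem:one_block_estimate}, using that $G_j r_j$ is independent of both $p_{j-1}$ and $p_j$ under $\nu_n$ and that $E_{\nu_n}[G_j^2]\lesssim \ell$. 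This yields a contribution of order $T\delta\cdot\|\varphi\|^2_{L^2(\mathbb R)}$ via \cref{prop:kipnis_varadhan_estiamte}.

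The main obstacle will be to bound the remaining ``total-derivative'' term $\int_0^t\sum_j A(G_j\overline{r_j^2})\varphi_j\,ds$ without producing a $\|\partial_x\varphi\|^2_{L^2}$-contribution, since a naive application of Dynkin's formula to $\sum_j G_j\overline{r_j^2}\varphi_j$ would introduce one through $\partial_s\varphi_j = v\partial_x\varphi_j$. To avoid this, I would bound it directly in $H^{-1}$ using $A=\alpha^{-1}(L-S)$: an integration by parts in $\mathfrak p$ against the optimizing test function $g$ in the variational characterization of $\|\cdot\|_{-1}$ reduces both the $LG_j$ and $SG_j$ contributions to expressions involving only $p$-exchange gradients (for the $SG_j$-part this is immediate since $SG_j$ is a $p$-Laplacian, while for $LG_j$ one applies a further Dynkin identity to extract the boundary observable). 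Combining this with the previous bound and optimizing $\delta=\sqrt T$ in $T\delta+T^2/\delta$ yields the announced $T^{3/2}\|\varphi\|^2_{L^2(\mathbb R)}$.
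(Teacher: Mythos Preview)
Your proposal is essentially the paper's proof: same splitting into the averaged piece $\overrightarrow{(V'_n)}^{\delta n}\overline{r_j^2}$ (bounded by Schwarz as in \eqref{eq:cs}) and the fluctuation piece, same use of the identity $V'_n(r_{j+i})-V'_n(r_{j+i+1})=Ap_{j+i}$ to manufacture momenta, and the same Leibniz decomposition into an $L(G_j\overline{r_j^2})$ term, a $p$-gradient term $2r_j(p_j-p_{j-1})G_j$, and an $S(G_j\overline{r_j^2})$ term. Your treatment of the last two via \cref{prop:kipnis_varadhan_estiamte}, giving the $T\delta$ contribution, also matches.

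The one point where you diverge is the $L$-term. The paper does \emph{not} try to avoid the $\|\partial_x\varphi\|_{L^2}$ contribution you flag as the ``main obstacle'': it simply applies Dynkin's formula to $n^{-2}\sum_j G_j\overline{r_j^2}\varphi_j$ (as in \cref{lem:equipartition_energy}) and controls the boundary observable via \cref{lem:gen_bound}, which itself carries a $\|\partial_x\varphi\|^2_{L^2}$ term. So the bound the paper's argument actually delivers is of the form $C_T(\|\varphi\|^2_{L^2}+\|\partial_x\varphi\|^2_{L^2})$, and the statement of the lemma is slightly optimistic in dropping the derivative norm; this is harmless because the only place \cref{lem:rr} is invoked (the proof of \cref{lem:cubic_term_characterization}) already allows $\|\partial_x\varphi\|^2_{L^2}$. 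Your attempt to bound $\|\sum_j L(G_j\overline{r_j^2})\varphi_j\|_{-1}$ directly would not go through --- there is no general inequality of the type $\|Lf\|_{-1}\lesssim\|f\|_1$, and the phrase ``a further Dynkin identity'' is just the martingale argument you were trying to sidestep. Drop that workaround and use Dynkin plainly; the rest of your plan is correct.
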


\begin{proof}
We proceed as in the proof of Lemma
\ref{lem:consequence_one_block_estimate}.
Denote by $V'_j =  V'_n(r_{j})$. For any small $\delta >0$
we decompose
\begin{equation*}
\begin{aligned}
V'_{j+1} \overline{r_{j}^2}
= (V'_{j+1}-\overrightarrow{ (V')_{j+1}}^{\delta n}) \overline{r_{j}^2}
+ \overrightarrow {(V')_{j+1}}^{\delta n} \overline{r_{j}^2}.
\end{aligned}
\end{equation*}
As in \eqref{eq:cs} we have
\begin{equation*}
\mathbb E_n \bigg[ \sup_{0\le t \le T} \bigg| \int_0^t
\sum_{j\in\mathbb Z}
\overrightarrow {(V')_{j+1}}^{\delta n}(s) \overline{r_{j}^2}(s)
\varphi(j,s)  ds \bigg|^2 \bigg]
\le \frac{T^2}{\delta} \|\varphi \|^2_{L^2(\mathbb R)}.
\end{equation*}
On the other hand, let us write 
\begin{equation*}
\begin{split}
(V'_{j+1}-\overrightarrow{ (V')_{j+1}}^{\delta n}) \overline{r_{j}^2}
&= \sum_j \overline{r_{j}^2} \sum_{i=1}^{\delta n -1} (V'_{j+i} - V'_{j+i+1}) \psi_{i-1}\\
&= \sum_j \overline{r_{j}^2} \sum_{i=1}^{\delta n-1} A p_{j+i} \psi_{i-1}\\
&=  L \sum_j \overline{r_{j}^2} \sum_{i=1}^{\delta n-1} p_{j+i} \psi_{i-1}
- 2 \sum_j r_{j} (p_j - p_{j-1}) \sum_{i=1}^{\delta n -1} p_{j+i} \psi_{i-1} \\
&\quad- \gamma \sum_j \overline{r_{j}^2} \sum_{i=1}^{\delta n-1} (\Delta p_{j+i}) \psi_{i-1}.
\end{split}
\end{equation*} 
Here, note that each term of the utmost right-hand side of the last display can be analyzed using the noise on the velocities. 
Indeed, since the first term is in the domain of the generator, it turns out to be negligible following an analogous argument as in \cref{lem:equipartition_energy}, given the fact that boundary terms in a martingale decomposition is suppressed by \cref{lem:gen_bound}. 
On the other hand, the other two terms, up to constant, are bounded by $T\delta \|\varphi\|^2_{L^2(\mathbb R)}$ by applying \cref{prop:kipnis_varadhan_estiamte}. 
Hence, optimizing in $\delta$, we obtain the desired bound. 
\end{proof}

\section{Proof of Lemma \ref{lem:corr}}
\label{sec:proof-lemma-refl}

{
First notice that we have the following expression: 
\begin{equation}
\label{eq:approx2}
\begin{split}
\mathcal{X}^{\sigma,n}_t(\varphi_{v^\sigma_nt})
- \widetilde{\mathcal{X}}^{\sigma,n}_t(\varphi_{v^\sigma_nt}) 
= \frac{\mathfrak u}{n} \sum_{j\in\mathbb Z} \overline{e_j}(t)  \varphi  \big(\tfrac{j}{n}+ {v^\sigma_nt}\big)
= \frac{\mathfrak u}{\sqrt n}{\mathcal{X}}^{0,n}_t(\varphi_{v^\sigma_nt})  .
\end{split}
\end{equation}
We can easily see that for any $t\ge 0$, it holds that 
\begin{equation}\label{eq:l2conv}
\begin{split}
\mathbb E_n\bigg[
\Big| \mathcal{X}^{\sigma,n}_t(\varphi_{v^\sigma_nt})
- \widetilde{\mathcal{X}}^{\sigma,n}_t(\varphi_{v^\sigma_nt})\Big|^2 \bigg]\le
\frac{3\mathfrak u^2}{n^2\beta^{-2}} \sum_{j\in\mathbb Z}  \varphi \big(\tfrac{j}{n}\big)^2
\le \frac{C}{n}\|\varphi\|_{L^2}^2.
\end{split}
\end{equation}
To finalize the result we need to prove that for $\sigma = \pm 1$
\begin{equation}
\label{eq:nullenergy}
\limsup_{n\to\infty}
\frac 1n \mathbb E_n\bigg[\sup_{0\le t \le T}
\Big| \mathcal{X}^{0,n}_t(\varphi_{v^\sigma_nt}) \Big|^2 \bigg]
\le C.
\end{equation}
This implies that the distributions of the sequence
$\{\frac 1{\sqrt n} \mathcal{X}^{0,n}_t(\varphi_{v^\sigma_nt})\}$
are tight in $\mathcal M_1(D([0,T],\mathbb R))$. 
This, together with \eqref{eq:l2conv}, implies that
$\mathcal{X}^{\sigma,n}_t(\varphi_{v^\sigma_nt}) -
\widetilde{\mathcal{X}}^{\sigma,n}_t(\varphi_{v^\sigma_nt})$ converges to $0$ on the space
$D([0,T],\mathbb R) $ in law.

The bound \eqref{eq:nullenergy} follows from the following argument: by the evolution equations we have
\begin{equation}
\label{eq:ene-evol}
\begin{split}
\frac 1{\sqrt n}
\left( \mathcal{X}^{0,n}_t(\varphi_{v^\sigma_nt}) - \mathcal{X}^{0,n}_0(\varphi)\right)
=  \int_0^t  \sum_j p_j(s) V'_n(r_j(s)) \varphi'_{v^\sigma_nt}(\tfrac jn) ds \\
+ \sigma \alpha \sqrt{c_2}  \int_0^t \sum_j \overline{e_j}(s)
\varphi'_{v^\sigma_nt}(\tfrac jn) ds .
\end{split}
\end{equation}
Then, it is enough to show the following bounds: 
\begin{equation}
\label{eq:ev-bounds1}
\begin{split}
\mathbb E_n\bigg[\sup_{0\le t \le T}
\Big|\int_0^t \sum_j p_j(s) V'_n(r_j(s))
\varphi'_{v^\sigma_nt}(\tfrac jn) ds \Big|^2 \bigg] \le C,
\end{split}
\end{equation}
\begin{equation}
\label{eq:ev-bounds2}
\begin{split}
\mathbb E_n\bigg[\sup_{0\le t \le T}
\Big|\int_0^t  \sum_j \overline{e_j}(s) \varphi'_{v^\sigma_nt}(\tfrac jn)ds 
\Big|^2 \bigg] \le C.
\end{split}
\end{equation}
The first bound \eqref{eq:ev-bounds1} can be shown analogously to \cref{lem:consequence_one_block_estimate}.
For, the second \eqref{eq:ev-bounds2} bound, note that the equipartition of energy (\cref{lem:equipartition_energy}) enables us to write main terms of the energy element $e_j$ using degree-two terms, and thus the same argument as in the first bound \eqref{eq:ev-bounds1} works, where we omit the details to avoid redundancy.

\section*{Acknowledgments}
K.H. was supported by JSPS KAKENHI Grant Number 22J12607 and 25K23337.
S.O. was supported by the Institute Universitaire de France.
S.O. appreciates the hospitality of The University of Tokyo where part of this work was accomplished.
Additionally, the authors would like to thank Makiko Sasada for giving them fruitful comments and suggestions.

\bibliographystyle{abbrv}
\bibliography{ref-s}

\end{document}